\DeclareMathAlphabet{\mathcal}{OMS}{cmsy}{m}{n} 
\newtheorem{theorem}{Theorem}[section]
\newtheorem{lemma}{Lemma}[section]
\newtheorem{proposition}{Proposition}[section]
\newtheorem{corollary}{Corollary}[section]
\theoremstyle{definition}
\newtheorem{definition}[theorem]{Definition}
\newtheorem{example}[theorem]{Example}
\newtheorem{remark}[theorem]{Remark}
\newcommand{\m}{\mathfrak}
\begin{document}

\pagestyle{plain}
\title{\bf ON REALIZABILITY OF GAUSS DIAGRAMS and CONSTRUCTIONS OF MEANDERS}

\maketitle

\begin{center}
Andrey Grinblat\footnote{\texttt{expandrey@mail.ru}} and Viktor Lopatkin\footnote{\texttt{wickktor@gmail.com} (please use this e-mail for contacting.)}
\end{center}

\begin{abstract}
  The problem of which Gauss diagram can be realized by knots is an old one and has been solved in several ways. In this paper, we present a direct approach to this problem. We show that the needed conditions for realizability of a Gauss diagram can be interpreted as follows ``the number of exits = the number of entrances'' and the sufficient condition is based on Jordan curve Theorem. Further, using matrixes we redefine conditions for realizability of Gauss diagrams and then we give an algorithm to construct meanders.

  \medskip

 \textbf{Mathematics Subject Classifications}: 57M25, 14H50.

\textbf{Key words}: Gauss diagrams; Gauss code; realizability; plane curves.
  \end{abstract}

\section*{Introduction}
In the earliest time of the Knot Theory C.F. Gauss defined the chord diagram (= Gauss diagram). C.F. Gauss \cite{Gauss} observed that if a chord diagram can be realized by a plane curve, then every chord is crossed only by an even number of chords, but that this condition is not sufficient.

The aim of this paper is to present a direct approach to the problem of which Gauss diagram can be realized by knots. This problem is an old one, and has been solved in several ways.

In 1936, M. Dehn \cite{D} found a sufficient algorithmic solution based on the existence of a touch Jordan curve which is the image of a transformation of the knot diagram by successive splits replacing all the crossings. A long time after in 1976, L. Lovasz and M.L. Marx \cite{LM} found a second necessary condition and finally during the same year, R.C. Read and P. Rosenstiehl \cite{RR} found the third condition which allowed the set of these three conditions to be sufficient. The last characterization is based on the tripartition of graphs into cycles, cocycles and bicycles.

In \cite{S1} the notation of oriented chord diagram was introduced and it was showed that these diagrams classify cellular generic curves on oriented surfaces. As a corollary a simple combinatorial classification of plane generic curves was derived, and the problem of realizability of these diagrams was also solved.

However all these ways are indirect; they rest upon deep and nontrivial auxiliary construction. There is a natural question: whether one can arrive at these conditions in a more direct and natural fashion?

We believe that the conditions for realizability of a Gauss diagram (by some plane curve) should be obtained in a natural manner; they should be deduced from an intrinsic structure of the curve.

In this paper, we suggest an approach, which satisfies the above principle. We use the fact that every Gauss diagram $\m{G}$ defines a (virtual) plane curve $\mathscr{C}(\m{G})$ (see \cite[Theorem 1.A]{GPV}), and the following simple ideas:
\begin{itemize}
  \item[(1)] For every chord of a Gauss diagram $\m{G}$, we can associate a closed path along the curve $\mathscr{C}(\m{G})$.
  \item[(2)] For every two non-intersecting chords of a Gauss diagram $\m{G}$, we can associate two closed paths along the curve $\mathscr{C}(\m{G})$ such that every chord crosses both of those chords correspondences to the point of intersection of the paths.
  \item[(3)] If a Gauss diagram $\m{G}$ is realizable (say by a plane curve $\mathscr{C}(\m{G})$), then for every closed path (say) $\mathscr{P}$ along $\mathscr{C}(\m{G})$ we can associate a coloring another part of $\mathscr{C}(\m{G})$ into two colors (roughly speaking we get ``inner'' and ``outer'' sides of $\mathscr{P}$ \textit{cf.} Jordan curve Theorem). If a Gauss diagram is not realizable then (\cite[Theorem 1.A]{GPV}) it defines a virtual plane curve $\mathscr{C}(\m{G})$. We shall show that there exists a closed path along $\mathscr{C}(\m{G})$ for which we cannot associate a well-defined coloring of $\mathscr{C}(\m{G})$, \textit{i.e.,} $\mathscr{C}(\m{G})$ contains a path is colored into two colors.
  \end{itemize}

Using these ideas we solve the problem of which Gauss diagram can be realized by knots.  We then give a matrix approach of realization of Gauss diagrams and then we present an algorithm to construct meanders.

\section{Preliminaries}
Recall that classically, a knot is defined as an embedding of the circle $S^1$ into $\mathbb{R}^3$, or equivalently into the $3$-sphere $S^3$, \textit{i.e}., a knot is a closed curve embedded on $\mathbb{R}^3$ (or $S^3$) without intersecting itself, up to ambient isotopy.

The projection of a knot onto a $2$-manifold is considered with all multiple points are transversal double with will be call {\it crossing points} (or shortly \textit{crossings}). Such a projection is called the {\it shadow} by the knots theorists \cite{A,S2}, following \cite{S1} we shall also call these projections as plane curves. {\it A knot diagram} is a generic immersion of a circle $S^1$ to a plane $\mathbb{R}^2$ enhanced by information on overpasses and underpasses at double points.

\subsection{Gauss Diagrams} A generic immersion of a circle to a plane is characterized by its Gauss diagram \cite{PV}.

\begin{figure}[h!]
  \begin{center}
\begin{tikzpicture}[scale = 4]
 \draw [line width = 2, name path= a] (0.1,0.8) to [out = 0, in = 180] (0.6,0.2);
 \draw[line width =2, name path= b](0.6,0.2) to [out = 0, in = 270] (1.1,0.6);
 \draw[line width =2, name path= c] (1.1,0.6)to [out = 90, in =0] (0.6, 1);
 \draw[line width =2, name path= d](0.6,1) to [out = 180, in = 90] (0.3,0.4);
 \draw[line width =2, name path= e] (0.3,0.4)to [out= 270, in = 180] (0.7,-0.2);
 \draw[line width =2, name path= f](0.7,-0.2)to [out= 0, in = 270] (1.1,0.2);
 \draw[line width =2, name path= g](1.1,0.2)to [out = 90, in =270] (0.5, 1);
 \draw[line width =2, name path= h] (0.5,1)to [out = 90, in = 180] (0.7, 1.3);
 \draw[line width =2, name path= k](0.7,1.3) to [out= 0, in = 90] (1,0.9);
 \draw[line width =2, name path= l](1,0.9)to [out= 270, in = 60] (0.3, 0);
 \draw[line width =2, name path= m](0.3,0) to [out = 240, in = 180] (0.1,0.8);
 \fill [name intersections={of=a and d, by={1}}]
(1) circle (1pt) node[left] {$1$};
 \fill [name intersections={of=a and l, by={2}}]
(2) circle (1pt) node[above =1mm of 2] {$2$};
 \fill [name intersections={of=b and g, by={3}}]
(3) circle (1pt) node[right = 1mm of 3] {$3$};
 \fill [name intersections={of=c and l, by={4}}]
(4) circle (1pt) node[above right] {$4$};
 \fill [name intersections={of=d and g, by={5}}]
(5) circle (1pt) node[above left] {$5$};
 \fill [name intersections={of=e and l, by={6}}]
(6) circle (1pt) node[left = 1mm of 6] {$6$};
 \fill [name intersections={of=g and l, by={7}}]
(7) circle (1pt) node[above = 1mm of 7] {$7$};
 \begin{scope}[scale = 0.3, xshift = 7cm, yshift = 1.6cm, line width=2]
   \draw (0,0) circle (2);
    \coordinate (1) at (90:2);
    \node at (1) [above =1mm of 1] {$1$};
    \fill (1) circle(2pt);
    \coordinate (2) at (115:2);
    \node at (2) [above =1mm of 2] {$2$};
    \fill (2) circle(2pt);
    \coordinate (3) at (140:2);
    \node at (3) [above left = -1mm of 3] {$3$};
    \fill (3) circle(2pt);
    \coordinate (4) at (165:2);
    \node at (4) [left] {$4$};
    \fill (4) circle(2pt);
    \coordinate (5) at (190:2);
    \node at (5) [left] {$5$};
    \fill (5) circle(2pt);
    \coordinate (6) at (215:2);
    \node at (6) [left] {$1$};
    \fill (6) circle(2pt);
    \coordinate (7) at (240:2);
    \node at (7) [below] {$6$};
    \fill (7) circle(2pt);
    \coordinate (8) at (265:2);
    \node at (8) [below] {$3$};
    \fill (8) circle(2pt);
    \coordinate (9) at (290:2);
    \node at (9) [below] {$7$};
    \fill (9) circle(2pt);
    \coordinate (10) at (315:2);
    \node at (10) [below right] {$5$};
    \fill (10) circle(2pt);
    \coordinate (11) at (340:2);
    \node at (11) [below right] {$4$};
    \fill (11) circle(2pt);
    \coordinate (12) at (5:2);
    \node at (12) [right] {$7$};
    \fill (12) circle(2pt);
    \coordinate (13) at (30:2);
    \node at (13) [right] {$2$};
    \fill (13) circle(2pt);
    \coordinate (14) at (60:2);
    \node at (14) [above right] {$6$};
    \fill (14) circle(2pt);
    \draw[line width =2] (1) -- (6);
    \draw[line width =2] (3) -- (8);
    \draw[line width =2] (5) -- (10);
    \draw[line width =2] (4) -- (11);
    \draw[line width =2] (9) -- (12);
    \draw[line width =2] (2) -- (13);
    \draw[line width =2] (7) -- (14);
   \end{scope}
\end{tikzpicture}
\end{center}
\caption{The plane curve and its Gauss diagram are shown.}\label{exofgauss}
\end{figure}
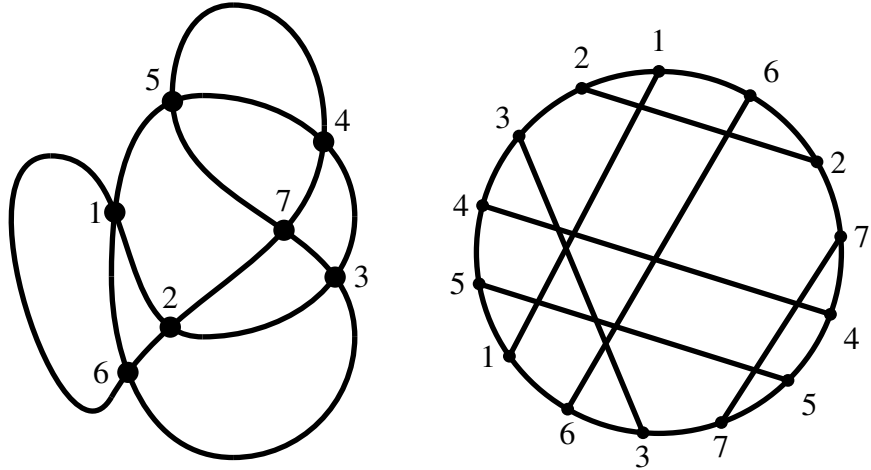

\begin{definition}
 {\it The Gauss diagram} is the immersing circle with the preimages of each double point connected with a chord.
\end{definition}

On the other words, this natation can be defined as follows. Let us walk on a path along the plane curve until returning back to the origin and then generate a word $W$ which is the sequence of the crossings in the order we meet them on the path. $W$ is a double occurrence word. If we put the labels of the crossing on a circle in the order of the word $W$ and if we join by a chord all pairs of identical labels then we obtain a chord diagram (=Gauss diagram) of the plane curve (see {\sc Figure} \ref{exofgauss}).

{\it A virtual knot diagram} \cite{GPV} is a generic immersion of the circle into the plane, with
double points divided into real crossing points and virtual crossing points, with the
real crossing points enhanced by information on overpasses and underpasses (as for
classical knot diagrams). At a virtual crossing the branches are not divided into an
overpass and an underpass. The Gauss diagram of a virtual knot is constructed in
the same way as for a classical knot, but all virtual crossings are disregarded.

\begin{theorem}{\cite[Theorem 1.A]{GPV}}\label{virt}
  A Gauss diagram defines a virtual knot diagram up to virtual moves.
\end{theorem}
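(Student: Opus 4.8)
The plan is to show that a Gauss diagram $\m{G}$ on $n$ chords prescribes enough combinatorial data to reconstruct an immersion of $S^1$ into $\mathbb{R}^2$ (or a surface), where each chord of $\m{G}$ becomes a transversal double point, and that two different choices made along the way differ only by virtual moves. First I would fix, for each chord of $\m{G}$, an overpass/underpass decoration (this plays no role for the underlying curve but is needed to speak of a virtual \emph{knot} diagram); then I would read the cyclic order of chord-endpoints around the circle of $\m{G}$ as a double occurrence word $W = w_1 w_2 \cdots w_{2n}$. The goal is to build a $4$-valent plane graph whose cyclic structure at the $i$-th vertex matches the pairing given by $W$.

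The main construction step: place $2n$ small disjoint arcs (``stubs'') in the plane, one segment for each letter $w_k$ of $W$, arranged so that consecutive letters $w_k, w_{k+1}$ can be joined by an embedded arc in the complement — concretely, lay the $2n$ stub-midpoints on a big circle in cyclic order and connect $w_k$ to $w_{k+1}$ by chords of a slightly inflated circle, which one can always draw disjointly except that \emph{crossings are allowed and declared virtual}. Then at each crossing label $c$ (which occurs exactly twice in $W$, say as $w_j$ and $w_\ell$), glue the two stubs labelled $c$ transversally to create one genuine $4$-valent vertex, decorated as a real crossing by the choice fixed earlier. The result is a generic immersion of $S^1$ into $\mathbb{R}^2$ with $n$ real crossings realizing the pattern of $\m{G}$ and some number of virtual crossings coming from the unavoidable intersections of the connecting arcs; by construction its Gauss diagram (virtual crossings disregarded) is exactly $\m{G}$.

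For the uniqueness-up-to-virtual-moves part, I would argue that any two such completions of the same stub/real-crossing data are related by the detour (virtual Reidemeister) moves: a strand carrying only virtual crossings can be pushed across another strand and across a real crossing, and any two systems of disjoint-except-for-virtual-crossings arcs joining the same cyclically ordered endpoints are isotopic after a sequence of such detours. This is essentially the statement that the ``virtual part'' of the diagram is combinatorially inert — it is recorded by nothing in $\m{G}$, so changing it must be a virtual move. Conversely, a virtual knot diagram's Gauss diagram is unchanged by virtual moves by definition, so the assignment $\{\text{virtual knot diagrams}\}/\text{virtual moves} \to \{\text{Gauss diagrams}\}$ is well defined, and the construction above provides a two-sided inverse.

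The step I expect to be the genuine obstacle is the uniqueness claim: showing that \emph{any} two ways of routing the connecting arcs — equivalently, two virtual knot diagrams with the same Gauss diagram — are connected by a finite sequence of detour moves, without accidentally invoking a classical Reidemeister move that would change $\m{G}$. The clean way to handle this is to normalize: isotope each diagram so that all real crossings and all strand-endpoints sit on a standard circle in the prescribed cyclic order with short standard ``real'' tangles near the crossings, reducing the comparison to two $1$-dimensional tangle diagrams in a disk with only virtual crossings and matching boundary, which are then related by planar isotopy and detour moves by a direct (if slightly tedious) argument on arc diagrams in the disk. I would present the forward construction in full detail and then cite or sketch this normalization, flagging that the detailed verification of the detour-move equivalence is where the real work lies.
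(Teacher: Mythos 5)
This statement is not proved in the paper at all: it is quoted verbatim as \cite[Theorem 1.A]{GPV} and used as an imported black box, so there is no internal proof to compare your attempt against. Your sketch is, in substance, the standard argument from the cited source: realizability of any Gauss diagram is obtained by drawing the real crossings and joining them by arbitrary arcs whose unavoidable extra intersections are declared virtual, and uniqueness up to virtual moves reduces to the detour move (an arc carrying only virtual crossings may be replaced by any other arc with the same endpoints). You correctly identify the detour-move equivalence as the only step needing real work; that step is exactly where the proof in \cite{GPV} concentrates its effort as well, so your outline is a faithful reconstruction rather than a genuinely different route.
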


Arguing similarly as in the real knot case, one can define {\it a shadow of the virtual knot} (see {\sc Figure} \ref{virt1}).

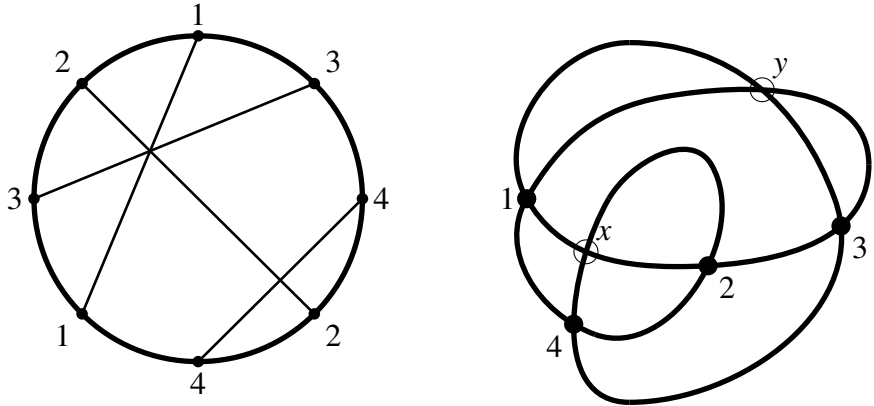
\begin{figure}[h!]
 \begin{tikzpicture}[scale =0.9]
  \draw [line width = 2, name path= a] (0,0) to [out = 300, in = 180] (2.2,-1) to [out = 0, in= 270] (5,0.5);
  \draw[line width =2,name path =b] (5,0.5) to [out = 90, in = 10] (2,1.5) to [out = 190, in = 60] (0,0);
  \draw[line width =2, name path =c1] (0,0) to [out = 240, in = 160] (1,-2);
  \draw[line width =2, name path =c2] (1,-2) to [out = 340, in = 300] (2.7, 0.5) to [out = 120, in=60] (1.2, 0);
  \draw[line width =2, name path = d] (1.2, 0) to [out = 240, in = 180] (1.5,-3);
  \draw[line width =2, name path =e] (1.5, -3) to [out = 0, in = 290] (4.5,0) to [out = 110, in =0] (1.5,2.3);
  \draw[line width =2] (1.5,2.3) to [out =180, in = 120] (0,0);
    \fill (0,0) circle (4pt) node[left] {$1$};
    \draw [name intersections={of=a and d, by={x}}]
(x) circle (5pt) node[above right] {$x$};
    \fill[name intersections = {of =a and c2, by ={2}}]
(2) circle (4pt) node[below right] {$2$};
    \fill[name intersections ={of =a and e, by ={3}}]
(3) circle (4pt) node[below right] {$3$};
    \draw [name intersections={of=b and e, by={y}}]
(y) circle (5pt) node[above right] {$y$};
\fill[name intersections ={of =c1 and d, by ={4}}]
(4) circle (4pt) node[below left] {$4$};
\begin{scope}[xshift = -4.8cm, scale =-0.8]
\draw[line width =2] (0,0) circle (3);
\draw[line width =1] (270:3)--(45:3);
\draw[line width =1] (315:3)--(135:3);
\draw[line width =1] (0:3)--(225:3);
\draw[line width =1] (90:3)--(180:3);
\fill(270:3) circle(3pt) node[above] {$1$};
\fill(315:3) circle(3pt) node[above left] {$2$};
\fill(0:3) circle(3pt) node[left] {$3$};
\fill(45:3) circle(3pt) node[below left] {$1$};
\fill(90:3) circle(3pt) node[below] {$4$};
\fill(135:3) circle(3pt) node[below right] {$2$};
\fill(180:3) circle(3pt) node[right] {$4$};
\fill(225:3) circle(3pt) node[above right] {$3$};
\end{scope}
\end{tikzpicture}
\caption{The chord diagram and the shadow of the virtual knot are shown. Here $x$ and $y$ are the virtual crossing points.}\label{virt1}
\end{figure}

\subsection{Conway's Smoothing}
We frequently use the following notations. Let $K$ be a knot, $\mathscr{C}$ its shadow and $\m{G}$ the Gauss diagram of $\mathscr{C}$. For every crossing $c$ of $\mathscr{C}$ we denote by $\m{c}$ the corresponding chord of $\m{G}$.

If a Gauss diagram $\m{G}$ contains a chord $\m{c}$ then we write $\m{c}\in\m{G}$. We denote by $\m{c}_0$, $\m{c}_1$ the endpoints of every chord $\m{c}\in \m{G}$. We shall also consider every chord $\m{c} \in \m{G}$ together with one of two arcs are between its endpoints, and a chosen arc is denoted by $\m{c}_0\m{c}_1$.

Further, $\m{c}_\times$ denotes the set of all chords cross the chord $\m{c}$ and $\m{c}_\parallel$ denotes the set of all chords do not cross the chord $\m{c}$. We put $\m{c} \not \in \m{c}_\times$, and $\m{c} \in \m{c}_\parallel$.

\textbf{Throughout this paper we consider Gauss diagrams such that $\m{c}_\times \ne \varnothing$ for every $\m{c \in G}$.}

As well known, John Conway introduced a ``surgical'' operation on knots, called \textit{smoothing}, consists in eliminating the crossing by interchanging the strands ({\sc Figure} \ref{Conway}).

\begin{figure}[h!]
 \begin{tikzpicture}[scale =1]
    \draw[->, line width =3] (1,0) -- (0,1);
    \draw[line width = 9,white] (0,0) -- (1,1);
    \draw[->, line width = 3] (0,0) -- (1,1);
    \draw[->,line width = 2] (1.3,0.5) -- (2.3,0.5);
    \draw[line width =3] (2.6, 0) to [out = 30, in = 270] (3,0.5);
    \draw[->,line width =3] (3,0.5) to [out =90,in = 330] (2.6,1);
    \draw[line width =3] (3.6,0) to [out = 150, in = 270] (3.2,0.5);
    \draw[->,line width =3] (3.2, 0.5) to [out = 90, in = 210] (3.6,1);
   \draw[->,line width =3] (0,-2)--(1,-1);
   \draw[->, line width =9,white] (1,-2) -- (0,-1);
   \draw[->, line width =3] (0,-1) -- (1,-2);
   \draw[->,line width = 2] (1.3,-1.5) -- (2.3,-1.5);
   \draw[line width=3] (2.6,-1) to [out = 300, in = 180] (3.1,-1.4);
   \draw[->,line width =3] (3.1, -1.4) to [out = 0, in = 240](3.6,-1);
   \draw[line width =3] (2.6, -2) to [out = 60, in =180](3.1,-1.6);
   \draw[->,line width =3] (3.1,-1.6) to [out = 0, in = 130] (3.6,-2);
  \end{tikzpicture}
    \caption{The Conway smoothing the crossings are shown.}\label{Conway}
\end{figure}
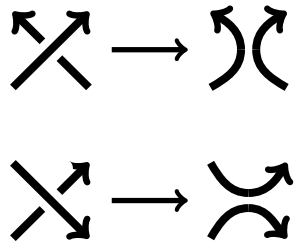

We aim to specialize a Conway smoothing a crossing of a plane curve to an operation on chords of the corresponding Gauss diagram.

Let $K$ be a knot, $\mathscr{C}$ its shadow, and $\m{G}$ the Gauss diagram of $\mathscr{C}$. Take a crossing point $c$ of $\mathscr{C}$ and let $D_c$ be a small disk centered at $c$ such that $D_c\cap \mathscr{C}$ does not contain another crossings of $\mathscr{C}$. Denote by $\partial D_c$ the boundary of $D_c$. Starting from $c$, let us walk on a path along the curve $\mathscr{C}$ until returning back to $c$. Denote this path by $\mathscr{L}_c$ and let $c c_a^lc_z^lc$ be the sequence of the points in the order we meet them on $\mathscr{L}_c$, where $ \{c_a^l,c_z^l\} = \mathscr{L}_c \cap \partial D_c$. After returning back to $c$ let us keep walking along the curve $\mathscr{C}$ in the same direction as before until returning back to $c$. Denote the corresponding path by $\mathscr{R}_c$ and let $cc_a^rc_z^rc$ be the sequence of the points in the order we meet them on $\mathscr{R}_c$, where $\{c_a^r, c_z^r\} = \partial D_c \cap \mathscr{R}_c$.

\begin{figure}[h!]
 \begin{center}
  \begin{tikzpicture}[scale=0.8]
   \fill[lightgray!20] (0.5,-0.5) circle(15pt);
   \draw [thick, name path= a] (0.5,-0.5) to [out = 150, in = 60] (-3,1);
   \draw [thick, name path= a1](-3,1) to [out = 240, in =190] (-2,-0.5);
   \draw [thick, name path= a2] (-2,-0.5) to [out=10, in = 270] (-0.5,1);
   \draw [thick, name path= a3](-0.5,1) to [out = 90, in = 0] (-1,2);
   \draw [thick, name path= a4](-1,2) to [out = 180, in = 100]
   (-2.2,-0.6) to [out = 280, in = 240] (0.5,-0.5);
   \draw[line width=2, name path =b] (0.5,-0.5) to [out = 60, in = 150] (2,0);
   \draw[line width=2, name path =b1] (2,0) to [out = 330, in = 320] (0,-1.8) to
   [out = 140, in=300] (-1,-0.5) to [out = 120, in = 250] (-0.7, 2.2) to [out=70, in = 80]
   (-4,1) to [out =260, in = 180] (-1,-2);
   \draw [line width = 2, name path= b2](-1,-2) to [out=0, in =330] (0.5,-0.5);
   \fill (0.5,-0.5) circle (4pt);
   \draw[name path = circ] (0.5,-0.5) circle (15pt);
   \draw (0.5,-0.5) node [right] {\small $c$};
   \fill [name intersections={of=circ and a, by={xla}}]
   (xla) circle (2pt) node[above,black] {\tiny$c^l_a$};
   \fill [name intersections={of=circ and a4, by={xlz}}]
   (xlz) circle (2pt) node[below,black] {\tiny $c^l_z$};
   \fill [name intersections={of=circ and b, by={xra}}]
   (xra) circle (2pt) node[above,black] {\tiny $c^r_a$};
   \fill [name intersections={of=circ and b2, by={xrz}}]
   (xrz) circle (2pt) node[below] {\tiny $c^r_z$};
   \fill [name intersections={of=a and a2, by={1}}]
   (1) circle (3pt) node[below =0.5mm of 1,black] {$1$};
   \fill [name intersections={of= a and b1, by={2}}]
   (2) circle (3pt) node[below left = -1mm of 2] {$2$};
   \fill [name intersections={of= a and a4, by={3}}]
   (3) circle (3pt) node[above] {$3$};
   \fill [name intersections={of= a1 and a4, by={4}}]
   (4) circle (3pt) node[below left = -1mm of 4] {$4$};
   \fill [name intersections={of= a2 and b1, by={5}}]
   (5) circle (3pt) node[below = 1mm of 5] {$5$};
   \fill [name intersections={of= a3 and b1, by={6}}]
   (6) circle (3pt) node[right ] {$6$};
   \fill [name intersections={of= a4 and b1, by={7}}]
   (7) circle (3pt) node[below] {$7$};
   \fill [name intersections={of= b1 and b2, by={8}}]
   (8) circle (3pt) node[below] {$8$};
 \begin{scope}[xshift=7cm]
   \draw [thick, name path= a] (0.5,-0.5) to [out = 150, in = 60] (-3,1);
   \draw [thick, name path= a1](-3,1) to [out = 240, in =190] (-2,-0.5);
   \draw [thick, name path= a2] (-2,-0.5) to [out=10, in = 270] (-0.5,1);
   \draw [thick, name path= a3](-0.5,1) to [out = 90, in = 0] (-1,2);
   \draw [thick, name path= a4](-1,2) to [out = 180, in = 100]
   (-2.2,-0.6) to [out = 280, in = 240] (0.5,-0.5);
   \draw[line width=2, name path =b] (0.5,-0.5) to [out = 60, in = 150] (2,0);
   \draw[line width=2, name path =b1] (2,0) to [out = 330, in = 320] (0,-1.8) to
[out = 140, in=300] (-1,-0.5) to [out = 120, in = 250] (-0.7, 2.2) to [out=70, in = 80]
(-4,1) to [out =260, in = 180] (-1,-2);
   \draw [line width = 2, name path= b2](-1,-2) to [out=0, in =330] (0.5,-0.5);
   \fill (0.5,-0.5) circle (4pt);
   \fill[name path = circ, white] (0.5,-0.5) circle (15pt);
   \fill [name intersections={of=circ and a, by={xla}}]
   (xla) circle (2pt) node[above,black] {\tiny$c^l_a$};
   \fill [name intersections={of=circ and a4, by={xlz}}]
   (xlz) circle (2pt) node[above =-0.5mm of xlz] {\tiny $c^l_z$};
   \fill [name intersections={of=circ and b, by={xra}}]
   (xra) circle (2pt) node[above] {\tiny $c^r_a$};
   \fill [name intersections={of=circ and b2, by={xrz}}]
   (xrz) circle (2pt) node[above = -0.5mm of xrz] {\tiny $c^r_z$};
   \fill [name intersections={of=a and a2, by={1}}]
   (1) circle (3pt) node[below =0.5mm of 1,black] {$1$};
   \fill [name intersections={of= a and b1, by={2}}]
   (2) circle (3pt) node[below left = -1mm of 2] {$2$};
   \fill [name intersections={of= a and a4, by={3}}]
   (3) circle (3pt) node[above] {$3$};
   \fill [name intersections={of= a1 and a4, by={4}}]
   (4) circle (3pt) node[below left = -1mm of 4] {$4$};
   \fill [name intersections={of= a2 and b1, by={5}}]
   (5) circle (3pt) node[below = 1mm of 5] {$5$};
   \fill [name intersections={of= a3 and b1, by={6}}]
   (6) circle (3pt) node[right ] {$6$};
   \fill [name intersections={of= a4 and b1, by={7}}]
   (7) circle (3pt) node[below] {$7$};
   \fill [name intersections={of= b1 and b2, by={8}}]
   (8) circle (3pt) node[below] {$8$};
   \draw (xrz) to  (xlz);
   \draw (xla) to (xra);
 \end{scope}
 \begin{scope} [yshift = -7cm, xshift = -2cm, line width=2]
   \draw (0,0) circle (3);
   \draw[thick] (120:3) -- (20:3);
   \draw[thick] (60:3) -- (160:3);
   \draw[thick] (80:3) -- (180:3);
   \draw[line width=4] (0:3) -- (220:3);
   \draw[dotted] (200:3)--(260:3);
   \draw[dotted] (100:3)--(280:3);
   \draw[dotted] (40:3)--(300:3);
   \draw[dotted] (140:3)--(320:3);
   \draw[line width =2] (240:3) -- (340:3);
   \fill (0:3) circle (4pt) node[right] {$\m{c}$};
   \fill (20:3) circle (3pt) node[right] {$1$};
   \fill (40:3) circle (3pt) node[right] {$2$};
   \fill (60:3) circle (3pt) node[above] {$3$};
   \fill (80:3) circle (3pt) node[above,black] {$4$};
   \fill (100:3) circle (3pt) node[above,black] {$5$};
   \fill (120:3) circle (3pt) node[above] {$1$};
   \fill (140:3) circle (3pt) node[above] {$6$};
   \fill (160:3) circle (3pt) node[left] {$3$};
   \fill (180:3) circle (3pt) node[left] {$4$};
   \fill (200:3) circle (3pt) node[left] {$7$};
   \fill (220:3) circle (4pt) node[left] {$\m{c}$};
   \fill (240:3) circle (3pt) node[below] {$8$};
   \fill (260:3) circle (3pt) node[below] {$7$};
   \fill (280:3) circle (3pt) node[below] {$5$};
   \fill (300:3) circle (3pt) node[below] {$2$};
   \fill (320:3) circle (3pt) node[below] {$6$};
   \fill (340:3) circle (3pt) node[right] {$8$};
   \end{scope}
   \begin{scope} [yshift = -7cm, xshift = 7cm, line width=2]
   \draw (0,0) circle (3);
   \draw[thick] (120:3) -- (20:3);
   \draw[thick] (60:3) -- (160:3);
   \draw[thick] (80:3) -- (180:3);
   \draw[dotted] (200:3)--(320:3);
   \draw[dotted] (100:3)--(300:3);
   \draw[dotted] (40:3)--(280:3);
   \draw[dotted] (140:3)--(260:3);
   \draw[line width =2] (240:3) -- (340:3);
   \fill (20:3) circle (3pt) node[right] {$1$};
   \fill (40:3) circle (3pt) node[right] {$2$};
   \fill (60:3) circle (3pt) node[above] {$3$};
   \fill (80:3) circle (3pt) node[above,black] {$4$};
   \fill (100:3) circle (3pt) node[above,black] {$5$};
   \fill (120:3) circle (3pt) node[above] {$1$};
   \fill (140:3) circle (3pt) node[above] {$6$};
   \fill (160:3) circle (3pt) node[left] {$3$};
   \fill (180:3) circle (3pt) node[left] {$4$};
   \fill (200:3) circle (3pt) node[left] {$7$};
   \fill (240:3) circle (3pt) node[below] {$8$};
   \fill (260:3) circle (3pt) node[below] {$6$};
   \fill (280:3) circle (3pt) node[below] {$2$};
   \fill (300:3) circle (3pt) node[below] {$5$};
   \fill (320:3) circle (3pt) node[below] {$7$};
   \fill (340:3) circle (3pt) node[right] {$8$};
   \end{scope}
\end{tikzpicture}
\end{center}
\caption{The Conway smoothing the crossing $c$ and the chord $\m{c}$ are shown.}\label{del}
\end{figure}
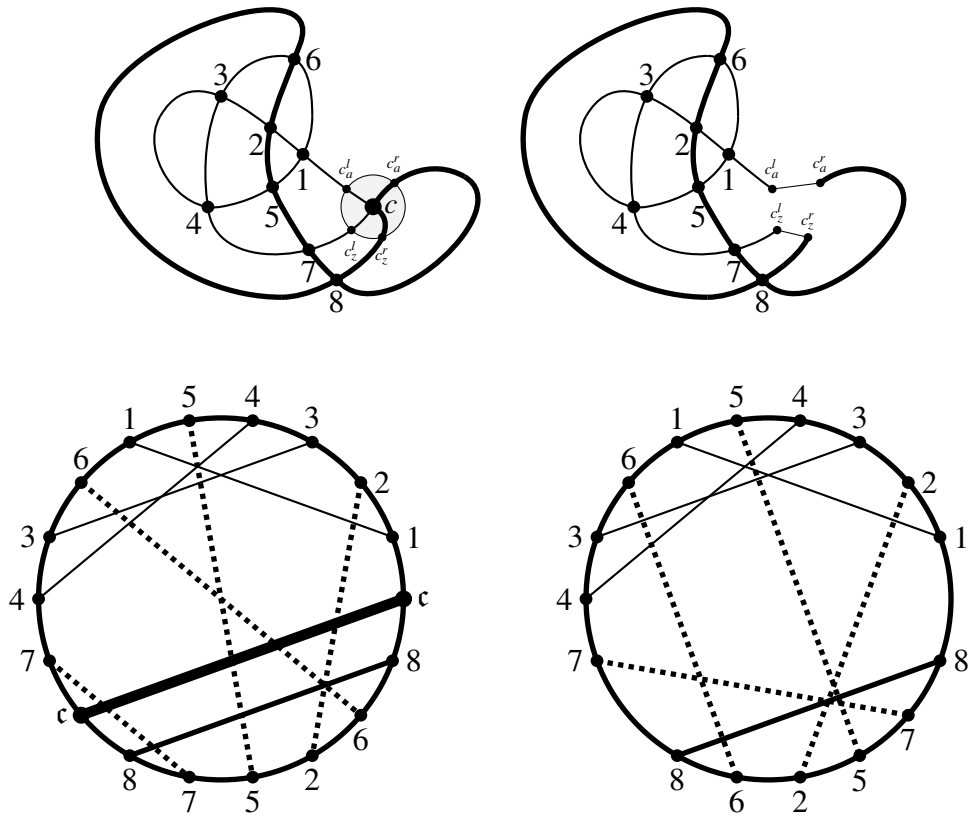

Next, let us delete the inner side of $D_c \cap \mathscr{C}$ and attach $c_l^a$ to $c_r^a$, and $c^r_z$ to $c^l_z$. We thus get the new plane curve $\widehat{\mathscr{C}}_c$ (see {\sc Figure} \ref{del}). It is easy to see that this curve is the shadow of the knot, which is obtained from $K$ by Conway's smoothing the crossing $c$. Let $\widehat{\m{G}}_\m{c}$ be the Gauss diagram of $\widehat{\mathscr{C}}_c$. We shall say that \textit{the Gauss diagram $\widehat{\m{G}}_\m{c}$ is obtained from the Gauss diagram $\m{G}$ by Conway's smoothing the chord $\m{c}$.}

As an immediate consequence of the preceding discussion, we get the following proposition.

\begin{proposition}\label{RemConway}
 Let $\m{G}$ be a Gauss diagram and $\m{c}$ be its arbitrary chord. Then $\widehat{\m{G}}_{\m{c}}$ is obtained from $\m{G}$ as follows: (1) delete the chord $\m{c}$, (2) if two chords $\m{a}, \m{b} \in \m{c}_\times$ intersect (\textit{resp.} do not intersected) in $\m{G}$ then they do not intersect in $\m{\widehat{G}_c}$ (\textit{resp.} intersected), (3) another chords keep their positions.
\end{proposition}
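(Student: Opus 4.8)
The plan is to reduce the statement to a purely combinatorial one about Gauss codes (double–occurrence words) and then to settle it with the elementary criterion \emph{two chords cross iff their four endpoints alternate around the circle}. First I would fix the Gauss code of $\mathscr{C}$ that is compatible with the walk used in the construction: starting at $c$, traverse $\mathscr{L}_c$ and then $\mathscr{R}_c$, and let $W_1$ be the word of crossings met along $\mathscr{L}_c$ strictly between the two passages through $c$, and $W_2$ the word of crossings met along $\mathscr{R}_c$. Since $D_c$ was chosen to contain no crossing besides $c$, the crossings recorded by $W_1$ and $W_2$ are exactly those lying outside $D_c$, so the Gauss code of $\mathscr{C}$ is the cyclic word $\m{c}\,W_1\,\m{c}\,W_2$ and $\m{G}$ is its chord diagram. (The standing assumption $\m{c}_\times\neq\varnothing$ merely guarantees that $W_1$ and $W_2$ are both nonempty.)

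Next I would read off the Gauss code of $\widehat{\mathscr{C}}_c$. After deleting $D_c\cap\mathscr{C}$ one is left with the two arcs $P_1$, running from $c^l_a$ to $c^l_z$ and carrying $W_1$ in order, and $P_2$, running from $c^r_a$ to $c^r_z$ and carrying $W_2$ in order; the surgery glues $c^l_a$ to $c^r_a$ and $c^l_z$ to $c^r_z$, i.e.\ it joins $P_1$ and $P_2$ head to head and tail to tail. Tracing the resulting curve — along $P_1$ from $c^l_a$ to $c^l_z$, across to $c^r_z$, back along $P_2$ from $c^r_z$ to $c^r_a$, and across to $c^l_a$ — shows that $\widehat{\mathscr{C}}_c$ is a single closed curve (hence genuinely a knot) whose Gauss code is the cyclic word $W_1\,\overline{W_2}$, where $\overline{W_2}$ is $W_2$ written in reverse. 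Thus $\widehat{\m{G}}_{\m{c}}$ is the chord diagram of $W_1\,\overline{W_2}$. The one point requiring care here is the appearance of the reversal $\overline{W_2}$, which comes from the fact that $P_2$ is traversed opposite to the direction it inherits from the walk along $\mathscr{C}$.

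It then remains to compare the chord diagrams of $\m{c}\,W_1\,\m{c}\,W_2$ and $W_1\,\overline{W_2}$. Classify a chord $\m{a}\neq\m{c}$ by the location of its endpoints: having both endpoints inside $W_1$, or both inside $W_2$, is exactly the condition $\m{a}\in\m{c}_\parallel$, while having one endpoint in $W_1$ and the other in $W_2$ is exactly $\m{a}\in\m{c}_\times$. The passage from $\m{c}\,W_1\,\m{c}\,W_2$ to $W_1\,\overline{W_2}$ is the composite of (a) deleting the two occurrences of $\m{c}$ and (b) reversing the contiguous block $W_2$. Step (a) removes the chord $\m{c}$ and leaves the alternation relation of every other pair of chords unchanged, which is item (1). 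For step (b) one proves the elementary fact that reversing a contiguous block of a cyclic double–occurrence word preserves the ``cross / do not cross'' relation of every pair of chords \emph{except} the pairs both of whose chords have exactly one endpoint inside the block and one outside, for which the relation is flipped; this is checked by running through the cases ``all four endpoints inside the block'', ``all four outside'', ``one chord inside and one outside'', ``one chord straddling and one inside/outside'', and ``both chords straddling''. Since the block is $W_2$, the exceptional pairs are precisely the pairs of chords lying both in $\m{c}_\times$, which yields item (2); item (3) is exactly what is left over.

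The main obstacle I expect is the bookkeeping in the two middle steps: correctly deducing the re-glued Gauss code $W_1\,\overline{W_2}$ (in particular pinning down the reversal and the connectivity), and verifying the block-reversal lemma cleanly through the case distinction above. Once those are in place, items (1)--(3) drop out immediately, so the rest is routine.
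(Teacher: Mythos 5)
Your proposal is correct and takes essentially the same approach as the paper: both reduce to the Gauss code, write it as $W_1\, c\, W_2\, c\, W_3$ (your $\m{c}\,W_1\,\m{c}\,W_2$ with basepoint at $c$), and observe that the smoothing produces the word with the middle block reversed, from which the three items are read off. The only difference is that you spell out the block-reversal case analysis (which pairs of chords have their crossing relation flipped) that the paper compresses into ``and the statement follows''; that verification is sound.
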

\begin{proof}
  Indeed, let $W$ be the word which is the sequence of the crossings in the order we meet them on the curve $\mathscr{C}$. Since $\m{c}_\times \ne \varnothing$, $W$ can be written as follows $W = W_1 c W_2 c W_3$, where $W_1,W_2,W_3$ are subwords of $W$ and at least one of $W_1,W_3$ is not empty. Define $W_2^R$ as the reversal of the word $W_2$. Then, from the preceding discussion, the word $\widehat{W}_c: = W_1 W_2^R W_3$ gives $\widehat{\m{G}}_{\m{c}}$ (see {\sc Figure} \ref{del}) and the statement follows.
\end{proof}

\section{Partitions of Gauss Diagrams}
In this section we introduce notations, whose importance will become clear as we proceed.

\begin{definition}
  Let $\m{G}$ be a Gauss diagram and $\m{a}$ a chord of $\m{G}$. \textit{A $C$-contour}, denoted $C(\m{a})$, consists of the chord $\m{a}$, a chosen arc $\m{a}_0\m{a}_1$, and all chords of $\m{G}$ such that all their endpoints lie on the arc $\m{a}_0\m{a}_1$. We call a chord from the set $\m{a}_\times$ \textit{the door chord of the $C$-contour $C(\m{a})$}.
\end{definition}

Let us consider a plane curve $\mathscr{C}:S^1 \to \mathbb{R}^2$ and let $\m{G}$ be its Gauss diagram. Every chord $\m{c \in G}$ correspondences to the crossing $c$ of $\mathscr{C}$. Thus for every $C$-contour $C(\m{c})$, we can associate a closed path $\mathscr{C}(c)$ along the curve $\mathscr{C}$. We call $\mathscr{C}(c)$ \textit{ the loop of the curve} $\mathscr{C}$. It is obviously that there is the one-to-one correspondence between self-intersection points of $\mathscr{C}(c)$ and all chords from $C(\m{c})$.

\begin{figure}[h!]
 \begin{center}
 \begin{tikzpicture}[scale = 4.5]
 \draw [line width = 3, name path= a] (0.3,0.4) to [out = 300, in = 180] (0.6,0.2) to [out = 0, in= 270] (1,0.4);
 \draw [line width = 3, name path= aa] (1,0.4) to [out = 90, in = 0] (0.6, 0.8) to [out = 180, in = 75] (0.3,0.4);
 \draw [thick, white] (0.3,0.4) to [out = 300, in = 180] (0.6,0.2) to [out = 0, in= 270] (1,0.4);
 \draw [thick, white] (1,0.4) to [out = 90, in = 0] (0.6, 0.8) to [out = 180, in = 75] (0.3,0.4);
 \draw [line width =3] (0.3,0.4) [out = 255, in = 140] to (0.4, 0);
 \draw[line width =3, name path= b] (0.4,0) to [out = 320, in =180] (0.6,-0.1);
 \draw[line width =3, name path= c](0.6,-0.1) to [out= 0, in = 270] (0.8,0.2);
 \draw[line width =3, name path= d] (0.8,0.2)to [out = 90, in =270] (0.6, 0.9) to [out = 90, in = 180] (0.7, 1) to [out= 0, in = 90] (0.9,0.8);
 \draw[line width =3, name path= bb](0.9,0.8)to [out= 270, in = 60] (0.4, 0);
 \draw[line width =2, dashed, white] (0.4,0) to [out = 320, in =180] (0.6,-0.1);
 \draw[line width =2, dashed, white](0.6,-0.1) to [out= 0, in = 270] (0.8,0.2);
 \draw[line width =2, dashed, white] (0.8,0.2)to [out = 90, in =270] (0.6, 0.9) to [out = 90, in = 180] (0.7, 1) to [out= 0, in = 90] (0.9,0.8);
 \draw[line width =2, dashed, white](0.9,0.8)to [out= 270, in = 60] (0.4, 0);
 \draw [line width =3] (0.4,0) to [out = 240, in =0] (0.3, -0.1) to [out = 180, in =150] (0.3,0.4);
  \fill [name intersections={of=a and bb, by={1}}]
(1) circle (0.7pt) node[above] {$1$};
\fill [name intersections={of=a and d, by={2}}]
(2) circle (0.7pt) node[above left] {$2$};
\fill [name intersections={of=aa and bb, by={3}}]
(3) circle (0.7pt) node[above right] {$3$};
\fill [name intersections={of=aa and d, by={4}}]
(4) circle (0.7pt) node[above left] {$4$};
\fill [name intersections={of=bb and d, by={5}}]
(5) circle (0.7pt) node[right] {$6$};
\fill (0.3,0.4) circle(1pt) node[above left] {$0$};
\fill (0.4,0) circle(1pt) node[below =1mm] {$5$};
\begin{scope}[scale = 0.25, xshift = 8cm, yshift = 2cm, line width=2]
   \draw[line width =3] (0,0) circle (2);
   \begin{scope}[rotate =180]
   \draw[white, thick] (2,0) arc(0:125:2);
   \end{scope}
    \begin{scope}[rotate =330]
   \draw[dashed,  white] (2,0) arc(0:185:2);
   \end{scope}
    \draw[black] (180:2) -- (305:2);
    \draw[thick, white] (180:2) -- (305:2);
    \draw (230:2)--(355:2);
    \draw (280:2)--(55:2);
    \draw (255:2)--(80:2);
    \draw[line width=3] (20:2)--(105:2);
    \draw[white, dashed,line width=3] (20:2)--(105:2);
    \draw (205:2)--(130:2);
    \draw[line width=4] (330:2)--(155:2);
    \draw[white, dashed,line width=4] (330:2)--(155:2);
    \fill (180:2) circle(3pt) node[left] {$0$};
    \fill (205:2) circle(3pt) node[left] {$1$};
    \fill (230:2) circle(3pt) node[below] {$2$};
    \fill (255:2) circle(3pt) node[below] {$3$};
    \fill (280:2) circle(3pt) node[below] {$4$};
    \fill (305:2) circle(3pt) node[below] {$0$};
    \fill (305:2) circle(3pt) node[below] {$0$};
    \fill (330:2) circle(3pt) node[right] {$5$};
    \fill (355:2) circle(3pt) node[right] {$2$};
    \fill (20:2) circle(3pt) node[right] {$6$};
    \fill (55:2) circle(3pt) node[above] {$4$};
    \fill (80:2) circle(3pt) node[above] {$3$};
    \fill (105:2) circle(3pt) node[above] {$6$};
    \fill (130:2) circle(3pt) node[above] {$1$};
    \fill (155:2) circle(3pt) node[left] {$5$};
  \end{scope}
 \end{tikzpicture}
 \end{center}
  \caption{Every $C$-contour of the Gauss diagram correspondences to the closed path along the plane curve and vise versa. We see that the chord $6$ correspondences to the self-intersection point $6$ of the dotted loop.}\label{arc}
\end{figure}
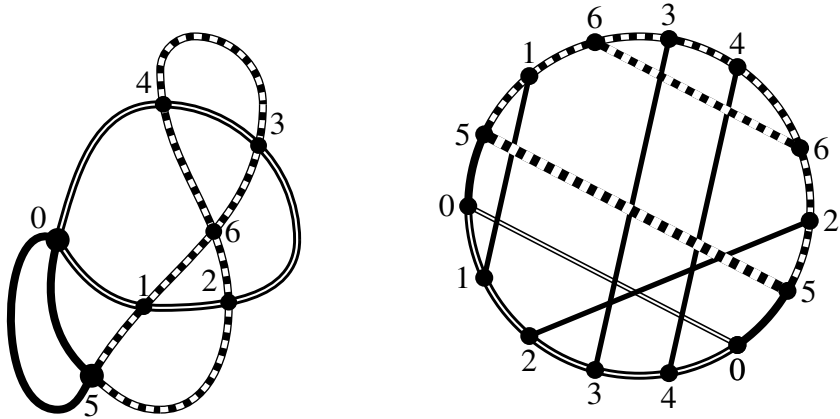

\begin{example}
  In {\sc Figure} \ref{arc} the plane curve $\mathscr{C}$ and its Gauss diagram are shown. Consider the (cyan) $C$-contour $C(5)$. We see that $\mathscr{C}(5)$ is the closed path along the curve. It is the self-intersecting path and we see that the crossing $6$ correspondences to the chord from the set $5_\parallel$. Further, the red closed path $\mathscr{C}(0)$ correspondences to the red $C$-contour $C(0)$.
\end{example}

\begin{definition}
Let $\m{G}$ be a Gauss diagram, $\m{a},\m{b}$ its intersecting chords. \textit{An $X$-contour}, denoted $X(\m{a},\m{b})$, consists of two non-intersecting arcs $\m{a}_0\m{b}_0$, $\m{a}_1\m{b}_1$ and all chords of $\m{G}$ such that all their endpoints lie on $\m{a}_0\m{b}_0$ or on $\m{a}_1\m{b}_1$. A chord is called \textit{the door chord of the $X$-contour $X(\m{a},\m{b})$} if only one of its endpoints belongs to $X(\m{a},\m{b})$. We say that the $X$-contour $X(\m{a},\m{b})$ is \textit{non-degenerate} if it has at least one door chord, and it does not contain all chords of $\m{G}$.
\end{definition}

\begin{example}
Let us consider the Gauss diagram in {\sc Figure \ref{X-cont}}. The black chords are the door chords of the black $X$-contour $X(1,3)$. We see that the door chords correspondence to ``entrances'' and ``exits'' of the black closed path along the curve. We also see that this $X$-contour is non-degenerate.
\end{example}

The previous Example implies a partition of a Gauss diagram (\textit{resp.} a plane curve) into two parts.

\begin{definition}[\textbf{An $X$-contour coloring}]\label{coloring}
Given a Gauss diagram $\m{G}$ and its an $X$-contour. Let us walk along the circle of $\m{G}$ in a chosen direction and color all arcs of $\m{G}$ until returning back to the origin as follows: (1) we don't colors the arcs of the $X$-contour, (2) we use only two different colors, (3) we change a color whenever we meet an endpoint of a door chord.
\end{definition}

Similarly, one can define \textbf{a $C$-contour coloring} of a Gauss diagram $\m{G}$.

\begin{remark}
Let $\m{G}$ be a Gauss diagram and $\mathscr{C}$ the corresponding (may be virtual) plane curve, \textit{i.e.,} $\m{G}$ determines the curve $\mathscr{C}$. For every $X$-contour $X(\m{a},\m{b})$ in $\m{G}$, we can associate the closed path along the curve $\mathscr{C}$. We call this path \textit{the $\mathscr{X}$-contour} and denote by $\mathscr{X}(a,b)$. Similarly one can define \textit{door crossing} for $\mathscr{X}(a,b)$.

Further, for the $X(\m{a},\m{b})$-contour coloring of $\m{G}$, we can associate \textit{$\mathscr{X}(a,b)$-contour coloring of the curve $\mathscr{C}$}.

Next, let $\m{G}$ be a realizable Gauss diagram determines the plane curve $\mathscr{C}$ and let $X(\m{a},\m{b})$ be an $X$-contour of $\m{G}$ such that $\mathscr{X}(a,b)$ is the non-self-intersecting path (= the Jordan curve). Then the $\mathscr{X}(a,b)$-contour coloring of $\mathscr{C}$ divides the curve $\mathscr{C}$ into two colored parts, \textit{cf.} Jordan curve Theorem.
\end{remark}

\begin{figure}[h!]
 \begin{tikzpicture}
 \begin{scope}[xshift = 3cm,scale =0.9]
  \draw [line width = 3,name path =v1] (-1,0) to [out = 30, in = 180] (1,-0.2);
  \draw [line width = 2,white] (-1,0) to [out = 30, in = 180] (1,-0.2);
  \draw[line width =3, name path= o1] (1,-0.2) to [out =0, in = 200] (4,0.2);
  \draw [line width = 3, name path= a] (4,0.2) to [out =20 , in = 270] (5,1.5);
  \draw [line width = 2, white] (4,0.2) to [out =20 , in = 270] (5,1.5);
  \draw [line width = 3, name path= b] (5,1.5) to [out = 90, in = 70] (0,1);
  \draw [line width = 2, white] (5,1.5) to [out = 90, in = 70] (0,1);
  \draw [line width = 3, name path =c] (0,1) to [out = 250, in = 180] (0.3,-1);
  \draw [line width = 2, white] (0,1) to [out = 250, in = 180] (0.3,-1);
  \draw [line width = 3 ] (0.3,-1) to [out = 0, in = 250] (1,-0.2);
  \draw [line width = 2,white ] (0.3,-1) to [out = 0, in = 250] (1,-0.2);
  \draw [line width = 3, name path= o2] (1,-0.2) to [out = 70, in=180] (2.5,1.5);
  \draw [line width = 3, name path= o3] (2.5,1.5) to [out = 0, in = 120] (4,0.2);
  \draw [line width =3] (4,0.2) to [out = 300, in = 0] (3,-1);
  \draw [line width =2, white] (4,0.2) to [out = 300, in = 0] (3,-1);
  \draw [line width = 3 ] (0.3,-1) to [out = 0, in = 250] (1,-0.2);
 \draw [line width = 2, white ] (0.3,-1) to [out = 0, in = 250] (1,-0.2);
  \draw [name path= w, white] (3,-1) to [out = 180, in = 240] (2.5,0.3);
  \fill [name intersections={of=o1 and w, by={T4}}]
 (T4) circle (1pt);
 \draw [name path= h2, white] (T4) to (3.2,2);
  \fill [name intersections = {of =o3 and h2, by ={T2}}]
 (T2) circle (1pt);
 \draw [name path= h3, white] (T4) to (4,1);
  \fill [name intersections = {of =o3 and h3, by ={T3}}]
 (T3) circle (1pt);
 \draw [name path= h4, white] (T4) to (1,1.5);
  \fill [name intersections = {of =o2 and h4, by ={T1}}]
 (T1) circle (1pt);
\draw [line width =3] (3,-1) to [out = 180, in = 260] (T4);
\draw [line width =2, white] (3,-1) to [out = 180, in = 260] (T4);
\draw [line width =3,name path=y1] (T4) to [out = 80, in = 220] (T2);
\draw [line width =2,dashed,white] (T4) to [out = 80, in = 220] (T2);
\draw [line width =3, name path = d] (T2) to [out = 40, in = 300] (5,3);
\draw [line width =2, white] (T2) to [out = 40, in = 300] (5,3);
\draw [line width =3, name path = d1] (5,3) to [out = 120, in = 140] (0,0.5);
\draw [line width =2, white] (5,3) to [out = 120, in = 140] (0,0.5);
\draw [line width =3,name path=v2] (0,0.5) to [out = 320, in = 150] (T1);
\draw [line width =2, white] (0,0.5) to [out = 320, in = 150] (T1);
\draw [line width =3, name path = y2] (T1) to [out = 330, in = 200] (T3);
\draw [line width =2, dashed, white] (T1) to [out = 330, in = 200] (T3);
\draw [line width =3, name path = d2] (T3) to [out = 20, in = 90] (6,0);
\draw [line width =2, white] (T3) to [out = 20, in = 90] (6,0);
\draw [line width =3] (6,0) to [out = 270, in = 0] (2,-2);
\draw [line width =2, white] (6,0) to [out = 270, in = 0] (2,-2);
\draw [line width =3] (2,-2) to [out = 180, in = 270] (-1.3,-0.5);
\draw [line width =2, white] (2,-2) to [out = 180, in = 270] (-1.3,-0.5);
\draw [line width =3] (-1.3, -0.5) to [out = 90, in = 210] (-1,0);
\draw [line width =2, white] (-1.3, -0.5) to [out = 90, in = 210] (-1,0);
\fill (T1) circle (3pt) node [above] {$7$};
\fill (T2) circle (3pt) node [above] {$8$};
\fill (T3) circle (3pt) node [above] {$9$};
\fill (T4) circle (3pt) node [below right] {$2$};
 \fill [name intersections = {of =y1 and y2, by ={I}}]
 (I) circle (3pt) node [above left] {$10$};
  \fill (1,-0.2) circle (3pt) node [below right] {$1$};
  \fill (4,0.2) circle (3pt) node [below right] {$3$};
 \fill [name intersections = {of =v1 and c, by ={a1}}]
 (a1) circle (4pt) node [below left] {$0$};
 \fill [name intersections = {of =a and d2, by ={a2}}]
 (a2) circle (4pt) node [above right] {$4$};
  \fill [name intersections = {of =b and d, by ={a3}}]
 (a3) circle (4pt) node [above] {$5$};
  \fill [name intersections = {of =d1 and c, by ={a4}}]
 (a4) circle (4pt) node [above right] {$6$};
\end{scope}
 \begin{scope}[xshift =-2cm, yshift = 1cm,scale=0.6]
   \draw[line width =2] (0,0) circle (4);
   \begin{scope}[rotate =30]
   \draw[line width=3] (4,0) arc(0:45:4);
   \end{scope}
    \begin{scope}[rotate =150]
   \draw[line width=3] (4,0) arc(0:60:4);
   \end{scope}
   \begin{scope}[rotate =75]
   \draw[line width=5] (4,0) arc(0:75:4);
   \draw[ white, line width=3] (4,0) arc(0:75:4);
   \end{scope}
   \begin{scope}[rotate =210]
   \draw[ line width=5] (4,0) arc(0:15:4);
   \draw[ line width=3, white] (4,0) arc(0:15:4);
   \end{scope}
   \begin{scope}[rotate =225]
   \draw[line width=5] (4,0) arc(0:30:4);
   \draw[line width=2, white,dashed] (4,0) arc(0:30:4);
   \end{scope}
   \begin{scope}[rotate =255]
   \draw[ line width=5] (4,0) arc(0:45:4);
   \draw[ line width=3,white] (4,0) arc(0:45:4);
   \end{scope}
   \begin{scope}[rotate =300]
   \draw[ line width=5] (4,0) arc(0:30:4);
   \draw[ line width=2, dashed, white] (4,0) arc(0:30:4);
   \end{scope}
   \begin{scope}[rotate =330]
   \draw[ line width=5] (4,0) arc(0:60:4);
   \draw[ line width=3, white] (4,0) arc(0:60:4);
   \end{scope}
   \draw[line width=3] (30:4) -- (150:4);
   \draw[line width=3] (75:4) -- (210:4);
   \draw[line width =2] (55:4)--(225:4);
   \draw[line width =2] (165:4) -- (300:4);
   \draw[line width =2] (180:4) -- (255:4);
   \draw[line width =2] (195:4) -- (330:4);
   \draw[line width =3] (240:4) -- (315:4);
   \draw[line width =1, dashed, white] (240:4) -- (315:4);
   \draw[line width =2] (105:4) -- (270:4);
   \draw[line width =1, white] (105:4) -- (270:4);
   \draw[line width =2] (120:4) -- (285:4);
   \draw[line width =1, white] (120:4) -- (285:4);
   \draw[line width =2] (90:4) -- (350:4);
   \draw[line width =1, white] (90:4) -- (350:4);
   \draw[line width =2] (135:4) -- (10:4);
   \draw[line width =1, white] (135:4) -- (10:4);
    \fill (30:4) circle(4pt) node[above right] {$\m{1}$};
    \fill (55:4) circle(4pt) node[above right] {$\m{2}$};
    \fill (75:4) circle(4pt) node[above right] {$\m{3}$};
    \fill (90:4) circle(4pt) node[above] {$4$};
    \fill (105:4) circle(4pt) node[above] {$5$};
    \fill (120:4) circle(4pt) node[above] {$6$};
    \fill (135:4) circle(4pt) node[above] {$0$};
    \fill (150:4) circle(4pt) node[left] {$1$};
    \fill (165:4) circle(4pt) node[left] {$7$};
    \fill (180:4) circle(4pt) node[left] {$8$};
    \fill (195:4) circle(4pt) node[left] {$9$};
    \fill (210:4) circle(4pt) node[left] {$3$};
    \fill (225:4) circle(4pt) node[left] {$2$};
    \fill (240:4) circle(4pt) node[below] {$10$};
    \fill (255:4) circle(4pt) node[below] {$8$};
    \fill (270:4) circle(4pt) node[below] {$5$};
    \fill (285:4) circle(4pt) node[below] {$6$};
    \fill (300:4) circle(4pt) node[below] {$7$};
    \fill (315:4) circle(4pt) node[below] {$10$};
    \fill (330:4) circle(4pt) node[right] {$9$};
    \fill (350:4) circle(4pt) node[right] {$4$};
    \fill (10:4) circle(4pt) node[right] {$0$};
    \end{scope}
\end{tikzpicture}
 \caption{For the $X(1,3)$-contour coloring of the Gauss diagram, we associate the plane curve coloring. We see that the $\mathscr{X}$-contour $\mathscr{X}(1,3)$ (= black loop) divides the plane curve into two parts.}\label{X-cont}
\end{figure}
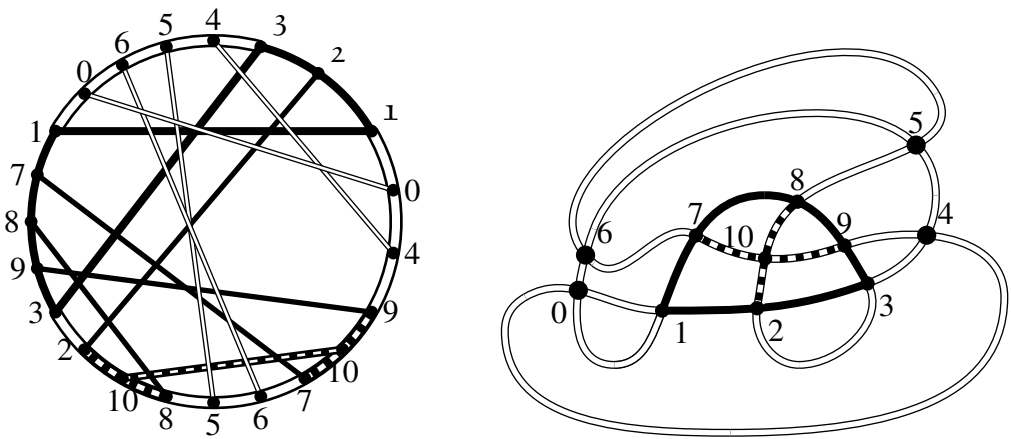

\section{The Even and The Sufficient Conditions}
If a Gauss diagram can be realized by a plane curve we then say that this Gauss diagram is \textit{realizable,} and \textit{non-realizable} otherwise. So, in this section, we give a criterion allowing verification and comprehension of whether a given Gauss diagram is realizable or not. Moreover, we give an explanation allowing comprehension of why the needed condition is not sufficient for realizability of Gauss diagrams.

\subsection{The Even Condition}

\begin{proposition}\label{prop1}
Let $\mathscr{C}:S^1 \to \mathbb{R}^2$ be a plane curve and $\m{G}$ its Gauss diagram. Then
\begin{itemize}
   \item[(1)] $|\m{a}_\times \cap \m{b}_\times| \equiv 0 \bmod 2$ for every two non-interesting chords $\m{a,b \in G}$,
  \item[(2)] $|\m{c}_\times| \equiv 0 \bmod 2$ for every chord $\m{c \in G}$.
\end{itemize}
\end{proposition}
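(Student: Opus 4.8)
The plan is to deduce both parts from a single topological fact, proved by the colouring idea~(3) of the Introduction, and otherwise to do bookkeeping on the circle. The fact I would isolate first is: \emph{if $\Gamma_1$ and $\Gamma_2$ are generically immersed closed curves in $\mathbb{R}^2$ that meet transversally at finitely many points and elsewhere at most touch (meet without crossing), then the number of transversal intersection points of $\Gamma_1$ and $\Gamma_2$ is even.} Its proof is the colouring: give each point of $\mathbb{R}^2\setminus\Gamma_1$ the colour in $\{0,1\}$ equal to the parity of the number of transversal intersections with $\Gamma_1$ of a generic ray from that point to infinity. This colour is well defined and locally constant; it flips exactly when one crosses $\Gamma_1$ transversally and is unchanged at a touching point. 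Traversing the closed curve $\Gamma_2$ and returning to the start, the colour returns to its initial value, so $\Gamma_2$ crosses $\Gamma_1$ transversally an even number of times. (For $\Gamma_1$ a Jordan curve this is just ``inside/outside''; in general it is the elementary form of Jordan's theorem used throughout the paper.)

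For~(2): let $c$ be the crossing of $\mathscr{C}$ with chord $\m{c}$, and let $\m{c}_0,\m{c}_1$ split $S^1$ into the arc $\m{c}_0\m{c}_1$ and its complement. Take $\mathscr{C}(c)$ to be the loop that is the image of $\m{c}_0\m{c}_1$ (a closed curve, since both endpoints of that arc map to $c$) and $\mathscr{C}'$ the loop that is the image of the complementary arc; together they form $\mathscr{C}$. A crossing $d\ne c$ lies on both $\mathscr{C}(c)$ and $\mathscr{C}'$ exactly when its two preimages lie on different arcs, i.e.\ exactly when $\m{d}\in\m{c}_\times$; every other crossing is a self-intersection of one of the two loops. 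The only subtlety is the shared point $c$: transversality of $\mathscr{C}$ at $c$ forces the two ends of $\mathscr{C}(c)$ at $c$ to be cyclically adjacent among the four local rays, and likewise for $\mathscr{C}'$, so $\mathscr{C}(c)$ and $\mathscr{C}'$ touch but do not cross at $c$. Hence the fact above applies with $\Gamma_1=\mathscr{C}(c)$, $\Gamma_2=\mathscr{C}'$ and gives $|\m{c}_\times|\equiv 0\bmod 2$. (One could instead split $\mathscr{C}$ at $c$ by the smoothing that disconnects it into two disjoint immersed circles, making the touching point disappear.)

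For~(1): as $\m{a}$ and $\m{b}$ do not cross, after relabelling their endpoints sit on $S^1$ in cyclic order $\m{a}_0,\m{b}_0,\m{b}_1,\m{a}_1$. Let $\alpha$ be the arc determined by $\m{a}$ with $\m{b}_0,\m{b}_1\notin\alpha$, and $\beta$ the arc determined by $\m{b}$ with $\m{a}_0,\m{a}_1\notin\beta$; these are disjoint. One checks at once that a chord lies in $\m{a}_\times$ iff exactly one of its endpoints is on $\alpha$, lies in $\m{b}_\times$ iff exactly one is on $\beta$, and therefore lies in $\m{a}_\times\cap\m{b}_\times$ iff one endpoint is on $\alpha$ and the other on $\beta$. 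Now let $\mathscr{C}(a)$ be the loop of $\mathscr{C}$ attached to the $C$-contour $C(\m{a})$ with chosen arc $\alpha$, and $\mathscr{C}(b)$ the loop attached to $C(\m{b})$ with chosen arc $\beta$ --- these are the two closed paths of idea~(2). Because $\overline{\alpha}$ and $\overline{\beta}$ are disjoint arcs, $\mathscr{C}(a)$ and $\mathscr{C}(b)$ meet precisely at the crossings having one preimage on $\alpha$ and one on $\beta$, i.e.\ exactly at the crossings of $\m{a}_\times\cap\m{b}_\times$, and all of them are transversal. The fact then gives $|\m{a}_\times\cap\m{b}_\times|\equiv 0\bmod 2$.

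Almost everything here is bookkeeping: the two dictionary translations ``$\m{d}$ crosses $\m{c}$'' $\leftrightarrow$ ``$d\in\mathscr{C}(c)\cap\mathscr{C}'$'' and ``$\m{d}$ crosses both $\m{a},\m{b}$'' $\leftrightarrow$ ``$d\in\mathscr{C}(a)\cap\mathscr{C}(b)$'', plus the routine, Jordan-type colouring statement. The one genuinely geometric point I expect to need care with is in~(2): that $\mathscr{C}(c)$ and its complementary loop do not cross at their common point $c$. This is where transversality of the immersion at $c$ enters, and it is the only place in the argument that it does; it evaporates entirely if one works instead with the disconnecting smoothing at $c$.
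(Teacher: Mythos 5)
Your proof is correct, and it follows the same overall skeleton as the paper's --- split the curve into loops attached to the chords in question, identify $\m{a}_\times\cap\m{b}_\times$ (resp.\ $\m{c}_\times$) with the intersection set of two loops, and conclude by a parity/colouring argument --- but the key step is handled differently. The paper reduces to the case where both loops are embedded (non-self-intersecting) by Conway-smoothing every chord of $\m{a}_\parallel\cap\m{b}_\parallel$, invoking Proposition \ref{RemConway} to see that this does not disturb $\m{a}_\times\cap\m{b}_\times$, and only then applies the Jordan curve theorem and the ``entrances $=$ exits'' count. You instead prove the general statement that two immersed closed curves in the plane have even mod-$2$ intersection number, via the winding-number colouring of $\mathbb{R}^2\setminus\Gamma_1$ by the parity of ray crossings; this applies to self-intersecting loops directly, so the reduction step disappears. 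What your route buys is twofold: you avoid the smoothing reduction (which, while correct, requires the bookkeeping of Proposition \ref{RemConway}), and --- more substantively --- you address a point the paper's proof passes over in silence when it says ``arguing similarly'' for part (2): there the two loops are not disjoint away from the crossings being counted, since they share the base point $c$, and one must check that they touch rather than cross there (or smooth at $c$ first, as you note parenthetically). Your verification that the two local rays of each loop at $c$ are cyclically adjacent among the four rays is exactly the missing detail. What the paper's route buys is that it stays entirely within the Jordan curve theorem as literally stated, which fits the expository theme of the article; your colouring is the standard mod-$2$ degree argument and is the ``elementary form of Jordan's theorem'' in any case, so the difference is one of packaging rather than of substance.
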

\begin{proof}
Let $\m{a},\m{b}\in\m{G}$ be two non-intersecting chords of $\m{G}$. Take two $C$-contours $C(\m{a})$, $C(\m{b})$ such that their arcs $\m{a_0a_1}$, $\m{b_0b_1}$ do not intersect. It is obvious that for the loops $\mathscr{C}(a)$, $\mathscr{C}(b)$, we can associate the one-to-one correspondence between the set $\mathscr{C}(a) \cap \mathscr{C}(b)$ and the set $\m{a}_\times\cap\m{b}_\times$.

Because, by Proposition \ref{RemConway}, all chord from the set $\m{a}_\times \cap \m{b}_\times$ keep their positions in Gauss diagram $\m{\widehat{G}_c}$ (= Conway's smoothing the chord $\m{c}$) for every $\m{c} \in \m{a}_\parallel \cap \m{b}_\parallel \setminus \{\m{a},\m{b}\}$, it is sufficient to prove the statement in the case $\m{a}_\parallel \cap \m{b}_\parallel = \{\m{a},\m{b}\}$, \textit{i.e.,} the loops $\mathscr{C}(a)$, $\mathscr{C}(b)$ are non-self-intersecting loops (= the Jordan curves).

From Jordan curve Theorem, it follows that the loop $\mathscr{C}(a)$ divides the curve $\mathscr{C}$ into two regions, say, $\mathscr{I}$ and $\mathscr{O}$. Assume that $b\in \mathscr{O}$ and let us walk along the loop $\mathscr{C}(b)$. We say that an intersection point $c \in \mathscr{C}(a) \cap \mathscr{C}(b)$ is the entrance (\textit{resp.} the exit) if we shall be in the region $\mathscr{I}$ (\textit{resp.} $\mathscr{O}$) after meeting $c$ with respect to our walk. Since a number of entrances has to be equal to the number of exits, then $|\m{a}_\times \cap \m{b}_\times| \equiv 0\bmod 2$. Arguing similarly, we prove $|\m{c}_\times| \equiv 0 \bmod 2$ for every chord $\m{c}$.
\end{proof}

As an immediate consequence of Proposition \ref{prop1} we get the following.

\begin{corollary}[{\bf The Even Condition}]\label{strongeven}
  If a Gauss diagram is realizable then the number of all chords that cross a both of non-intersecting chords and every chord is even (including zero).
\end{corollary}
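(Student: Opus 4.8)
The plan is to observe that this corollary is simply a restatement of Proposition~\ref{prop1} once the word ``realizable'' is unwound. First I would recall that, by definition, a Gauss diagram $\m{G}$ is realizable precisely when there exists a plane curve $\mathscr{C}:S^1\to\mathbb{R}^2$ whose Gauss diagram is $\m{G}$. Fixing such a curve $\mathscr{C}$, the hypotheses of Proposition~\ref{prop1} are met for the pair $(\mathscr{C},\m{G})$, so both of its conclusions are available to us.

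Then I would just quote those conclusions in the two cases the corollary addresses. Part~(2) of Proposition~\ref{prop1} yields $|\m{c}_\times|\equiv 0\bmod 2$ for every chord $\m{c}\in\m{G}$, which is the ``every chord'' half of the assertion, and part~(1) yields $|\m{a}_\times\cap\m{b}_\times|\equiv 0\bmod 2$ for every pair of non-intersecting chords $\m{a},\m{b}\in\m{G}$, which is the ``both of two non-intersecting chords'' half. Since ``$\equiv 0\bmod 2$'' means exactly ``even (including zero)'', this is nothing other than the statement of the corollary.

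There is, properly speaking, no obstacle left in the corollary itself; all the work has already been done in Proposition~\ref{prop1}. The one point worth emphasizing in passing — a remark about that proposition rather than a difficulty here — is why its counting argument is legitimate: after reducing via Conway's smoothing (Proposition~\ref{RemConway}) to the situation in which the relevant loops $\mathscr{C}(a)$ and $\mathscr{C}(b)$ are simple closed curves, the Jordan curve Theorem partitions $\mathscr{C}$ into an inside region $\mathscr{I}$ and an outside region $\mathscr{O}$, and any traversal of $\mathscr{C}(b)$ must alternate between these two regions at the points of $\mathscr{C}(a)\cap\mathscr{C}(b)$, forcing the entrances and exits to pair off. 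Granting Proposition~\ref{prop1}, the corollary is immediate.
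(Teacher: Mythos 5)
Your proof is correct and matches the paper exactly: the paper derives this corollary as an immediate consequence of Proposition~\ref{prop1}, which is precisely your argument of unwinding ``realizable'' to fix a realizing curve $\mathscr{C}$ and then quoting both parts of that proposition. Nothing further is needed.
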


We conclude this subsection with an explanation why the even condition is not sufficient for realizability of Gauss diagrams.

Roughly speaking, from the proof of Proposition \ref{prop1} it follows that every plane curve can be obtained by attaching its loops to each other by given points. Conversely, if a Gauss diagram satisfies the even condition then it may be non-releasible. Indeed, when we attach a loop, say, $\mathscr{C}(b)$ to a loop $\mathscr{C}(a)$, where $\m{b} \in \m{a}_\parallel$, by given points (=elements of the set $\m{a}_\times \cap \m{b}_\times$) then the loop $\mathscr{C}(b)$ can be self-intersected curve, which means that we get new crossings (= virtual crossings), see {\sc Figure} \ref{rasdutyitrilistnik}.

To be more precisely, we have the following proposition.

\begin{proposition}\label{G=attach}
  Let $\m{G}$ be a non-realizable Gauss diagram which satisfies the even condition. Let $\m{G}$ defines a virtual plane curve $\mathscr{C}$ (up to virtual moves). There exist two non-intersecting chords $\m{a,b \in G}$ such that there are paths $c \to x \to d$, $e \to x \to f$ on a loop $\mathscr{C}(b)$, where $\m{c,d,e,f \in a_\times \cap b_\times}$ are different chords and $x$ is a virtual crossing of $\mathscr{C}$.
\end{proposition}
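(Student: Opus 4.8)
\medskip
\noindent\textbf{Plan of proof.} The strategy is to run the argument of Proposition~\ref{prop1} in reverse. Since $\m{G}$ is non-realizable, by Theorem~\ref{virt} it determines a virtual plane curve $\mathscr{C}$ every representative of which carries at least one virtual crossing; fix a representative (and, where convenient, a suitable chord $\m{b}$) with the fewest virtual crossings. Recall from the proof of Proposition~\ref{prop1} the basic dictionary: for any two non-intersecting chords $\m{a},\m{b}$ one may take the arcs $\m{a}_0\m{a}_1$ and $\m{b}_0\m{b}_1$ of the $C$-contours $C(\m{a})$, $C(\m{b})$ disjoint, and then the crossings of $\mathscr{C}$ lying simultaneously on the loops $\mathscr{C}(a)$ and $\mathscr{C}(b)$ are exactly the elements of $\m{a}_\times\cap\m{b}_\times$.

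The heart of the matter is to produce a pair of non-intersecting chords $\m{a},\m{b}$ and a virtual crossing $x$ of $\mathscr{C}$ which is visited twice by $\mathscr{C}(b)$ and divides it into two subarcs $\gamma_1,\gamma_2$, each a loop based at $x$ with $\gamma_1\cap\gamma_2=\{x\}$, such that \emph{both} $\gamma_1$ and $\gamma_2$ meet $\mathscr{C}(a)$. I would obtain this from non-realizability together with minimality, in the spirit of idea~(3) of the Introduction: $\m{G}$ being non-realizable, in the minimal representative some loop $\mathscr{C}(b)$ is forced to self-intersect at a point $x$ that corresponds to no chord of $\m{G}$, hence is virtual; if, for every $\m{a}\in\m{b}_\parallel\setminus\{\m{b}\}$, all crossings of $\mathscr{C}(a)\cap\mathscr{C}(b)$ lay on a single one of $\gamma_1,\gamma_2$, then the other subarc could be freed from all the loops $\mathscr{C}(a)$ and rerouted so as to remove $x$, decreasing the number of virtual crossings --- a contradiction; so some admissible $\m{a}$ works.

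Granting such $\m{a},\m{b},x$, the four chords drop out by the entrance/exit count of Proposition~\ref{prop1}. Since $x$ is virtual it is a double point of $\mathscr{C}$ both of whose branches lie on $\mathscr{C}(b)$, so $x\notin\mathscr{C}(a)$; hence each of the loops $\gamma_1,\gamma_2$ has its basepoint $x$ off $\mathscr{C}(a)$ and therefore crosses $\mathscr{C}(a)$ transversally an even number of times --- the parity of the number of crossings with $\mathscr{C}(a)$ already passed is exactly the ``side of $\mathscr{C}(a)$'' used in the proof of Proposition~\ref{prop1}, and it agrees at the two ends of $\gamma_i$, both equal to $x$. By the property of $\m{a}$ these two even numbers are both positive, hence $\geq 2$. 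Consequently, as we traverse $\mathscr{C}(b)$ we meet $x$ twice, and at each passage there is a crossing of $\mathscr{C}(a)\cap\mathscr{C}(b)$ immediately before $x$ and one immediately after $x$ (these exist because each of $\gamma_1,\gamma_2$ meets $\mathscr{C}(a)$). The four crossings so obtained --- two lying on $\gamma_1$ and two on $\gamma_2$ --- are pairwise distinct: the two on $\gamma_1$ differ because $\gamma_1$ carries $\geq 2$ crossings of $\mathscr{C}(a)$ (similarly for $\gamma_2$), while one on $\gamma_1$ differs from one on $\gamma_2$ since $\gamma_1\cap\gamma_2=\{x\}$ and $x\notin\mathscr{C}(a)$; and their chords $\m{c},\m{d},\m{e},\m{f}$ lie in $\m{a}_\times\cap\m{b}_\times$ by the dictionary of the first paragraph. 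After relabeling, this is precisely the configuration of paths $c\to x\to d$, $e\to x\to f$ on $\mathscr{C}(b)$ asserted in the statement.

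The step I expect to be the genuine obstacle is the middle one: converting ``$\m{G}$ is non-realizable'' into the concrete fact that some loop $\mathscr{C}(b)$ must carry a virtual self-crossing $x$ straddled, in the above sense, by $\mathscr{C}(a)$ for a suitable $\m{a}\in\m{b}_\parallel$ --- i.e., that the obstruction to planarity cannot be isotoped into a disc meeting no relevant loop. This is where the even condition (which guarantees that all the local parities, such as the one used in the third paragraph, add up) must be played off against the failure of global realizability; the first and third paragraphs are then essentially bookkeeping.
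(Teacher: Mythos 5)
Your proposal follows essentially the same route as the paper: view $\mathscr{C}$ as loops $\mathscr{C}(b)$ attached to $\mathscr{C}(a)$ along the crossings of $\m{a}_\times\cap\m{b}_\times$, observe that non-realizability forces some loop to acquire a virtual self-intersection $x$ that is not among those attachment points, and read off the two paths through $x$. The paper simply asserts the final step (existence and distinctness of the four neighbouring crossings $c,d,e,f$), whereas you justify it with your minimality-plus-parity argument; this fills in detail the paper leaves implicit rather than taking a genuinely different approach.
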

\begin{proof}
 Let $\m{a,b \in G}$ be two non-intersecting chords. Take non-intersecting $C$-contours $C(\m{a})$, $C(\m{b})$. Hence we may say that the loop $\mathscr{C}(b)$ attaches to the loop $\mathscr{C}(a)$ by the given points $p_1,\ldots,p_n $, where $\{\m{p_1,\ldots,p_n}\}= \m{a_\times \cap b_\times}$. Since $\m{G}$ is not realizable and satisfies the even condition then a virtual crossing may arise only as a self-intersecting point of, say, the loop $\mathscr{C}(b)$. Indeed, when we attach $\mathscr{C}(b)$ to $\mathscr{C}(a)$ by $p_1,\ldots,p_n $ we may get self-interesting points, say, $q_1,\ldots,q_m$ of the loop $\mathscr{C}(b)$. If $\m{G}$ contains all chords $\m{q_1},\ldots,\m{q}_m$ for every such chords $\m{a,b}$, then $\m{G}$ is realizable. Thus, a virtual crossing $x$ does not belong to $\{p_1,\ldots, p_n\} =  \mathscr{C}(a) \cap \mathscr{C}(b)$ for some non-intersecting chords $\m{a,b \in G}$. Then we get two paths $c \to x \to d$, $e \to x \to f$, where $c,d,e,f \in \mathscr{C}(a) \cap \mathscr{C}(b)$ are different chords, as claimed.
\end{proof}

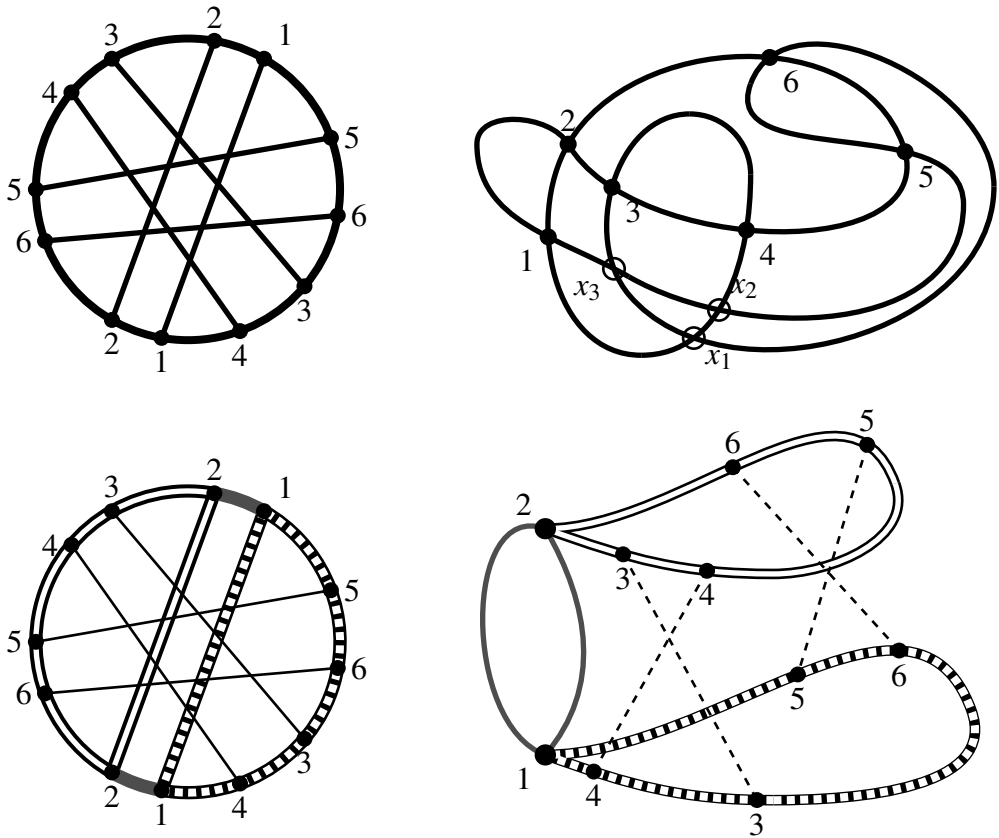
\begin{figure}[h!]
 \begin{center}
 \begin{tikzpicture}[line width =2]
    \draw[line width =3] (0,0) circle (2);
    \draw (60:2) -- (260:2);
    \draw (80:2) -- (240:2);
    \draw (120:2)--(320:2);
    \draw (140:2) -- (290:2);
    \draw (180:2) -- (20:2);
    \draw (200:2) -- (350:2);
    \fill (60:2) circle(3pt) node[above right] {$1$};
    \fill (80:2) circle(3pt) node[above] {$2$};
    \fill (120:2) circle(3pt) node[above] {$3$};
    \fill (140:2) circle(3pt) node[left] {$4$};
    \fill (180:2) circle(3pt) node[left] {$5$};
    \fill (200:2) circle(3pt) node[left] {$6$};
    \fill (240:2) circle(3pt) node[below] {$2$};
    \fill (260:2) circle(3pt) node[below] {$1$};
    \fill (290:2) circle(3pt) node[below] {$4$};
    \fill (320:2) circle(3pt) node[below] {$3$};
    \fill (350:2) circle(3pt) node[right] {$6$};
    \fill (20:2) circle(3pt) node[right] {$5$};
  \begin{scope}[xshift=5cm, yshift = 0.6cm, line width =2,scale = 0.8]
    \draw [name path= a] (0,0) to [out = 300, in = 290] (5.5,0);
    \draw[name path =a2](5.5,0) to [out = 110, in = 60] (0,0);
    \draw[name path = b1] (0,0) to [out = 240, in = 180](1.2,-3.5);
    \draw[name path =b2](1.2, -3.5) to [out = 0, in = 270] (3,-0.5);
    \draw[name path =c] (3,-0.5) to [out = 90, in = 0] (2,0.5);
    \draw[name path =c2](2,0.5) to [out = 180, in =130] (1,-2.5) to [out = 310, in = 270] (7,-0.7);
    \draw[name path =g1] (7,-0.7) to [out = 90, in = 70] (3, 1);
    \draw[name path =g2](3,1) to [out = 250, in = 90] (6.5, -1) to [out = 270, in = 330] (1,-2.2);
    \draw[name path =e] (1,-2.2) to [out = 150, in = 270] (-1.5,0) to [out =90, in = 120] (0,0);
\fill [name intersections={of=e and b1, by={1}}]
(1) circle (4pt) node[below left] {$1$};
\fill [name intersections={of=a and b1, by={2}}]
(2) circle (4pt) node[above] {$2$};
\fill [name intersections={of=a and b2, by={4}}]
(4) circle (4pt) node[below right] {$4$};
\fill [name intersections={of=a and c2, by={3}}]
(3) circle (4pt) node[below right] {$3$};
\fill [name intersections={of=a and g2, by={5}}]
(5) circle (4pt) node[below right] {$5$};
\fill [name intersections={of=a2 and g1, by={6}}]
(6) circle (4pt) node[below right] {$6$};
\draw [line width =1,name intersections={of=c2 and b2, by={x1}}]
(x1) circle (5pt) node[below right] {$x_1$};
\draw [line width =1,name intersections={of=g2 and b2, by={x2}}]
(x2) circle (5pt) node[above right] {$x_2$};
\draw [line width =1,name intersections={of=c2 and e, by={x3}}]
(x3) circle (5pt) node[below left] {$x_3$};
\end{scope}
\begin{scope}[yshift=-6cm]
  \draw[line width =2] (0,0) circle (2);
    \draw[line width =5] (60:2) -- (260:2);
    \draw[white, dotted, line width =3] (60:2) -- (260:2);
    \draw[line width =5] (80:2) -- (240:2);
    \draw[white] (80:2) -- (240:2);
    \draw[line width =1] (120:2)--(320:2);
    \draw[line width =1] (140:2) -- (290:2);
    \draw[line width =1] (180:2) -- (20:2);
    \draw[line width =1] (200:2) -- (350:2);
   \begin{scope}[rotate =60]
   \draw[ black!70, line width=5] (2,0) arc(0:20:2);
   \end{scope}
   \begin{scope}[rotate =80]
   \draw[line width=5] (2,0) arc(0:160:2);
   \draw[ white] (2,0) arc(0:160:2);
   \end{scope}
   \begin{scope}[rotate =240]
   \draw[black!70, line width=5] (2,0) arc(0:20:2);
   \end{scope}
   \begin{scope}[rotate =260]
   \draw[line width=5] (2,0) arc(0:160:2);
   \draw[white, dotted,line width=3] (2,0) arc(0:160:2);
   \end{scope}
    \fill (60:2) circle(3pt) node[above right] {$1$};
    \fill (80:2) circle(3pt) node[above] {$2$};
    \fill (120:2) circle(3pt) node[above] {$3$};
    \fill (140:2) circle(3pt) node[left] {$4$};
    \fill (180:2) circle(3pt) node[left] {$5$};
    \fill (200:2) circle(3pt) node[left] {$6$};
    \fill (240:2) circle(3pt) node[below] {$2$};
    \fill (260:2) circle(3pt) node[below] {$1$};
    \fill (290:2) circle(3pt) node[below] {$4$};
    \fill (320:2) circle(3pt) node[below] {$3$};
    \fill (350:2) circle(3pt) node[right] {$6$};
    \fill (20:2) circle(3pt) node[right] {$5$};
\begin{scope}[xshift = 4.7cm, yshift = 1.5cm]
  \draw[name path = l1,white] (0.3,1)--(3,-4);
  \draw[name path =l2,white] (3,1)--(0.2,-4);
  \draw[name path = l3,white] (2.3,1)--(5,-2);
  \draw[name path =l4,white] (4.5,2)--(3,-3);
  \draw [line width =4, name path= c1] (0,0) to [out = 340, in = 180] (3,-0.6) to [out =0, in = 290] (4.6,0.6);
  \draw [line width =4, name path= c2] (4.6, 0.6) to  [out =110, in = 0] (0,0);
  \draw [white] (0,0) to [out = 340, in = 180] (3,-0.6) to [out =0, in = 290] (4.6,0.6);
  \draw [white] (4.6, 0.6) to  [out =110, in = 0] (0,0);
  \draw [line width =4, name path =o1] (0,-3) to [out = 340, in = 180] (3,-3.6);
  \draw [line width =4, name path =o2] (3,-3.6) to [out =0, in = 290] (5.6,-2.4) to  [out =110, in = 0] (0,-3);
   \draw[white, dotted, line width =3] (0,-3) to [out = 340, in = 180] (3,-3.6);
  \draw [white, dotted, line width =3] (3,-3.6) to [out =0, in = 290] (5.6,-2.4) to  [out =110, in = 0] (0,-3);
  \draw[line width =2,black!70] (0,0) [out = 160, in = 160] to (0,-3);
  \draw[line width =2, black!70] (0,0) [out = 300, in = 50] to (0,-3);
    \fill [name intersections={of=c1 and l1, by={3u}}]
 (3u) circle (3pt) node[below] {$3$};
    \fill [name intersections={of=c1 and l2, by={4u}}]
 (4u) circle (3pt)node [below] {$4$};
    \fill [name intersections={of=o1 and l2, by={4d}}]
 (4d) circle (3pt)node [below] {$4$};
    \fill [name intersections={of=o1 and l1, by={3d}}]
 (3d) circle (3pt)node [below] {$3$};
  \fill [name intersections={of=c2 and l4, by={5u}}]
 (5u) circle (3pt) node[above] {$5$};
 \fill [name intersections={of=o2 and l4, by={5d}}]
 (5d) circle (3pt) node[below] {$5$};
 \fill [name intersections={of=c2 and l3, by={6u}}]
 (6u) circle (3pt) node[above] {$6$};
 \fill [name intersections={of=o2 and l3, by={6d}}]
 (6d) circle (3pt) node[below] {$6$};
 \draw[line width =1,dashed] (3u)--(3d);
 \draw[line width =1,dashed] (4u)--(4d);
 \draw[line width =1,dashed] (5u)--(5d);
 \draw[line width =1,dashed] (6u)--(6d);
 \fill (0,0) circle (4pt) node [above left] {$2$};
 \fill (0,-3) circle (4pt) node[below left] {$1$};
\end{scope}
\end{scope}
\end{tikzpicture}
  \end{center}
\caption{It shows that the even condition is not sufficient for realizability of the Gauss diagrams. We see that the plane curve can be obtained by attaching the white-black loop to the dotted one by the points $3,4,5,6$, and thus the dotted loop has to have ``new'' crossings (= self-intersections) $x_1,x_2,x_3$.}\label{rasdutyitrilistnik}
\end{figure}

\subsection{The Sufficient Condition}

\begin{definition}
  Let $\m{G}$ be a Gauss diagram (not necessarily realizable) and $X(\m{a,b})$ its $X$-contour. Take the $X$-contour coloring of $\m{G}$. A chord of $\m{G}$ is called \textit{colorful for $X(\m{a,b})$} if its endpoints are in arcs which have different colors.
\end{definition}

Similarly, one can define \textit{a colorful chord for a $C$-contour} $C(\m{a})$ of $\m{G}$.

\begin{example}
Let us consider the Gauss diagram, which is shown in {\sc Figure} \ref{d}. One can easy check that this Gauss diagram is not realizable. Let us consider the orange $X$-contour $X(1,3)$ and the $X(1,3)$-coloring of $\m{G}$. The chord with the endpoints $5$ is colorful for the $X$-contour $X(1,3)$. It is interesting to consider the corresponding coloring of the virtual plane curve: one can think that we forget to change color when we cross the gray loop, \textit{i.e.,} the gray loop ``does not divide'' the curve into two parts. We shall show that this observation is typical for every non-realizable Gauss diagram.
\end{example}

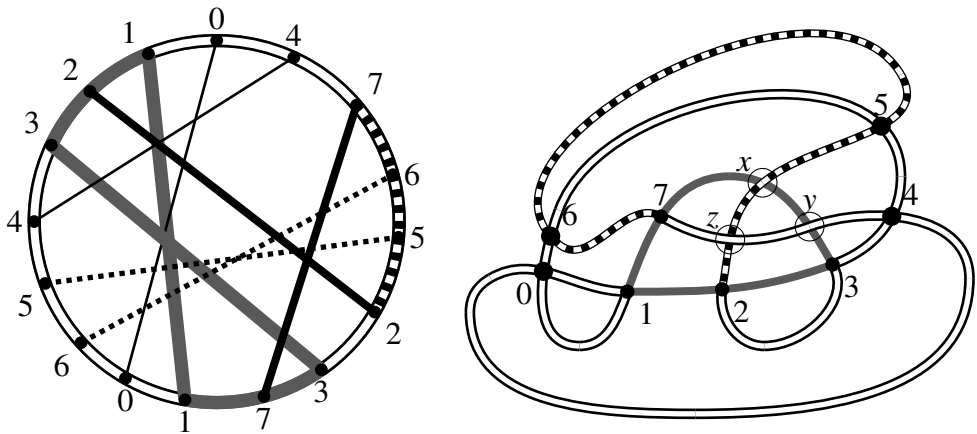
\begin{figure}[h!]
 \begin{tikzpicture}
\begin{scope}[yshift=-0.75cm,xshift=4.5cm,scale=0.9]
  \draw [line width = 4,name path =v1] (-1,0) to [out = 30, in = 180] (1,-0.2);
  \draw [line width = 2, white] (-1,0) to [out = 30, in = 180] (1,-0.2);
  \draw[line width =3, name path= o1,black!65] (1,-0.2) to [out =0, in = 200] (4,0.2);
  \draw [line width = 4, name path= a] (4,0.2) to [out =20 , in = 270] (5,1.5);
  \draw [line width = 2, white] (4,0.2) to [out =20 , in = 270] (5,1.5);
  \draw [line width = 4, name path= b] (5,1.5) to [out = 90, in = 70] (0,1);
  \draw [line width = 2, white] (5,1.5) to [out = 90, in = 70] (0,1);
  \draw [line width = 4, name path =c] (0,1) to [out = 250, in = 180] (0.3,-1);
  \draw [line width = 2, white] (0,1) to [out = 250, in = 180] (0.3,-1);
  \draw [line width = 4] (0.3,-1) to [out = 0, in = 250] (1,-0.2);
  \draw [line width = 2, white] (0.3,-1) to [out = 0, in = 250] (1,-0.2);
  \draw [line width = 3, name path= o2,black!65] (1,-0.2) to [out = 70, in=180] (2.5,1.5);
  \draw [line width = 3, name path= o3,black!65] (2.5,1.5) to [out = 0, in = 120] (4,0.2);
  \draw [line width =4] (4,0.2) to [out = 300, in = 0] (3,-1);
  \draw [line width =2, white] (4,0.2) to [out = 300, in = 0] (3,-1);
  \draw [name path= w, white] (3,-1) to [out = 180, in = 240] (2.5,0.3);
  \fill [name intersections={of=o1 and w, by={T4}}]
 (T4) circle (1pt);
 \draw [name path= h2, white] (T4) to (3.2,2);
  \fill [name intersections = {of =o3 and h2, by ={T2}}]
 (T2) circle (1pt);
 \draw [name path= h3, white] (T4) to (4,1);
  \fill [name intersections = {of =o3 and h3, by ={T3}}]
 (T3) circle (1pt);
 \draw [name path= h4, white] (T4) to (1,1.5);
  \fill [name intersections = {of =o2 and h4, by ={T1}}]
 (T1) circle (1pt);
\draw [line width =4] (3,-1) to [out = 180, in = 260] (T4);
\draw [line width =2, white] (3,-1) to [out = 180, in = 260] (T4);
\draw [line width =3,name path=y1] (T4) to [out = 80, in = 220] (T2);
\draw [line width =2,white, dashed] (T4) to [out = 80, in = 220] (T2);
\draw [line width =3, name path = d] (T2) to [out = 40, in = 300] (5,3);
\draw [line width =2, white, dashed] (T2) to [out = 40, in = 300] (5,3);
\draw [line width =3, name path = d1] (5,3) to [out = 120, in = 140] (0,0.5);
\draw [line width =2, white, dashed] (5,3) to [out = 120, in = 140] (0,0.5);
\draw [line width =3,name path=v2] (0,0.5) to [out = 320, in = 150] (T1);
\draw [line width =2,white, dashed] (0,0.5) to [out = 320, in = 150] (T1);
\draw [line width =4, name path = y2] (T1) to [out = 330, in = 200] (T3);
\draw [line width =2, white] (T1) to [out = 330, in = 200] (T3);
\draw [line width =4, name path = d2] (T3) to [out = 20, in = 90] (6,0);
\draw [line width =2, white] (T3) to [out = 20, in = 90] (6,0);
\draw [line width =4] (6,0) to [out = 270, in = 0] (2,-2);
\draw [line width =2, white] (6,0) to [out = 270, in = 0] (2,-2);
\draw [line width =4] (2,-2) to [out = 180, in = 270] (-1.3,-0.5);
\draw [line width =2, white] (2,-2) to [out = 180, in = 270] (-1.3,-0.5);
\draw [line width =4] (-1.3, -0.5) to [out = 90, in = 210] (-1,0);
\draw [line width =2, white] (-1.3, -0.5) to [out = 90, in = 210] (-1,0);
\fill (T1) circle (3pt) node [above] {$7$};
\draw (T2) circle (6pt) node [above left] {$x$};
\draw (T3) circle (6pt) node [above] {$y$};
\fill (T4) circle (3pt) node [below right] {$2$};
 \draw [name intersections = {of =y1 and y2, by ={I}}]
 (I) circle (6pt) node [above left] {$z$};
  \fill (1,-0.2) circle (3pt) node [below right] {$1$};
  \fill (4,0.2) circle (3pt) node [below right] {$3$};
 \fill [name intersections = {of =v1 and c, by ={a1}}]
 (a1) circle (4pt) node [below left] {$0$};
 \fill [name intersections = {of =a and d2, by ={a2}}]
 (a2) circle (4pt) node [above right] {$4$};
  \fill [name intersections = {of =b and d, by ={a3}}]
 (a3) circle (4pt) node [above] {$5$};
  \fill [name intersections = {of =d1 and c, by ={a4}}]
 (a4) circle (4pt) node [above right] {$6$};
  \end{scope}
  \begin{scope}[scale = 0.8]
    \draw[line width =5, black!65] (0,0) circle (3);
   \begin{scope}[rotate =112]
   \draw[line width =5, black!65] (3,0) arc(0:43:3);
   \end{scope}
  \begin{scope}[rotate =260]
   \draw[line width=5, black!65] (3,0) arc(0:45:3);
   \end{scope}
  \begin{scope}[rotate =155]
   \draw[line width=5, black] (3,0) arc(0:105:3);
   \draw[line width=3, white] (3,0) arc(0:105:3);
  \end{scope}
  \begin{scope}[rotate =305]
   \draw[line width=5] (3,0) arc(0:25:3);
   \draw[line width=3, white] (3,0) arc(0:25:3);
   \end{scope}
  \begin{scope}[rotate =330]
   \draw[line width=5] (3,0) arc(0:70:3);
   \draw[line width=3, white, dashed] (3,0) arc(0:70:3);
   \end{scope}
  \begin{scope}[rotate =40]
   \draw[line width=5] (3,0) arc(0:72:3);
   \draw[line width=3, white] (3,0) arc(0:72:3);
   \end{scope}
   \draw[line width =5, black!65] (112:3) --(260:3);
   \draw[line width =1] (90:3) -- (240:3);
   \draw[line width =5, black!65] (155:3) --(305:3);
   \draw[line width=3] (134:3) -- (330:3);
   \draw[line width=2, dotted] (200:3) --(355:3);
   \draw[line width=2,dotted] (222:3) -- (15:3);
   \draw[line width =3] (285:3) --(40:3);
   \draw[line width=1] (180:3)--(65:3);
    \fill(90:3) circle(3pt) node[above] {$0$};
    \fill(112:3) circle(3pt) node[above left] {$1$};
    \fill(134:3) circle(3pt) node[above left] {$2$};
    \fill(155:3) circle(3pt) node[above left] {$3$};
    \fill(180:3) circle(3pt) node[left] {$4$};
    \fill(200:3) circle(3pt) node[below left] {$5$};
    \fill(222:3) circle(3pt) node[below left] {$6$};
    \fill(240:3) circle(3pt) node[below] {$0$};
    \fill(260:3) circle(3pt) node[below] {$1$};
    \fill(285:3) circle(3pt) node[below] {$7$};
    \fill(305:3) circle(3pt) node[below] {$3$};
    \fill(330:3) circle(3pt) node[below right] {$2$};
    \fill(355:3) circle(3pt) node[right] {$5$};
    \fill(15:3) circle(3pt) node[right] {$6$};
    \fill(40:3) circle(3pt) node[above right] {$7$};
    \fill(65:3) circle(3pt) node[above] {$4$};
  \end{scope}
  \end{tikzpicture}
\caption{This Gauss diagram satisfies even condition but is non-realizable. There are colorful chords (\textit{e.g.} the chord with endpoints $5$).}\label{d}
\end{figure}

We have seen that if a Gauss diagram is realizable then there is no colorful chord, with respect to every $X$-contour. We shall show that it is sufficient condition for realizability of a Gauss diagram.

\begin{proposition}\label{propaboutX}
Let $\m{G}$ be a non-realizable Gauss diagram but satisfy the even condition. Then there exists an $X$-contour and a colorful chord for this $X$-contour.
\end{proposition}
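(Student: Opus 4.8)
The plan is to turn the virtual crossing supplied by the Goussarov--Polyak--Viro theorem into an explicit obstruction to the consistency of a suitable $X$-contour coloring.

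\textbf{Step 1 (produce the defect and read off its combinatorics).} Since $\m{G}$ is non-realizable, Theorem~\ref{virt} lets me fix a virtual plane curve $\mathscr{C}$ determined by $\m{G}$, and since $\m{G}$ satisfies the even condition, Proposition~\ref{G=attach} supplies two non-intersecting chords $\m{a},\m{b}\in\m{G}$, a virtual crossing $x$ of $\mathscr{C}$, and two arcs $c\to x\to d$ and $e\to x\to f$ of the loop $\mathscr{C}(b)$ with $\m{c},\m{d},\m{e},\m{f}\in\m{a}_\times\cap\m{b}_\times$ pairwise distinct. I would first record the cyclic picture: reading $\mathscr{C}(b)$ off the arc $\m{b}_0\m{b}_1$, the two occurrences of $x$ split $\mathscr{C}(b)$ into two sub-arcs, one carrying $c$ and $d$ near its ends, the other carrying $e$ and $f$, with $b$ lying in exactly one of the two long gaps; thus on the arc $\m{b}_0\m{b}_1$ one endpoint of each of $\m{c},\m{d},\m{e},\m{f}$ appears, interleaved with the two preimages of $x$. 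Moreover, since $\m{a},\m{b}$ are non-intersecting one may take $C(\m{a}),C(\m{b})$ with disjoint arcs, so the second endpoints of $\m{c},\m{d},\m{e},\m{f}$ all lie on the complementary arc of $\m{b}$, inside $\m{a}_0\m{a}_1$.

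\textbf{Step 2 (choose the $X$-contour).} Using the pattern of Step~1 I would isolate an intersecting pair $\m{u},\m{v}$ among $\{\m{a},\m{b},\m{c},\m{d},\m{e},\m{f}\}$ for which $X(\m{u},\m{v})$ is non-degenerate and whose associated $\mathscr{X}$-contour $\mathscr{X}(u,v)$ passes through the virtual crossing $x$, that is, at least one preimage of $x$ lies on one of the two arcs of $X(\m{u},\m{v})$. Each of $\m{c},\m{d},\m{e},\m{f}$ crosses $\m{b}$, so pairs such as $X(\m{b},\m{d})$ or $X(\m{b},\m{e})$ are admissible candidates, and the placement of the preimages of $x$ recorded in Step~1 shows that a suitable one of these arcs picks up a preimage of $x$. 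When a first candidate degenerates (contains every chord, or has no door chord) one falls back on another admissible pair among the six chords, whose existence the structure of Step~1 guarantees.

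\textbf{Step 3 (the coloring fails).} Now run the $X(\m{u},\m{v})$-coloring of Definition~\ref{coloring} on $\m{G}$ and interpret it on $\mathscr{C}$ as the $\mathscr{X}(u,v)$-coloring: a color change is performed exactly at the door crossings, i.e.\ exactly where the uncolored part of $\mathscr{C}$ meets $\mathscr{X}(u,v)$ transversally. I would argue by contradiction. If no chord of $\m{G}$ were colorful for $X(\m{u},\m{v})$, the coloring would be compatible with every chord of $\m{G}$, hence it would two-colour $\mathscr{C}$ consistently ``across'' $\mathscr{X}(u,v)$, a Jordan-type separation. But $x$ is an honest transverse self-crossing of $\mathscr{C}$ lying on $\mathscr{X}(u,v)$ and carries no chord of $\m{G}$, so along the $\mathscr{C}$-strand through $x$ that is not part of $\mathscr{X}(u,v)$ the bookkept color changes undercount the actual transverse meetings with $\mathscr{X}(u,v)$ by one. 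Following the two strands emanating from $x$ until they first reach endpoints of genuine chords, and counting ``entrances versus exits'' along $\mathscr{X}(u,v)$ exactly as in the proof of Proposition~\ref{prop1}, this parity mismatch cannot be absorbed and must surface on some genuine chord whose endpoints receive different colors, which is the desired colorful chord.

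\textbf{Main obstacle.} The heart of the argument is the bookkeeping of Steps~2--3: pinning down which intersecting pair $\m{u},\m{v}$ yields a non-degenerate $X$-contour that actually ``sees'' $x$, and then proving rigorously that the single missing color change at $x$ is forced to appear as a colorful chord rather than being cancelled by another virtual crossing or hidden inside the contour. I expect this to demand a finite case analysis driven by the cyclic order recorded in Step~1, together with the entrances-equal-exits parity count already used for Proposition~\ref{prop1}, now applied to the possibly self-intersecting curve $\mathscr{X}(u,v)$.
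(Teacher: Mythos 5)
Your overall strategy coincides with the paper's: invoke Theorem~\ref{virt} and Proposition~\ref{G=attach} to locate a virtual crossing $x$ together with the chords of $\m{a}_\times\cap\m{b}_\times$ surrounding it, build an $X$-contour whose associated $\mathscr{X}$-contour passes through $x$, and conclude that the absent color change at the virtual door $x$ forces a colorful chord. However, the two places you flag as remaining work are precisely where the paper's proof does its real work, and at least one of your claims there is not right as stated. In Step~2 you assert that a non-degenerate $X$-contour through $x$ can always be found among intersecting pairs drawn from the six chords supplied by Proposition~\ref{G=attach}, ``whose existence the structure of Step~1 guarantees.'' The paper shows this is not automatic: it takes the two real crossings immediately adjacent to $x$ along the curve (so that the arc between their chord-endpoints contains exactly one preimage of $x$ and nothing else) and splits into two cases. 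When the corresponding chords intersect, their $X$-contour works; when they do not, the paper must walk along the circle to find an auxiliary chord $\widetilde{\m{o}}$ --- which need not be among your six chords at all --- and then gives a separate argument, via the configuration of {\sc Figure}~\ref{last2}, that the resulting $X$-contour is non-degenerate, since otherwise the path could be rerouted to avoid $x$ entirely. Nothing in your Step~1 bookkeeping substitutes for this case analysis, and ``falling back on another admissible pair among the six'' can fail.

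Second, your Step~3 is the right idea but not yet a proof. The paper first reduces to the case where the $\mathscr{X}$-contour is a simple closed curve, by Conway-smoothing (Proposition~\ref{RemConway}) every chord whose endpoints both lie on one arc of the contour, which leaves the coloring unchanged; only then does the Jordan curve theorem give a genuine inside/outside two-coloring against which the combinatorial coloring can be compared, and the discrepancy introduced at the virtual door $x$ exhibits a colorful chord. Your proposed parity count of entrances versus exits along a possibly self-intersecting $\mathscr{X}(u,v)$ does not obviously deliver this, and you yourself name it the main obstacle. As it stands the proposal is an outline of the paper's argument with its two essential steps left open.
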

\begin{proof}
By Theorem \ref{virt}, $\m{G}$ defines a virtual curve $\mathscr{C}$ (= the shadow of a virtual knot diagram) up to virtual moves. Starting from a crossing, say, $o$, let us walk along $\mathscr{C}$ till we meet the first virtual crossing, say, $x$. Next, let us keep walking in the same direction till we meet the first real crossing, say $d$. Denote this path by $\mathscr{P}$. Just for convinces, let us put the labels, say, $x_0,x_1$ of the virtual crossing $x$ on the circle of $\m{G}$ in the order we meet them on $\mathscr{P}$.

We can take two real crossings, say, $a,b$ such that the path $a \to x \to b$ does not contain another real crossings and $\m{a,b} \in \m{o}_\times.$ Indeed, from Proposition \ref{G=attach} it follows that for a chord $\m{o}\in \m{G}$ we can find $n \ge 1$ chords $\m{o}^1,\ldots,\m{o}^n$ such that, for every $1\le i \le n$, we have: (1) the chords $\m{o}, \m{o}^i$ do not intersect, (2) the loop $\mathscr{C}(o)$ contains the following paths $a^i \to x \to b^i$, $c^i \to x \to d^i$ where $\m{a}^i,\m{b}^i,\m{c}^i,\m{d}^i \in \m{o}_\times \cap \m{o}^i_\times$ are different chords. Thus, for some $1\le i,j \le n$ we have an arc, say, $\m{a}^i_1\m{b}^j_1$ contains only one of $x_0$ or $x_1$, no endpoints of another chords, and $\m{a}^i,\m{b}^j \in \m{o}_\times$. Denote this arc by $\m{a_1b_1}$ and assume that $x_0$ lies on $\m{a_1b_1}$.

We next construct an $X$-contour contains the arc $\m{a_1b_1}$. To do so, we have to consider the following two cases:

\textit{Case 1.} The chords $\m{a,b}$ intersect.

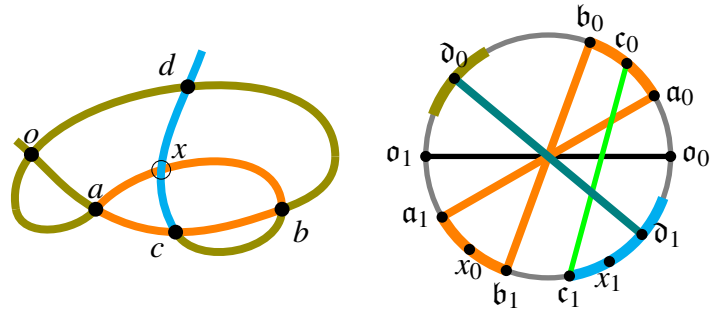
\begin{figure}[h!]
 \begin{tikzpicture}[scale =0.7]
  \draw[line width =3, orange, name path = a] (1.5,0) to [out = 330, in = 200] (5,0);
 \draw[white, name path = a1] (3,-1) -- (3,0);
  \draw [name intersections = {of =a and a1, by ={C}}] (C) circle (2pt) node[above] {};
  \draw[line width =3, olive, name path = b] (1.5,0) to [out = 150, in = 320] (0,1.3);
  \draw[line width =3, olive, name path = c] (5,0) to [out = 20, in = 270] (6,1);
  \draw[line width =3, olive, name path = d] (6,1) to [out = 90, in = 100] (0,0);
  \draw[line width =3, olive, name path = e] (0,0) to [out = 280, in = 230] (1.5,0);
  \draw[line width =3, orange, name path = f] (1.5,0) to [out = 50, in = 90] (5,0);
  \draw[line width =3, olive] (5,0) to [out = 270, in = 300] (C);
  \draw[line width =3, cyan, name path = g] (C) to [out = 120, in = 250] (3.5,3);
  \fill (1.5,0) circle (4pt) node [above] {$a$};
  \fill (5,0) circle (4pt) node [below right] {$b$};
  \fill (C) circle (4pt) node [below left] {$c$};
  \draw [name intersections = {of =g and f, by ={X}}]
 (X) circle (5pt) node [above right] {$x$};
 \fill [name intersections = {of =d and g, by ={D}}]
 (D) circle (4pt) node [above left] {$d$};
 \fill [name intersections = {of =d and b, by ={O}}]
 (O) circle (4pt) node [above] {$o$};
\begin{scope}[xshift = 10cm, yshift = 1cm]
 \draw[line width =2,gray] (0,0) circle (2.3);
\begin{scope}[rotate =30]
   \draw[orange, line width=4] (2.3,0) arc(0:40:2.3);
\end{scope}
\begin{scope}[rotate =120]
   \draw[olive, line width=4] (2.3,0) arc(0:40:2.3);
\end{scope}
\begin{scope}[rotate =280]
   \draw[cyan, line width=4] (2.3,0) arc(0:60:2.3);
\end{scope}
\begin{scope}[rotate =210]
   \draw[orange, line width=4] (2.3,0) arc(0:40:2.3);
\end{scope}
\draw[line width =2] (0:2.3) -- (180:2.3);
  \fill (0:2.3) circle (3pt) node [right] {$\m{o}_0$};
  \fill (180:2.3) circle (3pt) node [left] {$\m{o}_1$};
\draw[line width =3, orange] (30:2.3) -- (210:2.3);
  \fill (30:2.3) circle (3pt) node [right] {$\m{a}_0$};
  \fill (210:2.3) circle (3pt) node [left] {$\m{a}_1$};
\draw[line width =2, green] (50:2.3) -- (280:2.3);
  \fill (50:2.3) circle (3pt) node [above] {$\m{c}_0$};
  \fill (280:2.3) circle (3pt) node [below] {$\m{c}_1$};
\draw[line width =3, orange] (70:2.3) -- (250:2.3);
  \fill (70:2.3) circle (3pt) node [above] {$\m{b}_0$};
  \fill (250:2.3) circle (3pt) node [below] {$\m{b}_1$};
\draw[line width =3, teal] (320:2.3) -- (140:2.3);
  \fill (320:2.3) circle (3pt) node [right] {$\m{d}_1$};
  \fill (140:2.3) circle (3pt) node [above] {$\m{d}_0$};
\fill (230:2.3) circle (3pt) node [below] {$x_0$};
\fill (300:2.3) circle (3pt) node [below] {$x_1$};
\end{scope}
\end{tikzpicture}
\caption{Since the curve $\mathscr{C}$ is determined up to virtual moves then the cyan line has to cross the olive line.}\label{degen1}
\end{figure}

Take the $X$-contour $X(\m{a,b})$ which contains the arc $\m{a_1b_1}$. It is easy to see that this $X$-contour has at least one real door chord, because its another arc $\m{a_0b_0}$ has no virtual crossing thus it has to have at least one real crossing (see {\sc Figure \ref{degen1}}).

\textit{Case 2.} The chords $\m{a,b}$ do not intersect.

\begin{figure}[h!]
 \begin{center}
\begin{tikzpicture}[scale = 0.8]
\begin{scope}
    \draw[line width =3, orange, name path = a] (1.5,0) to [out = 330, in = 200] (5,0);
 \draw[white, name path = a1] (3.5,-1) -- (3.5,0.5);
  \draw [name intersections = {of =a and a1, by ={C}}] (C) circle (2pt) node[above] {};
  \draw[line width =3, olive, name path = b] (1.5,0) to [out = 150, in = 320] (0,1.3);
  \draw[line width =3, olive, name path = c] (5,0) to [out = 20, in = 270] (6,1);
  \draw[line width =3, olive, name path = d] (6,1) to [out = 90, in = 100] (0,0);
  \draw[line width =3, olive, name path = e] (0,0) to [out = 280, in = 230] (1.5,0);
  \draw[line width =3, orange, name path = f] (1.5,0) to [out = 50, in = 90] (5,0);
 \draw[white, name path = a2] (2.3,0) -- (2.3,1);
 \draw [name intersections = {of =f and a2, by ={A}}] (A) circle (2pt) node[above] {};
  \draw[line width =3, olive] (5,0) to [out = 270, in = 300] (C);
  \draw[line width =3, cyan, name path = g] (C) to [out = 120, in = 250] (3.5,3);
  \fill (1.5,0) circle (3pt) node [above] {$\widetilde{o}$};
  \fill (5,0) circle (3pt) node [below right] {$b$};
  \fill (C) circle (3pt) node [below left] {};
  \draw [name intersections = {of =g and f, by ={X}}]
 (X) circle (5pt) node [above right] {$x$};
 \fill [name intersections = {of =d and g, by ={D}}]
 (D) circle (3pt) node [above left] {};
 \fill [name intersections = {of =d and b, by ={O}}]
 (O) circle (3pt) node [above] {$o$};
\fill (A) circle (3pt) node [above] {$a$};
\end{scope}
\begin{scope}[line width =2 ,xshift = 10cm,yshift=1cm]
  \draw[gray, line width =2.5] (0,0) circle (2);
\begin{scope}[rotate =35]
   \draw[orange, line width=4] (2,0) arc(0:25:2);
   \end{scope}
\begin{scope}[rotate =200]
   \draw[orange, line width=4] (2,0) arc(0:60:2);
   \end{scope}
   \draw[cyan] (20:1.5) -- (340:2);
   \fill(340:2) circle(3pt) node[right] {$\m{d}_1$};
   \draw[teal]  (30:1) -- (300:2);
   \fill(320:2) circle(3pt) node[below] {$x_1$};
   \fill(300:2) circle(3pt) node[below] {$\m{c}_1$};
   \draw  (10:2) -- (180:2);
   \fill(10:2) circle(3pt) node[right] {$\m{o}_0$};
   \fill(180:2) circle(3pt) node[left] {$\m{o}_1$};
\draw [line width = 3, orange] (35:2) -- (200:2);
   \fill(35:2) circle(3pt) node[right] {$\widetilde{\m{o}}_0$};
   \fill(200:2) circle(3pt) node[left] {$\widetilde{\m{o}}_1$};
\draw[line width =3, orange]  (60:2) -- (260:2);
   \fill(60:2) circle(3pt) node[above right] {$\m{b}_0$};
   \fill(260:2) circle(3pt) node[below] {$\m{b}_1$};
\draw[green]  (170:1.3) -- (220:2);
   \fill(220:2) circle(3pt) node[below] {$\m{a}_1$};
  \fill (240:2) circle(3pt) node[below] {$x_0$};
\end{scope}
 \end{tikzpicture}
\end{center}
\caption{The ``general'' position of chords $\m{a,b,c,d}$ is shown. Our walk along the path $\mathscr{P}$ correspondences to $\m{o_0 \to \widetilde{o}_0 \to b_0\to \cdots \to \m{d_1}}$.}\label{last}
\end{figure}

Let us walk along the circle of $\m{G}$ in the direction $\m{b_1} \to x_0 \to \m{a}_1$ ({\sc Figure} \ref{last}) till we meet the first chord, say, $\widetilde{\m{o}}$ such that $\m{a,b,o} \in \widetilde{\m{o}}_\times$. If we cannot find such chord we set $\widetilde{\m{o}}:=\m{o}$. Take the $X$-contour $X(\widetilde{\m{o}},\m{b})$ contains the arc $\m{a_1b_1}$. This $X$-contour is non-degenerate. Indeed, let $\widetilde{\m{o}}:=\m{o}$ and endpoints of all chords, which start in the arc $\m{b_1}x_1\m{d}_1$, lie on the arc $\m{o}_0\m{b_0}$. We then get the situation is shown in {\sc Figure} \ref{last2}.
\begin{figure}[h!]
  \begin{tikzpicture}[scale = 0.8]
    \draw[line width =3, orange, name path = a] (0,0) to [out = 330, in = 200] (5,0);
    \draw[line width =3, orange, name path = f] (0,0) to [out = 50, in = 90] (5,0);
    \draw[line width =3, olive] (5,0) to [out = 20, in = 200] (5.5,0.2);
    \draw[line width =3, olive] (0,0) to [out = 150, in = 320] (-0.3,0.2);
    \draw[line width =3, olive] (-0.2,-0.3) to [out = 60, in = 230] (0,0);
 \draw[white, name path = a1] (3.5,-1) -- (3.5,0.5);
 \draw [name intersections = {of =a and a1, by ={C}}] (C) circle (2pt) node[above] {};
  \draw[white, name path = a2] (1.5,-1) -- (1.5,0.5);
  \draw [name intersections = {of =a and a2, by ={D}}] (D) circle (2pt) node[above] {};
   \draw[line width =3, olive] (5,0) to [out = 270, in = 300] (C);
   \draw[line width =3, cyan, name path = g] (C) to [out = 120, in = 180] (5,2);
   \draw[line width =3,cyan] (5,2) to [out = 0, in = 90] (6,0.5);
   \draw[line width =3, cyan] (6,0.5) to [out = 270, in = 270] (D);
   \draw[line width =3, cyan] (D) to [out = 90, in = 240] (1.7,0.1);
  \draw[line width =2, blue, dotted] (C) to [out = 90, in = 90] (D);
  \draw [name intersections = {of =g and f, by ={X}}]
   (X) circle (5pt) node [above left] {$x$};
 \fill (C) circle (3pt) node [below left] {$c$};
 \fill (D) circle (3pt) node [below left] {$d$};
 \fill (0,0) circle (3pt) node [above] {$o$};
 \fill (5,0) circle (3pt) node [below right] {$b$};
\end{tikzpicture}
\caption{The dotted blue line gives another path from $c$ to $d$.}\label{last2}
\end{figure}
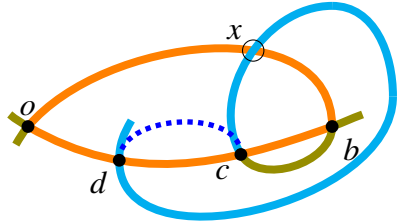

It follows that we can take another path $\mathscr{P}$ without the virtual crossing $x$. This contradiction implies that the $X$-contour $X(\m{o,b})$ is non-degenerate.

So, we have a non-degenerate $\mathscr{X}$-contour of the curve $\mathscr{C}$ such that $x$ is its virtual door. If the $X$-contour of $\m{G}$ contains another chords such that all their endpoints lie on an arc of this $X$-contour, then Proposition \ref{RemConway} implies that Conway's smoothing all such chords does not change the $\mathscr{X}$-contour coloring of $\mathscr{P}$. Thus we may assume that this $\mathscr{X}$-contour is the closed curve without self-intersections. Hence, by Jordan curve Theorem, it divides $\mathscr{P}$ into two parts. Let us color these parts into two different colors. If the crossing $x$ would be a real door of this $\mathscr{X}$-contour, then by Definition \ref{coloring}, we can take such $\mathscr{X}$-contour coloring of $\mathscr{P}$ as before. But since $x$ is not real door then after meeting it at the second time we do not change the color and thus we get an intersection point of two lines which have different colors, \textit{i.e.,} the corresponding chord is colorful. This completes the proof.
\end{proof}

\begin{lemma}\label{prop1}
  Let $\m{G}$ be a Gauss diagram. Consider a $C(\m{a})$-contour coloring of $\m{G}$ for some chord $\m{a\in G}$. If there exists a colorful chord for the $C$-contour $C(\m{a})$ then the diagram $\m{G}$ does not satisfy the even condition.
\end{lemma}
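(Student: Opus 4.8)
The plan is to convert the word ``colorful'' into a parity statement about crossing numbers and then contradict the Even Condition (Corollary \ref{strongeven}). First I would fix notation. The chord $\m{a}$ cuts the circle of $\m{G}$ into the chosen arc $I=\m{a}_0\m{a}_1$, which is part of the $C$-contour $C(\m{a})$ and is therefore left uncolored, and the complementary arc $J=\m{a}_1\m{a}_0$, whose little sub-arcs (the components of $J$ with all chord endpoints removed) carry the two colors. By definition the door chords of $C(\m{a})$ are exactly the chords in $\m{a}_\times$; each of them has precisely one endpoint on $I$ and one endpoint on $J$, and by the $C$-contour analogue of Definition \ref{coloring} the color is switched exactly when we pass a $J$-endpoint of such a chord. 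Consequently two sub-arcs of $J$ receive the same color if and only if the number of door-endpoints lying strictly between them along $J$ is even.

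Next I would pin down where a colorful chord can live. Since ``colorful'' requires both endpoints to sit in (differently) colored sub-arcs, a colorful chord $\m{d}$ for $C(\m{a})$ has both of its endpoints $\m{d}_0,\m{d}_1$ in the interior of $J$; in particular $\m{d}\neq\m{a}$ and the endpoints of $\m{a}$ and $\m{d}$ do not interleave, so $\m{d}\notin\m{a}_\times$ and $\m{a},\m{d}$ are non-intersecting. Travel along $J$ from $\m{a}_1$ to $\m{a}_0$ meeting $\m{d}_0$ before $\m{d}_1$, and let $J_{\mathrm{mid}}\subset J$ be the open sub-arc between them, the one not containing $\m{a}_0$ or $\m{a}_1$. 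By the previous paragraph, $\m{d}$ is colorful precisely when the number of door-endpoints in $J_{\mathrm{mid}}$ is odd.

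The main step is to recognize this count as $|\m{a}_\times\cap\m{d}_\times|$. A door chord $\m{e}$ has its $I$-endpoint outside $J_{\mathrm{mid}}$ (since $I\cap J_{\mathrm{mid}}=\varnothing$), so $\m{e}$ separates $\m{d}_0$ from $\m{d}_1$ --- equivalently $\m{e}\in\m{d}_\times$ --- if and only if its $J$-endpoint lies in $J_{\mathrm{mid}}$; conversely any chord in $\m{a}_\times\cap\m{d}_\times$ crosses $\m{a}$ (hence has a unique $J$-endpoint) and crosses $\m{d}$ (hence that $J$-endpoint lies in $J_{\mathrm{mid}}$). Therefore the number of door-endpoints in $J_{\mathrm{mid}}$ equals $|\m{a}_\times\cap\m{d}_\times|$, and $\m{d}$ is colorful if and only if $|\m{a}_\times\cap\m{d}_\times|$ is odd. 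Since a colorful chord exists by hypothesis, we obtain non-intersecting chords $\m{a},\m{d}\in\m{G}$ with $|\m{a}_\times\cap\m{d}_\times|$ odd, which by Corollary \ref{strongeven} cannot occur when $\m{G}$ is realizable; hence $\m{G}$ violates the even condition.

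I expect the only subtle point to be the bookkeeping in the last two paragraphs: correctly matching the door-endpoints strictly between $\m{d}_0$ and $\m{d}_1$ along the colored arc with the chords crossing both $\m{a}$ and $\m{d}$, and checking that the colorful chord is genuinely disjoint from $\m{a}$ so that it is the ``non-intersecting chords'' clause of the Even Condition (not the ``every chord'' clause) that is contradicted. Once the notation $I$, $J$, $J_{\mathrm{mid}}$ is in place, the remainder is a routine parity count, and one could alternatively first apply Proposition \ref{RemConway} to smooth away every chord of $C(\m{a})$ other than $\m{a}$ before running the same count, but the direct argument seems shorter.
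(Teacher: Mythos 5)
Your argument is correct and is essentially the paper's own proof, just with the parity bookkeeping made explicit: the paper likewise notes that a colorful chord cannot cross $\m{a}$ (else it would be a door chord) and that being colorful forces it to cross an odd number of door chords, whence $|\m{a}_\times\cap\m{b}_\times|\equiv 1\bmod 2$ contradicts the even condition. Your $I$, $J$, $J_{\mathrm{mid}}$ notation merely fills in the details the paper leaves implicit.
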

\begin{proof}
Indeed, let $\m{b}$ be a colorful chord for the $C$-contour $C(\m{a})$. First note that, the chord $\m{b}$ cannot cross $\m{a}$ because otherwise $\m{b}$ should be a door chord of the $C$-contour $C(\m{a})$. Next, if the chord $\m{b}$ is colorful then it crosses an odd number of door chords of $C(\m{a})$. Hence $|\m{a}_\times \cap \m{b}_\times| \equiv 1 \bmod 2$, as claimed.
\end{proof}

\begin{figure}[h!]
 \begin{tikzpicture}[scale=0.8]
  \draw [line width = 2,name path =v1, olive] (-1,0) to [out = 30, in = 180] (1,-0.2);
  \draw[line width =2, name path= o1,orange] (1,-0.2) to [out =0, in = 200] (4,0.2);
  \draw [line width = 2, name path= a, olive] (4,0.2) to [out =20 , in = 270] (5,1.5);
  \draw [line width = 2, name path= b, olive] (5,1.5) to [out = 90, in = 70] (0,1);
  \draw [line width = 2, name path =c,olive] (0,1) to [out = 250, in = 180] (0.3,-1);
  \draw [line width = 2, olive] (0.3,-1) to [out = 0, in = 250] (1,-0.2);
  \draw [line width = 2, name path= o2, orange] (1,-0.2) to [out = 70, in=180] (2.5,1.5);
  \draw [line width = 2, name path= o3, orange] (2.5,1.5) to [out = 0, in = 120] (4,0.2);
  \draw [line width =2, name path = ol2, olive] (4,0.2) to [out = 300, in = 0] (3,-1);
  \draw [name path= w, white] (3,-1) to [out = 180, in = 240] (2.5,0.3);
  \fill [name intersections={of=o1 and w, by={T4}}]
 (T4) circle (1pt);
 \draw [name path= h2, white] (T4) to (3.2,2);
  \fill [name intersections = {of =o3 and h2, by ={T2}}]
 (T2) circle (1pt);
 \draw [name path= h3, white] (T4) to (4,1);
  \fill [name intersections = {of =o3 and h3, by ={T3}}]
 (T3) circle (1pt);
 \draw [name path= h4, white] (T4) to (1,1.5);
  \fill [name intersections = {of =o2 and h4, by ={T1}}]
 (T1) circle (1pt);
\draw [line width =2, olive] (3,-1) to [out = 180, in = 260] (T4);
\draw [line width =2,name path=y1, cyan] (T4) to [out = 80, in = 220] (T2);
\draw [line width =2, name path = d, cyan] (T2) to [out = 40, in = 300] (5,3);
\draw [line width =2, name path = d1, cyan] (5,3) to [out = 120, in = 140] (0,0.5);
\draw [line width =2,name path=v2, cyan] (0,0.5) to [out = 320, in = 150] (T1);
\draw [line width =2, name path = y2, olive] (T1) to [out = 330, in = 200] (T3);
\draw [line width =2, name path = d2, olive] (T3) to [out = 20, in = 90] (6,0);
\draw [line width =2, olive] (6,0) to [out = 270, in = 0] (2,-2);
\draw [line width =2, olive] (2,-2) to [out = 180, in = 270] (-1.3,-0.5);
\draw [line width =2, olive] (-1.3, -0.5) to [out = 90, in = 210] (-1,0);
\fill (T1) circle (3pt) node [above] {$7$};
\draw (T2) circle (5pt) node [above] {$x$};
\draw (T3) circle (5pt) node [above] {$y$};
\fill (T4) circle (3pt) node [below right] {$2$};
 \draw [name intersections = {of =y1 and y2, by ={I}}]
 (I) circle (5pt) node [above left] {$z$};
  \fill (1,-0.2) circle (3pt) node [below right] {$1$};
    \fill[white] (4,0.2) circle (7pt);
    \draw[name path = circ, white] (4,0.2) circle (7pt);
\draw [name intersections = {of= circ and o1, by ={3ul}}]
 (3ul) node {};
\draw [name intersections = {of= circ and o3, by ={3ur}}]
 (3ur) node {};
\draw[orange,line width =2] (3ul) to [out = 17, in = 300] (3ur);
\draw [name intersections = {of= circ and a, by ={3dr}}]
 (3dr) node {};
\draw [name intersections = {of= circ and ol2, by ={3dl}}]
 (3dl) node {};
\draw[olive,line width =2] (3dl) to [out = 105, in =200] (3dr);
 \fill [name intersections = {of =v1 and c, by ={a1}}]
 (a1) circle (4pt) node [below left] {$0$};
 \fill [name intersections = {of =a and d2, by ={a2}}]
 (a2) circle (4pt) node [above right] {$4$};
  \fill [name intersections = {of =b and d, by ={a3}}]
 (a3) circle (4pt) node [above] {$5$};
  \fill [name intersections = {of =d1 and c, by ={a4}}]
 (a4) circle (4pt) node [above right] {$6$};
 \begin{scope}[xshift=10cm, yshift=1cm]
   \draw[line width =2] (0,0) circle (3);
   \draw[line width =5,orange] (112:3) --(200:3);
   \draw[line width =1.5,olive] (90:3) -- (222:3);
   \draw[line width=3,green] (134:3) -- (330:3);
   \draw[line width=2,teal] (260:3) --(355:3);
   \draw[line width=2,teal] (240:3) -- (15:3);
   \draw[line width =1.5,olive] (285:3) --(65:3);
   \draw[line width=3,green] (180:3)--(40:3);
  \begin{scope}[rotate =112]
   \draw[orange, line width=5] (3,0) arc(0:88:3);
   \end{scope}
  \begin{scope}[rotate =200]
   \draw[olive, line width=5] (3,0) arc(0:130:3);
   \end{scope}
  \begin{scope}[rotate =330]
   \draw[cyan, line width=5] (3,0) arc(0:70:3);
   \end{scope}
  \begin{scope}[rotate =40]
   \draw[olive, line width=5] (3,0) arc(0:72:3);
   \end{scope}
    \fill(90:3) circle(3pt) node[above] {$0$};
    \fill(112:3) circle(3pt) node[above left] {$1$};
    \fill(134:3) circle(3pt) node[above left] {$2$};
    \fill(180:3) circle(3pt) node[left] {$7$};
    \fill(200:3) circle(3pt) node[below left] {$1$};
    \fill(222:3) circle(3pt) node[below left] {$0$};
    \fill(240:3) circle(3pt) node[below] {$6$};
    \fill(260:3) circle(3pt) node[below] {$5$};
    \fill(285:3) circle(3pt) node[below] {$4$};
    \fill(330:3) circle(3pt) node[below right] {$2$};
    \fill(355:3) circle(3pt) node[right] {$5$};
    \fill(15:3) circle(3pt) node[right] {$6$};
    \fill(40:3) circle(3pt) node[above right] {$7$};
    \fill(65:3) circle(3pt) node[above] {$4$};
 \end{scope}
\end{tikzpicture}
  \caption{The Gauss diagram does not satisfy the even condition; both the chords $1$, $6$ are crossed by only one chord $2$.}\label{del2}
\end{figure}
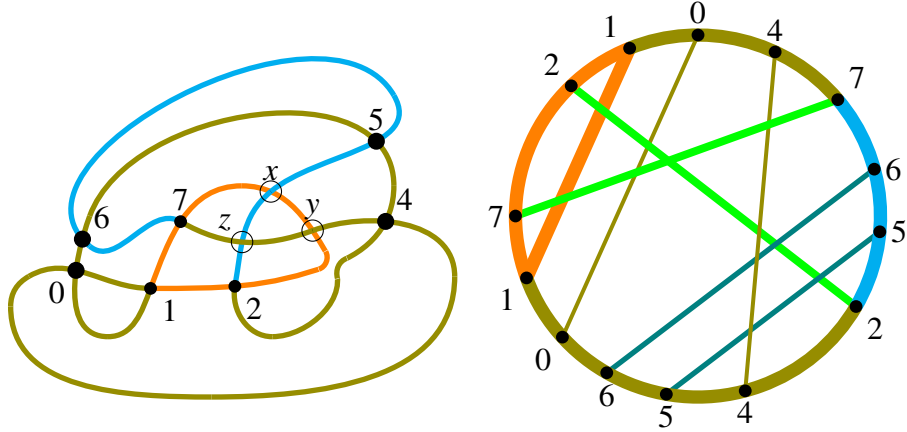

\begin{example}
  Let us consider the Gauss diagram which is shown in {\sc Figure} \ref{del2}. We have the orange $C$-contour $C(1)$ and the $C(1)$-contour coloring of the Gauss diagram and the corresponding (virtual) curve. We see that there are two chords (namely $5$ and $6$) which are colorful and $5_\times \cap 1_\times = 6_\times \cap 1_\times = \{2\}$.
\end{example}

\begin{lemma}\label{prop2}
  Let $\m{G}$ be a Gauss diagram and $\m{a}, \m{b} \in \m{G}$ be its intersecting chords. Suppose that there exists a colorful chord $\m{c}$ for an $X$-contour $X(\m{a},\m{b})$. Then there exists a $C$-contour of the Gauss diagram $\widehat{\m{G}}_\m{b}$ (= Conway's smoothing the chord $\m{b}$) such that the chord $\m{c}$ is colorful for this $C$-contour in $\widehat{\m{G}}_\m{b}$.
\end{lemma}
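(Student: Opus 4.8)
The plan is to produce the desired $C$-contour by hand, taking it to be $C(\m{a})$ inside $\widehat{\m{G}}_{\m{b}}$ for a suitable choice of arc, and then to check that the door chords and the colorings carry over from the $X$-contour.

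First I would fix coordinates. Since $\m{a}$ and $\m{b}$ intersect, the points $\m{a}_0,\m{b}_0,\m{a}_1,\m{b}_1$ occur on the circle of $\m{G}$ in this cyclic order, and I may assume $X(\m{a},\m{b})$ is the $X$-contour whose two arcs are $\m{a}_0\m{b}_0$ and $\m{a}_1\m{b}_1$. Read the double occurrence word of $\m{G}$ starting at $\m{a}_0$ as $\m{a}_0\,U_1\,\m{b}_0\,U_2\,\m{a}_1\,U_3\,\m{b}_1\,U_4$, so that $U_1,U_3$ are the letters lying on the two contour arcs and $U_2,U_4$ are the two arcs that get colored. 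Because $\m{c}$ is colorful, neither of its endpoints lies on an uncolored arc, hence both endpoints of $\m{c}$ lie in $U_2\cup U_4$. By Proposition~\ref{RemConway} (more precisely, the word description $\widehat{W}_{\m{b}}=W_1W_2^RW_3$ used in its proof, with $W_1=\m{a}_0U_1$, $W_2=U_2\m{a}_1U_3$, $W_3=U_4$) the word of $\widehat{\m{G}}_{\m{b}}$ is $\m{a}_0\,U_1\,U_3^R\,\m{a}_1\,U_2^R\,U_4$. Let $\alpha$ be the arc of this circle running from $\m{a}_0$ to $\m{a}_1$ through $U_1U_3^R$, and let $C(\m{a})$ be the $C$-contour with chosen arc $\alpha$.

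The next step is to match the door chords. In $\m{G}$ a chord is a door chord of $X(\m{a},\m{b})$ exactly when one of its endpoints lies in $U_1\cup U_3$ and the other in $U_2\cup U_4$. In $\widehat{\m{G}}_{\m{b}}$ a chord is a door chord of $C(\m{a})$ exactly when it crosses $\m{a}$, i.e. has one endpoint on $\alpha$ and one on the complementary arc, which is again one endpoint in $U_1\cup U_3$ and one in $U_2\cup U_4$. Hence the two sets of door chords literally coincide; in particular $\m{c}$, with both endpoints in $U_2\cup U_4$, is a door chord of neither contour, so it is a legitimate candidate to be colorful for $C(\m{a})$.

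Finally I would compare the colorings. The $X(\m{a},\m{b})$-coloring of $\m{G}$ (Definition~\ref{coloring}) leaves $U_1,U_3$ uncolored and $2$-colors $U_2\cup U_4$, switching color at each endpoint of a door chord; the $C(\m{a})$-coloring of $\widehat{\m{G}}_{\m{b}}$ leaves $\alpha=U_1U_3^R$ uncolored and $2$-colors $U_2^RU_4$, switching at the very same door-chord endpoints. Two points of the colored region get different colors if and only if an odd number of door-chord endpoints lies between them, and reversing a block of the word does not change the parity of the number of such endpoints strictly between two fixed points of that block. If $\m{c}$ has both endpoints in $U_2$ (or both in $U_4$) this already shows $\m{c}$ is colorful for $X(\m{a},\m{b})$ iff it is colorful for $C(\m{a})$, and we are done. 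The remaining — and, I expect, the hard — case is when one endpoint of $\m{c}$ lies in $U_2$ and the other in $U_4$: then the color of $\m{c}$ depends on how the colorings of $U_2$ and $U_4$ are glued across the uncolored arcs, and the rearrangement producing $U_1U_3^R$ and $U_2^R$ shifts the count of door-chord endpoints between $\m{c}_0$ and $\m{c}_1$ by the number of door-chord endpoints in $U_2$ plus those in $U_3$. To close this case I would first reduce, via Proposition~\ref{RemConway}, to the situation where $X(\m{a},\m{b})$ has no interior chords (Conway-smoothing an interior chord deletes it and changes neither the door chords nor the coloring), and then show that for a cross-block colorful chord $\m{c}$ this count is forced to be even — using that such a $\m{c}$ necessarily crosses both $\m{a}$ and $\m{b}$, together with the parity constraints on $\m{a}_\times$, $\m{b}_\times$, $\m{c}_\times$ and the door chords available in the regime in which the lemma is applied (cf.\ Corollary~\ref{strongeven}) — so that the same parity argument applies and $\m{c}$ is again colorful for $C(\m{a})$.
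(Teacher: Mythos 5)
Your construction is the same one the paper intends: pass to $C(\m{a})$ in $\widehat{\m{G}}_{\m{b}}$ with the chosen arc $U_1U_3^{R}$, note that its door chords coincide with the door chords of $X(\m{a},\m{b})$, and compare colorings. Up to that point you are in step with the paper's proof, and your treatment of the case where both endpoints of $\m{c}$ lie in the same colored block is complete. The gap is exactly where you place it: the cross-block case is never closed. You reduce it to the claim that the number of door-chord endpoints lying in $U_2$ (the ``count'' you shift by) is even, and you propose to extract this from parity constraints ``in the regime in which the lemma is applied''. That parity claim does not follow from the even condition, and the lemma as stated carries no hypothesis that would supply it. Concretely, consider the Gauss word $\m{a}\,\m{e}\,\m{b}\,\m{c}\,\m{e}\,\m{a}\,\m{f}\,\m{b}\,\m{c}\,\m{f}$. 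Every $|\m{x}_\times|$, and every $|\m{x}_\times\cap\m{y}_\times|$ for non-intersecting $\m{x},\m{y}$, equals $0$ or $2$, so the even condition holds. For $X(\m{a},\m{b})$ with contour arcs containing $\m{e}_0$ and $\m{f}_0$, the door chords are $\m{e}$ and $\m{f}$, each colored block contains exactly one door-chord endpoint (so your count is $1$), and under the coloring convention implicit in the paper's {\sc Figure} \ref{d} (where the chord $5$ is declared colorful) the chord $\m{c}$ is colorful for this $X$-contour. Yet smoothing $\m{b}$ yields the word $\m{a}\,\m{e}\,\m{f}\,\m{a}\,\m{e}\,\m{c}\,\m{c}\,\m{f}$, in which $\m{c}_\times=\varnothing$; by the reasoning of Lemma \ref{prop1}, a chord colorful for some $C(\m{x})$ must cross an odd number of chords of $\m{x}_\times$, so $\m{c}$ is colorful for no $C$-contour of $\widehat{\m{G}}_{\m{b}}$ at all.

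So the step you flag as hard is not merely hard: the obstruction you isolate is real, and the conclusion can fail. To be fair to you, the paper's own proof disposes of the coloring comparison in a single sentence (``we may take the $C(\m{a})$-contour coloring \dots such that $\m{c}$ is the colorful chord'') and never confronts the cross-block case, so you have located a genuine weak point of the argument rather than failed to reproduce a correct one. The source of the trouble is that Definition \ref{coloring} does not pin down how the colorings of the two components of the complement of an $X$-contour are glued across the uncolored arcs; any honest repair of the lemma must either fix that gluing so that the two notions of colorful provably agree, restrict attention to chords with both endpoints in one block, or add a hypothesis forcing your count to be even.
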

\begin{proof}
Indeed, consider the Gauss diagram $\widehat{\m{G}}_\m{b}$. From Proposition \ref{RemConway} it follows that after Conway's smoothing the chord $\m{b}$, the chord $\m{c}$ does not intersect $\m{a}$ and intersects the same door chords of the $X$-contour $X(\m{a},\m{b})$ as in $\m{G}$. Further, let us consider the $C$-contour $C(\m{a})$ in $\widehat{\m{G}}_\m{b}$ such that it does not contain the chord $\m{c}$. By Proposition \ref{RemConway}, the chord $\m{a}$ crosses in $\widehat{\m{G}}_\m{b}$ only the chord that are door chords of the $X$-contour $X(\m{a},\m{b})$. Hence, by Definition \ref{coloring}, we may take the $C(\m{a})$-contour coloring of $\widehat{\m{G}}_\m{b}$ such that $\m{c}$ is the colorful chord for this $C$-contour.
 \end{proof}

\begin{proposition}\label{gen}
   Let a Gauss diagram $\mathfrak{G}$ satisfy the even condition. Then $\m{G}$ is realizable if and only if $\widehat{\m{G}}_\m{c}$ satisfies the even condition for every chord $\m{c} \in \m{G}$, $\widehat{\m{G}}_\m{c}$.
\end{proposition}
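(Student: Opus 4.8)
The plan is to assemble the proposition from results already in hand: the ``only if'' direction reduces to the stability of realizability under Conway's smoothing together with Corollary \ref{strongeven}, while the ``if'' direction is the contrapositive of the chain Proposition \ref{propaboutX} $\Rightarrow$ Lemma \ref{prop2} $\Rightarrow$ Lemma \ref{prop1}.

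First I would dispatch the ``only if'' direction. Suppose $\m{G}$ is realizable, say by a plane curve $\mathscr{C}$, and fix a chord $\m{c}\in\m{G}$. As recalled in Section 1, in the discussion preceding Proposition \ref{RemConway}, Conway's smoothing of the crossing $c$ of $\mathscr{C}$ produces a genuine plane curve $\widehat{\mathscr{C}}_c$ whose Gauss diagram is precisely $\widehat{\m{G}}_\m{c}$; hence $\widehat{\m{G}}_\m{c}$ is realizable, and Corollary \ref{strongeven} then gives that it satisfies the even condition. (Should smoothing produce a chord with empty $\times$-set, one reads its contribution to the even condition as $0$, which is even; geometrically such a chord is a Reidemeister-I kink and is irrelevant to realizability.)

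For the ``if'' direction I would argue by contraposition: assume $\m{G}$ satisfies the even condition but is \emph{not} realizable, and produce a chord $\m{c}\in\m{G}$ for which $\widehat{\m{G}}_\m{c}$ violates the even condition. By Proposition \ref{propaboutX} there is an $X$-contour $X(\m{a},\m{b})$ --- in particular $\m{a},\m{b}\in\m{G}$ are intersecting chords --- and a chord, call it $\m{c}$, that is colorful for $X(\m{a},\m{b})$. Applying Lemma \ref{prop2} to this data, the smoothed diagram $\widehat{\m{G}}_\m{b}$ possesses a $C$-contour for which $\m{c}$ is colorful. By Lemma \ref{prop1} (the lemma on colorful chords for $C$-contours), a diagram carrying a colorful chord for a $C$-contour cannot satisfy the even condition; hence $\widehat{\m{G}}_\m{b}$ does not satisfy it. Taking the witness chord to be $\m{b}$ completes the contrapositive, and with it the proposition.

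Since the conceptual content is already packed into Proposition \ref{propaboutX} and Lemmas \ref{prop1} and \ref{prop2}, I do not anticipate a genuine obstacle; the only points that ask for a word of care are the degenerate chords that a smoothing may create (handled as above) and checking that the standing hypothesis $\m{c}_\times\neq\varnothing$ is not silently invoked where it could fail after smoothing. I would close with the remark that iterating this equivalence --- by induction on the number of chords, applied to each $\widehat{\m{G}}_\m{c}$ --- upgrades it to the recursive criterion ``$\m{G}$ is realizable if and only if every iterated Conway smoothing of $\m{G}$ satisfies the even condition'', which is the form the algorithm of the next section relies on.
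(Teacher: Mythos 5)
Your proposal is correct and follows essentially the same route as the paper: the ``if'' direction is proved by contraposition via the chain Proposition \ref{propaboutX} $\Rightarrow$ Lemma \ref{prop2} $\Rightarrow$ Lemma \ref{prop1}, taking $\m{b}$ as the witness chord, exactly as in the paper's proof. Your explicit treatment of the ``only if'' direction (realizability is preserved by Conway smoothing, then apply Corollary \ref{strongeven}) is left implicit in the paper but is the intended argument.
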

\begin{proof}
Indeed, let $\mathfrak{G}$ be a non-realizable Gauss diagram and let $\m{G}$ satisfy the even condition. By Proposition \ref{propaboutX}, there exists a colorful chord (say $\m{c}$) for a $X$-counter $X(\m{a},\m{b})$ of $\m{G}$. By Lemma \ref{prop2}, the chord $\m{c}$ is the colorful chord in $\widehat{\m{G}}_\m{b}$. Hence from Lemma \ref{prop1} it follows that $\widehat{\m{G}}_\m{b}$ does not satisfy the even condition, and the statement follows.
\end{proof}

We can summarize our results in the following theorem.

\begin{theorem}\label{themain}
 A Gauss diagram $\m{G}$ is realizable if and only if the following conditions hold:
  \begin{itemize}
    \item[(1)] the number of all chords that cross a both of non-intersecting chords and every chord is even (including zero),
    \item[(2)] for every chord $\m{c} \in \m{G}$ the Gauss diagram $\widehat{\m{G}}_\m{c}$ (= Conway's smoothing the chord $\m{c}$) also satisfies the above condition.
  \end{itemize}
\end{theorem}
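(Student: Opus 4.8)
The plan is to assemble Theorem \ref{themain} from two facts already in hand: the necessity of the even condition for realizability (Corollary \ref{strongeven}), and the characterization in Proposition \ref{gen}, which says that once $\m{G}$ is known to satisfy the even condition, realizability of $\m{G}$ is equivalent to all of its one-crossing Conway smoothings $\widehat{\m{G}}_\m{c}$ satisfying the even condition. The only extra ingredient needed for the ``only if'' direction is the remark, recorded in the paragraph preceding Proposition \ref{RemConway}, that $\widehat{\m{G}}_\m{c}$ is the Gauss diagram of the shadow $\widehat{\mathscr{C}}_c$ obtained by Conway smoothing the crossing $c$; in particular Conway smoothing carries a realizable Gauss diagram to a realizable one.

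First I would prove the implication ``$\Rightarrow$''. Suppose $\m{G}$ is realizable, say by a plane curve $\mathscr{C}$. Then Corollary \ref{strongeven} gives condition (1). For condition (2), fix a chord $\m{c}\in\m{G}$. By the construction recalled before Proposition \ref{RemConway}, $\widehat{\m{G}}_\m{c}$ is the Gauss diagram of $\widehat{\mathscr{C}}_c$, which is again a plane curve (the shadow of the knot obtained from $K$ by Conway smoothing the crossing $c$); hence $\widehat{\m{G}}_\m{c}$ is realizable, and another application of Corollary \ref{strongeven} shows that it satisfies the even condition. If ``the above condition'' in (2) is to be read recursively (as (1) together with (2) for $\widehat{\m{G}}_\m{c}$), one instead runs an induction on the number of chords: the inductive step is precisely the argument just given applied to $\widehat{\m{G}}_\m{c}$, whose chord count is strictly smaller, and the base case is a diagram with no crossed chord, which is trivially realizable.

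For ``$\Leftarrow$'', assume $\m{G}$ satisfies (1) and (2). Then $\m{G}$ satisfies the even condition, so the hypothesis of Proposition \ref{gen} holds, while condition (2) furnishes exactly the assumption that $\widehat{\m{G}}_\m{c}$ satisfies the even condition for every $\m{c}\in\m{G}$. Proposition \ref{gen} then yields that $\m{G}$ is realizable. (Under the recursive reading of (2) only the first level of the recursion is used at this step.)

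The substantive content lies entirely in Proposition \ref{gen} and its supporting results (Propositions \ref{propaboutX} and \ref{G=attach} together with Lemmas \ref{prop1} and \ref{prop2}), so at the level of Theorem \ref{themain} itself there is no genuine obstacle; the one point demanding a little care is the bookkeeping in the ``$\Rightarrow$'' direction, namely confirming that Conway smoothing of a knot shadow is again a knot shadow so that realizability is inherited by each $\widehat{\m{G}}_\m{c}$. A minor caveat is that the standing hypothesis $\m{c}_\times\neq\varnothing$ may fail for some chord of $\widehat{\m{G}}_\m{c}$; this is harmless, since such a chord bounds a trivial (Reidemeister~I) loop and affects neither the even condition nor realizability, so it may simply be deleted before applying the preceding results.
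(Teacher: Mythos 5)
Your proposal is correct and follows essentially the same route as the paper, which offers no separate proof of Theorem \ref{themain} but presents it as a summary of Corollary \ref{strongeven} (necessity of the even condition, applied both to $\m{G}$ and to each realizable smoothing $\widehat{\m{G}}_\m{c}$) together with Proposition \ref{gen} (sufficiency). Your added remarks on the recursive reading of condition (2) and on chords with $\m{c}_\times=\varnothing$ arising after smoothing are sensible bookkeeping that the paper leaves implicit.
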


\section{Matrixes of Gauss Diagrams}

In this section we are going to translate into matrix language the conditions of realizability of Gauss diagrams. We consider all matrixes over the field $\mathbb{Z}_2$.

\begin{definition}
  Given a Gauss diagram $\m{G}$ contains $n$ chords, say, $\m{c}_1,\ldots,\m{c}_n$. Construct a $n \times n$ matrix $\mathsf{M}(\m{G}) = (\mathsf{m}_{i,j})_{1 \le i,j\le n}$ as follows: (1) put $\mathsf{m}_{ii} = 0$, $1 \le i \le n$, (2) $\mathsf{m}_{ij} = 1$ if the chords $\m{c}_i$, $\m{c}_j$ intersect and $\mathsf{m}_{ij} = 0$ in otherwise.
\end{definition}

It is obviously that $\mathsf{M}(\m{G})$ is a symmetric matrix with respect to the main diagonal.

Next, let $\mathsf{M}(\m{G})$ be a $n\times n$ matrix of a Gauss diagram $\m{G}$. Let $\cup_{1 \le i \le n}\{\mathsf{m}_i = (\mathsf{m}_{i1}, \ldots, \mathsf{m}_{in})\}$ be all its strings. Define a scalar product of strings as follows:
\[
 \langle \mathsf{m}_i, \mathsf{m}_j \rangle:=\mathsf{m}_{i1}\mathsf{m}_{j1} + \cdots + \mathsf{m}_{in}\mathsf{m}_{jn}.
\]

It is clear that we can define ``scalar product'' of any number of strings as follows
\[
 \langle \mathsf{m}_{i_1}, \ldots, \mathsf{m}_{i_k} \rangle: = \mathsf{m}_{i_11} \cdots \mathsf{m}_{i_k1} + \cdots + \mathsf{m}_{i_1n} \cdots \mathsf{m}_{i_kn}.
\]

\begin{lemma}\label{nscalar}
  Let $\m{G}$ be a Gauss diagram contains $n$ chords $\m{c}_1,\ldots,\m{c}_n$. Consider the corresponding matrix $\mathsf{M}(\m{G})$. Let the chords $\m{c}_1,\ldots,\m{c}_n$ correspondence to the strings $\mathsf{m}_1,\ldots,\mathsf{m}_n$, respectively.
     \[
   | {\m{c}_{i_1}}_\times \cap \cdots \cap {\m{c}_{i_k}}_\times| = \langle \mathsf{m}_{i_1}, \ldots, \mathsf{m}_{i_k} \rangle,
  \]
\end{lemma}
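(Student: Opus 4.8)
The plan is to prove the identity coordinate by coordinate, reading off the meaning of each column of $\mathsf{M}(\m{G})$. Fix the indices $i_1,\dots,i_k$ and expand the left side of the definition of the $k$-fold product:
\[
 \langle \mathsf{m}_{i_1}, \ldots, \mathsf{m}_{i_k} \rangle = \sum_{j=1}^{n} \mathsf{m}_{i_1 j}\,\mathsf{m}_{i_2 j}\cdots \mathsf{m}_{i_k j}.
\]
The first step is to analyze, for a fixed column index $j$, the single product $\mathsf{m}_{i_1 j}\cdots \mathsf{m}_{i_k j}$. Since $\mathsf{M}(\m{G})$ is a $0/1$ matrix, this product equals $1$ precisely when every factor equals $1$, that is, when $\mathsf{m}_{i_t j}=1$ for all $t=1,\dots,k$; by the definition of $\mathsf{M}(\m{G})$ this means the chord $\m{c}_j$ crosses each of the chords $\m{c}_{i_1},\dots,\m{c}_{i_k}$, i.e.\ $\m{c}_j \in {\m{c}_{i_t}}_\times$ for every $t$, which is exactly the condition $\m{c}_j \in {\m{c}_{i_1}}_\times \cap \cdots \cap {\m{c}_{i_k}}_\times$.

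Having made this observation, the second step is simply to sum over $j$: each column contributes $1$ exactly when $\m{c}_j$ lies in the intersection ${\m{c}_{i_1}}_\times \cap \cdots \cap {\m{c}_{i_k}}_\times$, and $0$ otherwise, so the total counts the chords in that intersection, giving $|{\m{c}_{i_1}}_\times \cap \cdots \cap {\m{c}_{i_k}}_\times|$ (read modulo $2$, since the arithmetic is over $\mathbb{Z}_2$; this is exactly the parity information that the even condition uses). One small point worth recording for consistency: if $j\in\{i_1,\dots,i_k\}$, say $j=i_t$, then the factor $\mathsf{m}_{i_t j}=\mathsf{m}_{jj}=0$ kills the product, so such columns contribute nothing; this matches the convention $\m{c}\notin\m{c}_\times$ adopted in the Preliminaries, under which $\m{c}_{i_t}$ never belongs to ${\m{c}_{i_t}}_\times$ anyway.

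Since the whole argument is a direct unwinding of the definitions of $\mathsf{M}(\m{G})$, of the iterated scalar product, and of the sets $\m{c}_\times$, there is no real obstacle; the only thing to be careful about is the bookkeeping of the two conventions just mentioned — that the diagonal of $\mathsf{M}(\m{G})$ is zero and that a chord is not counted among those crossing it — and, if one wants a literal equality rather than a congruence, phrasing the right-hand side as the integer sum $\sum_{j=1}^n \mathsf{m}_{i_1 j}\cdots\mathsf{m}_{i_k j}$ before reducing mod $2$.
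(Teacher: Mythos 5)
Your proof is correct and follows essentially the same route as the paper's: both arguments observe that the $j$-th term $\mathsf{m}_{i_1 j}\cdots\mathsf{m}_{i_k j}$ is nonzero exactly when the chord $\m{c}_j$ crosses all of $\m{c}_{i_1},\ldots,\m{c}_{i_k}$, and then sum over $j$. Your additional remarks about the zero diagonal and the mod-$2$ reading are sensible bookkeeping that the paper leaves implicit.
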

for every $\{i_1,\ldots,i_k\} \subseteq \{1,2,\ldots,n\}$.
\begin{proof}
  Indeed, let $\mathsf{m}_{i_1t} \cdots \mathsf{m}_{i_kt} \ne 0$, for some $1 \le t \le n$, then the chord $\m{c}_t$ intersect chords $\m{c}_{i_1}, \ldots, \m{c}_{i_k}$. This completes the proof.
\end{proof}

\begin{theorem}\label{Gauss->matrix}
  Let $\m{G}$ be a realizable Gauss diagram contains $n$ chords. Then its matrix $\mathsf{M}(\m{G})$ satisfies the following conditions:
  \begin{itemize}
    \item[(1)] $\langle \mathsf{m}_i, \mathsf{m}_i \rangle \equiv 0 \bmod(2)$, $1 \le i \le n$,
    \item[(2)]  $\langle \mathsf{m}_i, \mathsf{m}_j \rangle \equiv 0 \bmod(2)$, if the corresponding chords do not intersect,
    \item[(3)] $\langle \mathsf{m}_i, \mathsf{m}_j \rangle + \langle \mathsf{m}_i, \mathsf{m}_k \rangle + \langle \mathsf{m}_j, \mathsf{m}_k \rangle  \equiv 1 \bmod(2)$, if the corresponding chords pairwise intersect.
  \end{itemize}
\end{theorem}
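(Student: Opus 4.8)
\emph{Sketch of the intended proof.} Write $\m{c}_1,\dots,\m{c}_n$ for the chords of $\m{G}$, with $\m{c}_i$ corresponding to the row $\mathsf{m}_i$. By Lemma~\ref{nscalar} we have $\langle\mathsf{m}_i,\mathsf{m}_i\rangle=|{\m{c}_i}_\times|$ and $\langle\mathsf{m}_i,\mathsf{m}_j\rangle=|{\m{c}_i}_\times\cap{\m{c}_j}_\times|$, so (1) and (2) are exactly the two halves of the Even Condition (Corollary~\ref{strongeven}), the second one applied to the non-intersecting pair $\m{c}_i,\m{c}_j$. The real content is therefore (3), and I would deduce it by combining the Even Condition for $\m{G}$ with the Even Condition for the one-chord Conway smoothings of $\m{G}$.

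Fix three pairwise intersecting chords $\m{c}_i,\m{c}_j,\m{c}_k$. For a chord $\m{d}\notin\{\m{c}_i,\m{c}_j,\m{c}_k\}$ let its \emph{type} $(\alpha,\beta,\gamma)\in\{0,1\}^3$ record whether $\m{d}$ lies in ${\m{c}_i}_\times$, ${\m{c}_j}_\times$, ${\m{c}_k}_\times$ respectively, and let $N_{\alpha\beta\gamma}$ count the chords of that type. A chord crosses both $\m{c}_i$ and $\m{c}_j$ precisely when it equals $\m{c}_k$ or has type $(1,1,\ast)$, so by Lemma~\ref{nscalar} and the convention $\m{c}\notin\m{c}_\times$ we get $\langle\mathsf{m}_i,\mathsf{m}_j\rangle=1+N_{110}+N_{111}$, and cyclically for the other two pairs. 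Summing,
\[
\langle\mathsf{m}_i,\mathsf{m}_j\rangle+\langle\mathsf{m}_i,\mathsf{m}_k\rangle+\langle\mathsf{m}_j,\mathsf{m}_k\rangle\equiv 1+N_{110}+N_{101}+N_{011}+N_{111}\pmod 2,
\]
so (3) is equivalent to the single congruence $N_{110}+N_{101}+N_{011}+N_{111}\equiv 0\pmod 2$.

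To establish that congruence I would assemble six relations. First, the Even Condition for the chord $\m{c}_i$: its crossing set consists of $\m{c}_j$, $\m{c}_k$, and every $\m{d}$ of type $(1,\ast,\ast)$, giving $N_{100}+N_{101}+N_{110}+N_{111}\equiv 0$, and cyclically for $\m{c}_j$ and $\m{c}_k$. Second, the Conway smoothings: since $\m{c}_i,\m{c}_j\in{\m{c}_k}_\times$ and they intersect, Proposition~\ref{RemConway} makes them non-intersecting chords of $\widehat{\m{G}}_{\m{c}_k}$, which is realizable, being the Gauss diagram of the plane curve $\widehat{\mathscr{C}}_{c_k}$. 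Applying Corollary~\ref{strongeven} in $\widehat{\m{G}}_{\m{c}_k}$ to this non-intersecting pair, and reading off from Proposition~\ref{RemConway} which chords cross both $\m{c}_i$ and $\m{c}_j$ after the smoothing --- this is exactly the chords whose type in $\m{G}$ is $(0,0,1)$ or $(1,1,0)$ --- yields $N_{001}+N_{110}\equiv 0$; smoothing $\m{c}_j$ and then $\m{c}_i$ gives $N_{010}+N_{101}\equiv 0$ and $N_{100}+N_{011}\equiv 0$. Adding the first three relations gives $N_{100}+N_{010}+N_{001}+N_{111}\equiv 0$; adding the last three gives $N_{100}+N_{010}+N_{001}\equiv N_{110}+N_{101}+N_{011}$; combining the two, $N_{110}+N_{101}+N_{011}+N_{111}\equiv 0$, which is what is needed.

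The step I expect to be the main obstacle is the crossing-bookkeeping under Conway's smoothing that underlies the second batch of relations. Proposition~\ref{RemConway} negates the intersection status of a pair of chords only when \emph{both} of them belong to ${\m{c}_k}_\times$, so in computing the crossing set of $\m{c}_i$ inside $\widehat{\m{G}}_{\m{c}_k}$ one must treat separately the chords $\m{d}\in{\m{c}_k}_\times$ (where the crossing with $\m{c}_i$ is negated) and the chords $\m{d}\notin{\m{c}_k}_\times$ (where it is unchanged), do the same for $\m{c}_j$, intersect the two resulting sets, and also verify that $\m{c}_i$ and $\m{c}_j$ themselves are not counted; only then does the claim that exactly the types $(0,0,1)$ and $(1,1,0)$ survive fall out. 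Apart from that, the proof uses nothing beyond Lemma~\ref{nscalar}, Corollary~\ref{strongeven}, and Proposition~\ref{RemConway}.
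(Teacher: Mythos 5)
Your proposal is correct, and its engine is the same as the paper's: Proposition~\ref{RemConway} turns two of the three pairwise-intersecting chords into a non-intersecting pair of the (still realizable) smoothed diagram, and the even condition applied there yields the missing parity relation. The difference is in the bookkeeping. The paper performs a \emph{single} smoothing, say of $\m{a}$, observes that the chords crossing both $\m{b}$ and $\m{c}$ in $\widehat{\m{G}}_{\m{a}}$ are exactly $A\cup B$ with $A=\m{a}_\times\setminus(\m{b}_\times\cup\m{c}_\times)$ and $B=\m{b}_\times\cap\m{c}_\times\setminus(\{\m{a}\}\cup\m{a}_\times)$ --- in your notation the single relation $N_{100}+N_{011}\equiv 0$ --- and then finishes by expressing $|A|$ and $|B|$ through inclusion--exclusion using the triple product $\langle\m{a},\m{b},\m{c}\rangle$ of Lemma~\ref{nscalar}, whose two occurrences cancel mod $2$. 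You instead derive all three smoothing relations plus the three single-chord even conditions and eliminate over $\mathbb{Z}_2$; this is more symmetric and avoids the triple scalar product, at the cost of carrying six congruences, and it makes fully explicit the case analysis (types $(0,0,1)$ versus $(1,1,0)$) that the paper compresses into the definition of $A\cup B$. Both arguments are sound; the paper's is the more economical, yours the more transparent about why the toggling of intersections under smoothing produces exactly those two surviving types.
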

\begin{proof}
 The first two conditions follow from the first condition of Theorem \ref{themain} and Lemma \ref{nscalar}.

Assume that $\m{G}$ does not contain three pairwise intersecting chords. Then $\m{G}$ is realizable if and only if the first condition of Theorem \ref{themain} holds and we get the first two conditions for $\mathsf{M}(\m{G})$.

Next, let $\m{a},\m{b},\m{c}$ be three pairwise intersecting chords. Consider the following sets of chords (see {\sc Figure} \ref{ABC}):
  \begin{align*}
  &A: = \m{a}_\times \setminus \{ \m{b}_\times, \m{c}_\times\},\\
  & B:= \m{b}_\times \cap \m{c}_\times \setminus \left\{ \{\m{a}\}, \m{a}_\times \right\} ). \end{align*}

 \begin{figure}[h!]
   \begin{tikzpicture}
     \draw[line width =2] (0,0) circle (2);
      \draw[line width =1] (90:2)--(270:2);
      \fill(270:2) circle(3pt) node[below] {$\m{b}$};
      \fill(90:2) circle(3pt) node[above] {$\m{b}$};
      \draw[line width =1] (120:2)--(300:2);
      \fill(120:2) circle(3pt) node[above] {$\m{c}$};
      \fill(300:2) circle(3pt) node[below] {$\m{c}$};
      \draw[line width =1] (180:2)--(20:2);
      \fill(180:2) circle(3pt) node[left] {$\m{a}$};
      \fill(20:2) circle(3pt) node[right] {$\m{a}$};
      \draw[line width =8] (40:2) [out = 230, in = 150] to (340:2);
      \fill(40:2) circle(0.2pt) node[right] {${A}$};
      \fill(340:2) circle(0.2pt) node[right] {${A}$};
      \draw[line width =6] (70:2) [out = 250, in = 340] to (150:2);
      \fill(70:2) circle(0.2pt) node[above] {${B}$};
      \fill(150:2) circle(0.2pt) node[left] {${B}$};
      \end{tikzpicture}
    \caption{The sets $A$, $B$ are roughly shown.}\label{ABC}
  \end{figure}
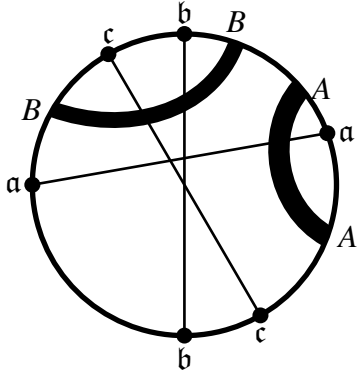

By Proposition \ref{RemConway}, chords $\m{b}, \m{c}$ does not intersect in  $\widehat{\m{G}}_\m{a}$, and the set $\m{b}_\times \cap \m{c}$ of chords in  $\widehat{\m{G}}_\m{a}$ is equal to the set $A \cup B$. Hence, by Theorem \ref{themain},
\[
  |A| + |B| \equiv 0 \bmod (2).
 \]

Further, using Lemma \ref{nscalar}, we obtain:
 \begin{align*}
   &|A| = \langle \m{a}, \m{a} \rangle - \langle \m{a}, \m{b} \rangle - \langle \m{a}, \m{c} \rangle - \langle \m{a}, \m{b}, \m{c} \rangle,\\
   &|B| = \langle \m{b}, \m{c} \rangle -1- \langle \m{a}, \m{b}, \m{c} \rangle.
 \end{align*}

It follows that
 \begin{eqnarray*}
   |A| + |B| & =& \langle \m{a}, \m{a} \rangle - \langle \m{a}, \m{b} \rangle - \langle \m{a}, \m{c} \rangle - \langle \m{a}, \m{b}, \m{c} \rangle\\
    &&+ \langle \m{b}, \m{c} \rangle -1- \langle \m{a}, \m{b}, \m{c} \rangle\\
    &\equiv&  \langle \m{a}, \m{b} \rangle + \langle \m{a}, \m{c} \rangle + \langle \m{b}, \m{c} \rangle \\
    &\equiv& 1 \bmod (2),
 \end{eqnarray*}
because, as we have already discussed, $\langle \m{a}, \m{a} \rangle \equiv 0 \bmod (2)$. As claimed.
\end{proof}

\begin{remark}
  The number of matrix satisfy the above conditions is bigger than a number of realizable Gauss diagrams. The reason is the following observation. A matrix which satisfies the above conditions ``knows'' only intersections but don't know positions of chords. However there is a very important sort of plane curves (=meanderes) such that there is a one-to-one correspondence between Gauss diagrams of these curves and the corresponding matrixes.
\end{remark}

\section{Thurston Generators of Braid Groups}
We will use as generators for $B_n$ the set of {\it positive crossings}, that is, the crossings between two (necessary adjacent) strands, with the front strand having a positive slope. We denote these generators by $\sigma_1, \ldots, \sigma_{n-1}$. These generators are subject to the following relations:
\[
\begin{cases}
\sigma_i\sigma_j = \sigma_j\sigma_i, \mbox{ if } |i-j| >1,\\
\sigma_i\sigma_{i+1}\sigma_i = \sigma_{i+1}\sigma_i\sigma_{i+1}.
\end{cases}
\]

One obvious invariant of an isotopy of a braid is the permutation it induces on the order of the strands: given a braid $B$, the strands define a map $p(B)$ from the top set of endpoints to the bottom set of endpoints, which we interpret as a permutation of $\{1, \ldots, n\}$. In this way we get a homomorphism $p:B_n \to \m{S}_n$, where $\m{S}_n$ is the symmetric group. The generator $\sigma_i$ is mapped to the transposition $s_i = (i,i+1)$. We denote by $S_n = \{s_1, \ldots, s_{n-1}\}$ the set of generators for the symmetric group $\m{S}_n$.

\par Now we want to define an inverse map $p^{-1}:\m{S}_n \to B_n$.  To this end, we need the following definition \cite[p.183]{EpThur}

\begin{definition}\label{conR}
Let $S = \{s_1, \ldots, s_{n-1}\}$ be the set of generators for $\m{S}_n$. Each permutation $\pi$ gives rise to a total order relation $\le_\pi$ on $\{1, \ldots,n\}$  with $i \le_\pi j$ if $\pi(i) < \pi(j)$. We set
$$
R_\pi: = \{(i,j) \in \{1, \ldots, n\} \times \{1,\ldots,n\}|i<j, \, \pi(i) > \pi(j)\}.
$$
\end{definition}

\begin{lemma}\cite[Lemma 9.1.6]{EpThur}\label{criteria}
A set $R$ of pairs $(i,j)$, with $i <j$, comes from some permutation if and only if the following two conditions are satisfied:
\par \textup{i)} If $(i,j) \in R$ and $(j,k) \in R$, then $(i,k) \in R$.
\par \textup{ii)} If $(i,k) \in R$, then $(i,j) \in R$ or $(j,k) \in R$ for every $j$ with $i<j<k$.
\end{lemma}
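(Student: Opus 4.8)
The plan is to prove the two implications separately, with the reverse direction carrying essentially all of the work; the forward direction is a routine unwinding of the definition of $R_\pi$.

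For the forward direction, assume $R = R_\pi$. Condition (i) is immediate: if $(i,j),(j,k)\in R$ with $i<j<k$, then $\pi(i)>\pi(j)>\pi(k)$, hence $\pi(i)>\pi(k)$ and $(i,k)\in R$. For condition (ii), suppose $(i,k)\in R$, so $\pi(i)>\pi(k)$, and fix $j$ with $i<j<k$. Since $\pi$ is a bijection, $\pi(j)\ne\pi(i)$, so either $\pi(j)<\pi(i)$, giving $(i,j)\in R$, or $\pi(j)>\pi(i)>\pi(k)$, giving $(j,k)\in R$; either way (ii) holds.

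For the reverse direction, I would reconstruct a permutation from $R$. Define a binary relation $\prec$ on $\{1,\dots,n\}$ by declaring, for $i\ne j$, that $i\prec j$ precisely when either $i<j$ and $(i,j)\notin R$, or $j<i$ and $(j,i)\in R$. By construction exactly one of $i\prec j$ and $j\prec i$ holds for each unordered pair, so $\prec$ is total and antisymmetric, and the only thing that needs checking is transitivity. Assume $i\prec j$ and $j\prec k$, and split into the six cases given by the relative order of $i,j,k$ as integers; in each case one translates $\prec$ into statements about membership of the three pairs in $R$ and deduces $i\prec k$ from a single application of (i) or (ii). For instance, if $j<k<i$ the hypotheses read $(j,i)\in R$ and $(j,k)\notin R$ while the goal is $(k,i)\in R$: were it false, (ii) applied to $(j,i)\in R$ with $j<k<i$ would force $(j,k)\in R$ or $(k,i)\in R$, a contradiction. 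If $i<k<j$ the hypotheses read $(i,j)\notin R$ and $(k,j)\in R$, the goal is $(i,k)\notin R$, and if it failed then (i) applied to $(i,k)\in R$ and $(k,j)\in R$ would force $(i,j)\in R$, again a contradiction. The remaining four orderings are of exactly the same shape, each consuming precisely one of the two axioms.

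Once $\prec$ is known to be a strict total order on the $n$-element set $\{1,\dots,n\}$, I would let $\pi(i)$ be the rank of $i$, that is, the number of $k$ with $k\prec i$ or $k=i$; this is a bijection onto $\{1,\dots,n\}$ satisfying $\pi(i)<\pi(j)\iff i\prec j$. Then for $i<j$ we get $(i,j)\in R_\pi\iff\pi(i)>\pi(j)\iff j\prec i\iff(i,j)\in R$, the last equivalence being the definition of $\prec$, so $R_\pi=R$. The main obstacle is not conceptual but organisational: the transitivity check forces one to keep track, across the six orderings of $\{i,j,k\}$, of which of (i) and (ii) is in play and of the fact that $\prec$ reverses the sense of ``belongs to $R$'' according to whether the smaller or larger index appears first; no individual case is hard, but it is easy to slip a direction.
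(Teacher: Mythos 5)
Your proof is correct. Note, however, that the paper does not prove this statement at all: it is quoted verbatim as Lemma 9.1.6 of Epstein et al., \emph{Word Processing in Groups}, so there is no internal proof to compare against. Your argument is the natural self-contained one: the forward direction is the routine check you describe, and for the converse your relation $\prec$ (namely $i\prec j$ iff either $i<j$ and $(i,j)\notin R$, or $j<i$ and $(j,i)\in R$) is total and antisymmetric by construction, and I have checked that all six orderings of $\{i,j,k\}$ in the transitivity verification go through exactly as you indicate --- each one is discharged by a single application of (i) or (ii), including the four cases you leave implicit ($i<j<k$ and $j<i<k$ use (ii) and (i) respectively in contrapositive form, $k<i<j$ uses (ii) directly, and $k<j<i$ uses (i) directly). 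Taking $\pi(i)$ to be the rank of $i$ under $\prec$ then recovers $R=R_\pi$. The only caveat is bookkeeping, which you have flagged yourself; there is no gap.
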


\par Now we will define (\cite[p. 186]{EpThur}) a very important concept of non-repeating braid (=simple braid).

\begin{definition}
Recall that our set of generators $S_n$ includes only positive crossings; the positive braid monoid is denoted by $B^+  =B_n^+$. We call a positive braid non-repeating if any two of its strands cross at most once. We define $D =D_n \subset B^+_n$ as the set of classes of non-repeating braids.
\end{definition}

The following lemma summarizes all the above mentioned concepts and notations.

\begin{lemma}\cite[Lemma 9.1.10 and Lemma 9.1.11]{EpThur}\label{9.1.11}
The homomorphism $p:B_n^+ \to \mathfrak{S}_n$ is restricted to a bijection $D \to \mathfrak{S}_n$. A positive braid $B$ is non-repeating if and only if $|B| = |p(B)|$ (here $|?|$ means the length of a word). If a non-repeating braid maps to a permutation $\pi$, two strands $i$ and $j$ cross if and only if $(i,j) \in R_\pi$.
\end{lemma}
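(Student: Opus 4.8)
The plan is to split the statement into three pieces and handle them in order: a length inequality valid for every positive braid, the characterization of non-repeating braids (together with the description of which strands cross), and finally the bijectivity of $p|_D$. First I would observe that $B_n^+$ is homogeneous, since both defining relations preserve word length, so that $|B|$ is well defined for $B\in B_n^+$. Then, given a positive word $w=\sigma_{i_1}\cdots\sigma_{i_\ell}$ representing $B$, I would read it from top to bottom while keeping track of the current left-to-right order of the $n$ strands: the letter $\sigma_{i_k}$ transposes the strands occupying positions $i_k,i_k+1$ and is recorded as one crossing of exactly those two strands. Labelling strands by their top endpoints and writing $N_{a,b}$ (for $a<b$) for the number of recorded crossings between strand $a$ and strand $b$, one gets $\sum_{a<b}N_{a,b}=\ell=|B|$. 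Each crossing of $a$ and $b$ toggles their relative order; they start with $a$ to the left of $b$ and, at the bottom, strand $a$ is to the left of strand $b$ iff $\pi(a)<\pi(b)$, where $\pi:=p(B)$. Hence $N_{a,b}$ is odd exactly when $(a,b)\in R_\pi$, and in that case $N_{a,b}\ge 1$. Summing yields $|B|\ge |R_\pi|$, and since the Coxeter length of $\pi$ in $\m{S}_n$ equals $|R_\pi|$ (any reduced expression realizes precisely the inversions, by Lemma \ref{criteria}), this is exactly $|B|\ge|p(B)|$.

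Next I would extract the two equivalences from this analysis. If $|B|=|p(B)|$, then from $\sum_{a<b}N_{a,b}=|B|$ together with the parity statement we must have $N_{a,b}\in\{0,1\}$ for every pair, with $N_{a,b}=1$ iff $(a,b)\in R_\pi$; the first assertion says precisely that no two strands cross more than once, i.e. $B$ is non-repeating, and the second is the final clause of the lemma. Conversely, if $B$ is non-repeating then every $N_{a,b}\le 1$, so the parity statement forces $N_{a,b}=1$ exactly on the inversions, whence $\sum_{a<b}N_{a,b}=|R_\pi|=|p(B)|$, that is $|B|=|p(B)|$. This settles the second and third sentences of the lemma simultaneously.

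Finally I would prove that $p$ restricts to a bijection $D\to\m{S}_n$. For surjectivity, given $\pi\in\m{S}_n$ choose a reduced expression $\pi=s_{j_1}\cdots s_{j_m}$ with $m=|R_\pi|$ (one exists by Lemma \ref{criteria}) and set $B:=\sigma_{j_1}\cdots\sigma_{j_m}\in B_n^+$; then $p(B)=\pi$ and $|B|\le m=|p(B)|$, while $|B|\ge|p(B)|$ by the inequality above, so $|B|=|p(B)|$ and $B\in D$. For injectivity, suppose $B,B'\in D$ both map to $\pi$; by the previous paragraph each is represented by a \emph{reduced} expression for $\pi$. The main obstacle is precisely upgrading ``same permutation'' to ``same positive braid'', and here I would invoke the word property of Coxeter groups (Matsumoto--Tits): any two reduced expressions of $\pi$ are connected by a finite sequence of the braid relations $s_as_{a+1}s_a=s_{a+1}s_as_{a+1}$ and the commutations $s_as_b=s_bs_a$ for $|a-b|>1$, never using $s_a^2=1$. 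Each such move is verbatim one of the defining relations of $B_n^+$, so the identical sequence of moves proves $B=B'$ in $B_n^+$. Hence $p|_D$ is injective, therefore bijective, and its inverse is the map $p^{-1}$ sending a permutation to the class of any of its reduced expressions. (Alternatively, injectivity can be obtained from the embedding $B_n^+\hookrightarrow B_n$ and Garside normal forms, but the Coxeter word property is the cleanest route.)
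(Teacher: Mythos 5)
The paper offers no proof of this statement to compare against: it is quoted directly from Lemmas 9.1.10 and 9.1.11 of \cite{EpThur} and used as a black box. Judged on its own, your argument is correct and is essentially the standard proof. The crossing-count bookkeeping ($N_{a,b}$ odd iff $(a,b)\in R_\pi$, $\sum_{a<b}N_{a,b}=|B|$) yields $|B|\ge|p(B)|$ and, by a counting argument, both the characterization of non-repeating braids and the final clause about which strands cross; injectivity of $p|_D$ is correctly reduced to the Matsumoto--Tits word property, which lifts to $B_n^+$ precisely because the moves connecting two reduced expressions never use $s_a^2=1$ and hence are instances of the positive braid relations. Two small points of hygiene. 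First, the fact that the Coxeter length of $\pi$ equals $|R_\pi|$ and that a reduced expression of that length exists is not a consequence of Lemma \ref{criteria}, which only characterizes which sets of pairs arise as inversion sets; you should either cite the standard length-equals-inversions fact or prove it by the usual induction (if $\pi\ne\mathrm{id}$ there is a descent $\pi(j)>\pi(j+1)$, and $\pi s_j$ has exactly one fewer inversion). Second, in the surjectivity step the inequality $|B|\le m$ can simply be the equality $|B|=m$, since you already observed that the positive braid monoid is homogeneous. Neither point affects the validity of the argument.
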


\begin{example}
Let us consider the following permutation
\[
\pi = \begin{pmatrix}1&2&3&4&5&6 \\ 4&2&6&1&5&3 \end{pmatrix} \in \m{S}_6.
\]

We obtain

\begin{align*}
& \begin{cases}1<2,\\ \pi(1) > \pi(2),\end{cases} \quad \begin{cases}1<4, \\ \pi(1) > \pi(4),\end{cases} \quad  \begin{cases}1<6, \\ \pi(1) > \pi(6),\end{cases} \quad  \begin{cases}2<4, \\ \pi(2) > \pi(4),\end{cases}\\
& \begin{cases}3<4,\\ \pi(3) > \pi(4),\end{cases} \quad \begin{cases}3<5,\\ \pi(3) > \pi(5),\end{cases} \quad \begin{cases}3<6,\\ \pi(3) > \pi(6), \end{cases} \quad \begin{cases}5<6,\\ \pi(5) > \pi(6).\end{cases}
\end{align*}
hence

\[
R(\pi) =  \{(1,2),(1,4),(1,6),(2,4),(3,4),(3,5),(3,6),(5,6)\},
\]
and we get a non-repeating braid is shown in {\sc Figure} \ref{dem1}.

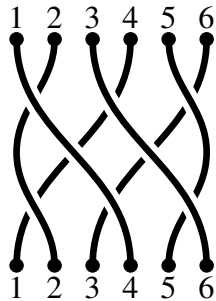
\begin{figure}[h!]
\begin{center}
\begin{tikzpicture}
\draw [line width = 2.5] (2.5,0) to [out = 270, in = 90] (1,-3);
\draw [line width = 7,white] (2,0) to [out = 270, in = 90] (2.5,-1.5) to [out = 270, in = 90] (2,-3);
\draw [line width = 2.5] (2,0) to [out = 270, in = 90] (2.5,-1.5) to [out = 270, in = 90] (2,-3);
\draw [line width = 7,white] (1.5,0) to [out = 270, in = 90] (0,-3);
\draw [line width = 2.5] (1.5,0) to [out = 270, in = 90] (0,-3);
\draw [line width = 7,white] (1,0) to [out = 270, in = 90] (2.5,-3);
\draw [line width = 2.5] (1,0) to [out = 270, in = 90] (2.5,-3);
\draw [line width = 7,white] (0.5,0) to [out = 270, in = 90] (0,-1.5) to [out = 270, in = 90] (0.5,-3);
\draw [line width = 2.5] (0.5,0) to [out = 270, in = 90] (0,-1.5) to [out = 270, in = 90] (0.5,-3);
\draw [line width = 7,white] (0,0) to [out = 270, in = 90] (1.5,-3);
\draw [line width = 2.5] (0,0) to [out = 270, in = 90] (1.5,-3);
\draw [fill](0,0) circle (2.5pt);
\draw [fill](0.5,0) circle (2.5pt);
\draw [fill](1,0) circle (2.5pt);
\draw [fill](1.5,0) circle (2.5pt);
\draw [fill](2,0) circle (2.5pt);
\draw [fill](2.5,0) circle (2.5pt);
\draw [fill](0,-3) circle (2.5pt);
\draw [fill](0.5,-3) circle (2.5pt);
\draw [fill](1,-3) circle (2.5pt);
\draw [fill](1.5,-3) circle (2.5pt);
\draw [fill](2,-3) circle (2.5pt);
\draw [fill](2.5,-3) circle (2.5pt);
\node[above] at (0,0){$1$};
\node[above] at (0.5,0){$2$};
\node[above] at (1,0){$3$};
\node[above] at (1.5,0){$4$};
\node[above] at (2,0){$5$};
\node[above] at (2.5,0){$6$};
\node[below] at (0,-3){$1$};
\node[below] at (0.5,-3){$2$};
\node[below] at (1,-3){$3$};
\node[below] at (1.5,-3){$4$};
\node[below] at (2,-3){$5$};
\node[below] at (2.5,-3){$6$};
\end{tikzpicture}
\caption{The simple braid $R_\pi$ is shown.}\label{dem1}
\end{center}
\end{figure}
\end{example}

\section{Meanders and its Gauss Diagrams}
In this section we deal with closed meanders. We show that any closed meander defines a special sort of Gauss diagrams (= Gauss diagrams of meanders) and then we will see that these diagrams are completely determined by its matrixes (= meander matrix) {\it i.e.,} there is a one-to-one correspondence between meander matrixes satisfy the conditions of Theorem \ref{Gauss->matrix}. It allows us to describe an algorithm constructs all closed meanders.

\begin{definition}
  A (closed) meander is a planar configuration consisting of a simple closed curve and on orientied line, that cross finitely many times and intersect only transversally. Two meanders are equivalent if there exists a homeomorphism of the plane that maps one to the other.
\end{definition}

\begin{figure}
  \begin{tikzpicture}
    \draw [line width = 2] (-0.3,0) to  (7.3,0);
    \draw [fill](0,0) circle (2.5pt);
    \draw [fill](1,0) circle (2.5pt);
    \draw [fill](2,0) circle (2.5pt);
    \draw [fill](3,0) circle (2.5pt);
    \draw [fill](4,0) circle (2.5pt);
    \draw [fill](5,0) circle (2.5pt);
    \draw [fill](6,0) circle (2.5pt);
    \draw [fill](7,0) circle (2.5pt);
    \draw [line width = 2] (0,0) to [out = 270, in = 270] (3,0);
    \draw [line width = 2] (3,0) to [out = 90, in = 90] (4,0);
    \draw [line width = 2] (4,0) to [out = 270, in = 270] (5,0);
    \draw [line width = 2] (5,0) to [out = 90, in = 90] (2,0);
    \draw [line width = 2] (2,0) to [out = 270, in = 270] (1,0);
    \draw [line width = 2] (1,0) to [out = 90, in = 90] (6,0);
    \draw [line width = 2] (6,0) to [out = 270, in = 270] (7,0);
    \draw [line width = 2] (7,0) to [out =90, in = 90] (0,0);
    \node[above left] at (0,0){$0$};
    \node[above left] at (1,0){$1$};
    \node[above left] at (2,0){$2$};
    \node[above left] at (3,0){$3$};
    \node[above right] at (4,0){$4$};
    \node[above right] at (5,0){$5$};
    \node[above right] at (6,0){$6$};
    \node[above right] at (7,0){$0$};
  \end{tikzpicture}
  \caption{An Example of a closed meander.}\label{meandr1}
\end{figure}
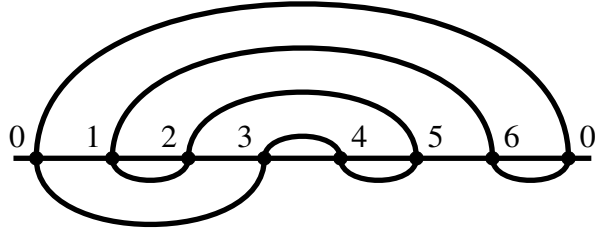

\begin{definition}\label{meandrmatrix}
  A matrix $\mathbf{M} = (\mathbf{m}_{ij}) \in \mathsf{Mat}_{n+1}(\mathbb{Z}_2)$ is called meander matrix if the following conditions hold:
  \begin{itemize}
    \item[(1)] its main diagonal contains only $0$,
    \item[(2)] $\mathbf{M}$ is symmetric,
    \item[(3)] it has at leas one string $\mathbf{m}_i = (\mathbf{m}_{i1}, \ldots, \mathbf{m}_{i, n+1})$ with $\mathbf{m}_{ik} = 1$ for every $1 \le k\le n+1$, $k \ne i$,
    \item[(4)] $\mathbf{M}$ satisfies the conditions of Theorem \ref{Gauss->matrix},
    \item[(5)] if $\mathbf{m}_{ij}=1$ and $\mathbf{m}_{jk} = 1$ then $\mathbf{m}_{ik}=1$ for all $1 \le i,j,k \le n+1$,
    \item[(6)] if $\mathbf{m}_{ik}=1$, then $\mathbf{m}_{ij}=1$ or $\mathbf{m}_{jk}=1$ for every $j$ with $1 \le i<j<k \le n+1$.
  \end{itemize}
  \end{definition}

\begin{figure}
  \begin{tikzpicture}[line width = 1.5]
   \draw[line width =3] (0,0) circle (2);
   \draw (0:2) -- (180:2);
   \fill (0:2) circle (3pt) node[right] {$0$};
   \fill (160:2) circle (3pt) node[above] {$1$};
   \fill (130:2) circle (3pt) node[above] {$2$};
   \fill (100:2) circle (3pt) node[above] {$3$};
   \fill (80:2) circle (3pt) node[above] {$4$};
   \fill (60:2) circle (3pt) node[above] {$5$};
   \fill (30:2) circle (3pt) node[above] {$6$};
   \fill (180:2) circle (3pt) node[left] {$0$};
   \fill (200:2) circle (3pt) node[left] {$6$};
   \fill (220:2) circle (3pt) node[below] {$1$};
   \fill (260:2) circle (3pt) node[below] {$2$};
   \fill (280:2) circle (3pt) node[below] {$5$};
   \fill (310:2) circle (3pt) node[below] {$4$};
   \fill (340:2) circle (3pt) node[below] {$3$};
   \draw (160:2) -- (220:2);
   \draw (130:2) -- (260:2);
   \draw (100:2) -- (340:2);
   \draw (80:2) -- (310:2);
   \draw (60:2) -- (280:2);
   \draw (160:2) -- (220:2);
   \draw (30:2) -- (200:2);
  \end{tikzpicture}
  \caption{A Gauss diagram of the meander, which is shown in {\sc Figure .\ref{meandr1}}, is shown}\label{GDm1}
\end{figure}
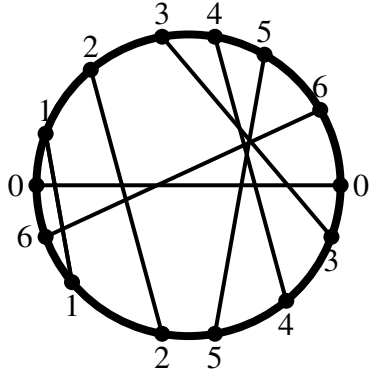

\begin{proposition}\label{matrix=meandr}
  Every meander matrix determines a unique closed meander (up to the equivalence).
\end{proposition}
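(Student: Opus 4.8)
The plan is to pass from a meander matrix $\mathbf{M}\in\mathsf{Mat}_{n+1}(\mathbb{Z}_2)$ to a permutation, from the permutation to a Gauss diagram, and from the Gauss diagram to a planar meander, keeping track at each step of how much of the data is pinned down. Conditions (1) and (2) of Definition \ref{meandrmatrix} merely record that $\mathbf{M}$ is the intersection matrix of \emph{some} chord diagram on $n+1$ chords; the real content is carried by (3), (5), (6), and condition (4) will come for free once a realization has been exhibited.

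First I would produce the permutation. Conditions (5) and (6) are literally conditions i) and ii) of Lemma \ref{criteria} applied to the set
\[
R_{\mathbf M}:=\{(i,j)\mid 1\le i<j\le n+1,\ \mathbf{m}_{ij}=1\},
\]
so $R_{\mathbf M}=R_\pi$ for a permutation $\pi\in\m{S}_{n+1}$, which is unique because a permutation is determined by its inversion set. By Lemma \ref{9.1.11}, $\pi$ is represented by a unique non-repeating braid, whose strands $i$ and $j$ cross exactly when $\mathbf{m}_{ij}=1$. Next I would recover the Gauss diagram. By condition (3) there is a chord, say $\m{b}$, with all-ones row, so in any Gauss diagram with intersection matrix $\mathbf{M}$ the chord $\m{b}$ is crossed by every other chord; hence it separates the circle into two arcs and each of the remaining $n$ chords has exactly one endpoint on each arc. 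Such a Gauss diagram is therefore a ``two--row'' arrangement of those $n$ chords (one endpoint of each on the upper arc, one on the lower arc, two chords crossing iff their endpoints interleave); the interleaving pattern is an inversion set, so by the previous paragraph it is realized by the order coming from $\pi$, and by the uniqueness in Lemma \ref{criteria}/Lemma \ref{9.1.11} there is only one such arrangement. Call the resulting Gauss diagram $\m{G}_{\mathbf M}$; by construction $\mathsf{M}(\m{G}_{\mathbf M})=\mathbf{M}$.

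Then I would build the meander. The cleanest route I see is via the simple braid: taking the non-repeating braid representing $\pi$ and closing it up meandrically -- its strands become the disjoint arcs of a simple closed curve $\gamma$, while the two horizontal edges together with the closure arcs assemble into a single embedded line $L$, which I compactify to $\bar L\cong S^1$ -- produces a planar configuration $\gamma\cup L$; since the braid is non-repeating, $\gamma$ is embedded. Traversing first $\gamma$ and then $\bar L$ from a common crossing gives a genuine generic immersion $\delta:S^1\to\mathbb{R}^2$ whose double points are exactly the crossings of $\gamma$ with $\bar L$; I would check that the Gauss diagram of $\delta$ is precisely $\m{G}_{\mathbf M}$, with the basepoint crossing playing the role of $\m{b}$. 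In particular $\m{G}_{\mathbf M}$ is realizable, so condition (4) holds automatically, and $\gamma\cup L$ is connected, i.e.\ is a closed meander -- connectivity being exactly what condition (3) provides. For uniqueness up to equivalence: any two closed meanders with matrix $\mathbf{M}$ have Gauss diagram $\m{G}_{\mathbf M}$ by the reconstruction above, hence carry the same pair of non-crossing matchings above and below the line; a non-crossing matching has an essentially unique planar realization, so the two meanders differ by a homeomorphism of the plane and are equivalent.

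I expect the main obstacle to be making the second and third steps rigorous: proving that conditions (3), (5), (6) pin down not only the abstract intersection graph but the actual cyclic position of every endpoint on the circle -- the Remark following Theorem \ref{Gauss->matrix} stresses that an intersection matrix in general ``knows only intersections but not positions of chords'', and the point is that for the meander class this ambiguity disappears, thanks to the Epstein--Thurston rigidity of simple braids (Lemma \ref{9.1.11}) -- together with the bookkeeping needed to confirm that the meandric closure of that braid really is a single closed curve with Gauss diagram $\m{G}_{\mathbf M}$. The remaining points (the congruences of condition (4), connectivity, and the uniqueness of the planar realization of a non-crossing matching) are routine.
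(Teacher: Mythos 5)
Your first two steps coincide with the paper's own argument: both feed conditions (5) and (6) into Lemma \ref{criteria} and Lemma \ref{9.1.11} to extract a unique permutation $\pi$ and its non-repeating braid, and both use condition (3) to single out the all-crossing chord and hence the ``two-row'' Gauss diagram $\m{G}_{\mathbf M}$; your uniqueness discussion is in fact more explicit than the paper's. The genuine gap is in the existence step. The paper invokes condition (4) at precisely this point to conclude that $\m{G}_{\mathbf M}$ is realizable by a plane curve, and only then reads off the meander from the loop of the distinguished chord. You instead declare that condition (4) ``will come for free once a realization has been exhibited'' and try to exhibit the realization without it. This cannot work, because conditions (1), (2), (3), (5), (6) alone do not imply realizability. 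Concretely, take $n+1=3$ with $\mathbf{m}_{12}=\mathbf{m}_{13}=1$ and $\mathbf{m}_{23}=0$: the diagonal is zero, the matrix is symmetric, row $1$ is all ones, and $\{(1,2),(1,3)\}$ is the inversion set of the permutation $(3,1,2)$, so (5) and (6) hold by Lemma \ref{criteria}; yet $\langle\mathbf{m}_2,\mathbf{m}_2\rangle\equiv 1\bmod 2$, condition (4) fails, chord $2$ is crossed by an odd number of chords, and no plane curve --- a fortiori no meander --- has this Gauss diagram. Any construction that never consults condition (4) would have to succeed on this input, so it must be wrong. Geometrically, what condition (4) excludes is exactly the failure mode of your closure: the families of arcs dictated by $\pi$ on one side of the line are forced to intersect one another, creating double points not recorded in $\mathbf{M}$ (virtual crossings, in the language of the rest of the paper).

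There is also an internal inconsistency in the ``meandric closure'' itself. You want the braid strands to become arcs of an \emph{embedded} curve $\gamma$ while the horizontal edges and closure arcs become the line $L$, and you want the double points of $\delta$ to be exactly the crossings of $\gamma$ with $\bar L$. But the crossings of the non-repeating braid are crossings between strands, so under your dictionary they would be self-crossings of $\gamma$, contradicting embeddedness; in the correct dictionary the braid crossings correspond to intersections of \emph{chords} of $\m{G}_{\mathbf M}$, which are not points of the plane curve at all. The braid is the right tool for pinning down the order of endpoints on the Gauss circle (which is how the paper uses it), not for building the meander directly. To repair the argument, keep your first two steps, then follow the paper: use condition (4), via Theorem \ref{Gauss->matrix} and Theorem \ref{themain}, to establish that $\m{G}_{\mathbf M}$ is realizable, and obtain the meander as the plane curve together with the loop of the distinguished chord.
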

\begin{proof}
  Let $\mathbf{M}$ be a meander matrix. Take $n+1$ lines on the plane $\mathbb{R}^2$ and mark them by numbers $1,2,\ldots,n+1$. Conditions (1) and (2) imply that the matrix $\mathbf{M}$ can be considered as a matrix of intersection of the lines: $\mathbf{m}_{ij} = 1$ if the lines with numbers $i$, $j$ intersect and $\mathbf{m}_{ij} = 0$ in otherwise. Further, from condition (3) it follows that there is a line is marked by, say $r$, such that it crossed by all other lines. Next, by Lemmas \ref{criteria}, \ref{9.1.11}, and by conditions (5), (6), all lines with exception of the line is marked by $r$ can be considered as a non-repeating braid. This gives rise a Gauss diagram (see {\sc Figure} \ref{GDm1}. Condition (4) implies that this Gauss diagram is releasable by a plane curve, say $\mathscr{C}$. Finally, the chord, which is marked by $r$, correspondences to a loop $\mathscr{C}(r)$ of $\mathscr{C}$. It is obviously that this loop contains all crossing of the curve $\mathscr{C}$. We then get the meander $\mathscr{C}$ up to the equivalence, as claimed.
\end{proof}

\section{Construction of Meanders}

We start with the following example.

\begin{example}
  Let us consider the meander is shown in {\sc Figure} \ref{meandr1}, its Gauss diagram is shown in {\sc Figure} \ref{GDm1}. The correspondence meander matrix has the following form

  \[
 \bordermatrix{
  &   0           & 1 & 2 & 3 & 4 & 5 & 6 \cr
  0 &  \mathbf{0} & 1 & 1 & 1 & 1 & 1 & 1 \cr
  1 &  1          &\mathbf{0} & 0& 0 & 0 & 0 & 1 \cr
  2 & 1 & 0 & \mathbf{0} & 0 & 0 & 0 & 1 \cr
  3 & 1 & 0 & 0 & \mathbf{0} & 1 & 1 & 1 \cr
  4 & 1 & 0 & 0 & 1 & \mathbf{0} &  1 & 1 \cr
  5 & 1 &0 &0 & 1 & 1 & \mathbf{0} & 1 \cr
  6 & 1 &1 &1 & 1 & 1 & 1 & \mathbf{0} \cr
  }
\]

This matrix is constructed as follows. Take a table with seven rows and seven columns. Number its columns from left to right and rows from top to bottom by numbers $0,1,\ldots, 6$. Fill the main diagonal by zeros.

\begin{center}
\begin{tabular}[t]{|c|c|c|c|c|c|c|c|}
 \hline
 & 0 & 1 &2 &3 &4 &5 & 6\\
 \hline
 0 & $\mathbf{0}$ & 1 & 1 & 1 &1 &1 &1 \\
 \hline
 1 & 1 & $\mathbf{0}$ & &&&& \\
 \hline
 2 & 1 && $\mathbf{0}$ &&&& \\
 \hline
 3 & 1&&& $\mathbf{0}$ &&& \\
 \hline
 4 & 1&&&& $\mathbf{0}$ && \\
 \hline
 5 & 1&&&&& $\mathbf{0}$ & \\
 \hline
 6 & 1&&&&&& $\mathbf{0}$  \\
 \hline
\end{tabular}
\end{center}

Since we start from $0$ and go then to $3$, then the chord $3$ intersects chords $0,3,5,6$, and hence we get
\begin{center}
\begin{tabular}[t]{|c|c|c|c|c|c|c|c|}
 \hline
 & 0 & 1 &2 &3 &4 &5 & 6\\
 \hline
 0 & $\mathbf{0}$ & 1 & 1 & 1 &1 &1 &1 \\
 \hline
 1 & 1 & $\mathbf{0}$ & & 0 & & & \\
 \hline
 2 & 1 && $\mathbf{0}$ & 0&&& \\
 \hline
 3 & 1 & 0 & 0 & $\mathbf{0}$ & 1 & 1& 1 \\
 \hline
 4 & 1&&&1& $\mathbf{0}$ && \\
 \hline
 5 & 1&&&1&& $\mathbf{0}$ & \\
 \hline
 6 & 1&&&1&&& $\mathbf{0}$  \\
 \hline
\end{tabular}
\end{center}

Next, we go to $4$ and then the chord $4$ crosses chords $0,3,5,6$ and we get
\begin{center}
\begin{tabular}[t]{|c|c|c|c|c|c|c|c|}
 \hline
 & 0 & 1 &2 &3 &4 &5 & 6\\
 \hline
 0 & $\mathbf{0}$ & 1 & 1 & 1 &1 &1 &1 \\
 \hline
 1 & 1 & $\mathbf{0}$ & & 0 & 0& & \\
 \hline
 2 & 1 && $\mathbf{0}$ & 0&0&& \\
 \hline
 3 & 1 & 0 & 0 & $\mathbf{0}$ & 1 & 1& 1 \\
 \hline
 4 & 1& 0&0 &1& $\mathbf{0}$ &1&1 \\
 \hline
 5 & 1&&&1&1& $\mathbf{0}$ & \\
 \hline
 6 & 1&&&1&1&& $\mathbf{0}$  \\
 \hline
\end{tabular}
\end{center}
and {\it etc.} Note that we changed the parity of numbers of strings.
\end{example}

\begin{lemma}\label{lemmaaboutfilling}
  To make by step-by-step a meander matrix, the following hold:
  \begin{itemize}
    \item[(1)] every string is filled as follows: every its empty cell is to the left of the main diagonal is filled by $0$ and every its empty cell is to the right of the main diagonal is filled by $1$,
    \item[(2)] to fill the next string its number has to have another parity than a number of the previous string.
  \end{itemize}
\end{lemma}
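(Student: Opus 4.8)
The plan is to read Lemma \ref{lemmaaboutfilling} as the precise statement of the procedure illustrated in the preceding Example, and to justify it against the structure uncovered in Proposition \ref{matrix=meandr}. Fix a closed meander, i.e. a line $\ell$ together with a simple closed curve $\gamma$ crossing it, and label the crossings $0,1,\dots,n$ in the order they are met along $\ell$, taking for $0$ the crossing lying on the distinguished chord $r$ of Proposition \ref{matrix=meandr} (the chord met by every other chord). The rows and columns of $\mathbf M$ are indexed $0,\dots,n$ in this order, and ``building $\mathbf M$ step by step'' means filling the rows in the order $0=\gamma_0,\gamma_1,\dots,\gamma_n$ in which the crossings are met along $\gamma$. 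Since $\mathbf M$ is symmetric, by the time we fill the row of $\gamma_m$ the entries in the columns of the already-processed crossings $\gamma_0,\dots,\gamma_{m-1}$ (and the diagonal entry) are already present; hence the cells still empty in that row are exactly those in the columns of the not-yet-processed crossings $\gamma_{m+1},\dots,\gamma_n$, and part (1) is equivalent to the claim that, for $k>m$, the chords corresponding to $\gamma_m$ and $\gamma_k$ cross precisely when $\gamma_m$ precedes $\gamma_k$ along $\ell$ (equivalently, when the column index exceeds the row index).

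To prove that equivalence I would use the Gauss word of the plane curve associated with the meander: traversing $\ell$ once and then $\gamma$ once, the crossings occur as two blocks, the first listing them in the order they appear along $\ell$ and the second in the order they appear along $\gamma$. For a double-occurrence word of this shape, the chords of $c$ and $c'$ interleave if and only if the first block and the second block put $c$ and $c'$ in the same relative order, that is, if and only if $\ell$ and $\gamma$ order the pair $\{c,c'\}$ the same way. Applied to the pair $\{\gamma_m,\gamma_k\}$ (where $\gamma$ already puts $\gamma_m$ before $\gamma_k$), this shows that their chords cross exactly when $\ell$ puts $\gamma_m$ before $\gamma_k$, i.e. exactly when $\gamma_k$ lies to the right of $\gamma_m$ on $\ell$. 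In matrix terms: fill an empty cell of the current row with $1$ when it is to the right of the diagonal and with $0$ when it is to the left, which is part (1).

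For part (2), two crossings $\gamma_m,\gamma_{m+1}$ consecutive along $\gamma$ are joined by an arc $\alpha$ of $\gamma$ that meets $\ell$ only at its endpoints, hence lies in one closed half-plane; together with the segment $\sigma$ of $\ell$ between $\gamma_m$ and $\gamma_{m+1}$ it bounds a disk $D$. The complementary arc $\beta=\gamma\setminus\alpha$ leaves $\gamma_{m+1}$ and arrives at $\gamma_m$ on the side of $\ell$ opposite to $\alpha$, hence outside $D$, and, being disjoint from $\alpha$, it can meet $\partial D$ only along $\sigma$, i.e. exactly at the meander crossings whose label lies strictly between those of $\gamma_m$ and $\gamma_{m+1}$. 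Each such crossing switches $\beta$ between the inside and the outside of $D$; since $\beta$ starts and ends outside, their number is even, so the labels of $\gamma_m$ and $\gamma_{m+1}$ differ by an odd amount and therefore have opposite parity, which is part (2). The step I expect to be the touchiest is the interleaving criterion behind part (1): one has to pin down that the Gauss word of the meander's plane curve really does split into an ``$\ell$-block'' and a ``$\gamma$-block'' with those orders, and keep track of where the basepoint and the special chord $r$ sit; once that is in hand, both ``same order $\Rightarrow$ interleave'' and the Jordan-curve count in part (2) are routine, the only delicate point in the latter being that a point just above the open segment $\sigma$ genuinely lies inside $D$.
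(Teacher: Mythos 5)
Your proof is correct and follows essentially the same route as the paper: part (2) is the paper's own Jordan-curve parity argument (the paper states it contrapositively --- an odd number of crossings strictly between two consecutive returns to the line would force a self-intersection of the simple closed curve), and part (1) rests on the same observation that the rows are filled in the order in which the crossings are met along the closed curve. In fact you go further than the paper on part (1): where the paper merely asserts, by pointing at the worked example, that the chord being processed must meet exactly the not-yet-processed chords of larger label, your two-block Gauss word together with the interleaving criterion supplies the justification that the paper leaves implicit.
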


\begin{proof}
  Indeed, assume that we go from $i$ to $j$ in a meander. If they have the same parity then there are odd number of points between them {\sc Figure.\ref{oddbetween}}. This implies that the trajectory has to have self intersection points, {\it i.e.,} we get a no meander. Hence the numbers $i$, $j$ have to have different parity. This gives condition (2).

  \begin{figure}
  \begin{tikzpicture}
    \draw [line width = 2] (-0.3,0) to  (4.3,0);
    \draw [fill](0,0) circle (2.5pt);
    \draw [fill](1,0) circle (2.5pt);
    \draw [fill](2,0) circle (2.5pt);
    \draw [fill](3,0) circle (2.5pt);
    \draw [fill](4,0) circle (2.5pt);
    \draw [line width = 2] (0,0) to [out = 270, in = 270] (4,0);
    \draw [line width = 2] (0, 0.3) to (0,0);
    \draw [line width = 2] (4, 0.3) to (4,0);
    \node[below left] at (0,0){$i$};
    \node[below right] at (4,0){$j$};
  \end{tikzpicture}
  \caption{If $i,j$ have the same parity there are odd number of points between them then.}\label{oddbetween}
\end{figure}
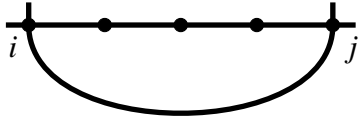

  Next, as we have already seen in the previous example, after choosing a string with number, say, $i$, the corresponding chord has to intersect chords with numbers $0,i+1,\ldots, n+1$, here $n$ is the number of all chords. This gives condition (1).
\end{proof}

\begin{definition}
  A string which is filled as in condition (1) of Lemma \ref{lemmaaboutfilling} is called $\Delta$-filled.
\end{definition}

\begin{lemma}\label{equations}
  Let in a step-by-step filling of a matrix a string with number $i$ is $\Delta$-filled. Then
  \[
   \bigl< \mathsf{i,j}  \bigr> \equiv 1 \bmod(2),
  \]
  for any $j>i$.
\end{lemma}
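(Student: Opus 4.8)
The plan is to derive the identity from Theorem~\ref{Gauss->matrix}, using two elementary remarks about the step‑by‑step filling. \emph{Remark 1}: the string filled first is the distinguished one, so the chord numbered $0$ crosses every other chord; equivalently $\mathsf m_{0t}=1$ for every $t\neq 0$. \emph{Remark 2}: if string $i$ has been $\Delta$‑filled and $j>i$ is a string that is filled \emph{after} $i$ (this is the sense in which the inequality $j>i$ is to be understood here), then chords $i$ and $j$ cross. Indeed, the cell of row $i$ in column $j$ receives its value only when the first of the strings $i,j$ to be processed is filled, namely $i$; hence at that moment the cell is still empty, it lies to the right of the main diagonal because $j>i$, so the $\Delta$‑rule assigns it $1$, and by symmetry $\mathsf m_{ij}=\mathsf m_{ji}=1$. (If one dropped the hypothesis that $j$ is filled after $i$ the statement would be false: a string $j>i$ filled \emph{before} $i$ may have $\mathsf m_{ij}=0$, which by Theorem~\ref{Gauss->matrix}(2) forces $\langle\mathsf m_i,\mathsf m_j\rangle\equiv 0$.)

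First I would record the auxiliary congruence $\langle\mathsf m_0,\mathsf m_k\rangle\equiv 1\pmod 2$ for every chord $k\neq 0$. Working over $\mathbb Z_2$ we have $\mathsf m_{kt}^2=\mathsf m_{kt}$, so $\langle\mathsf m_k,\mathsf m_k\rangle=\sum_t\mathsf m_{kt}$; since $\mathsf m_{0t}=1$ for $t\neq 0$ and $\mathsf m_{00}=0$, this gives
\[
\langle\mathsf m_0,\mathsf m_k\rangle=\sum_{t\neq 0}\mathsf m_{kt}=\langle\mathsf m_k,\mathsf m_k\rangle-\mathsf m_{k0}.
\]
By part (1) of Theorem~\ref{Gauss->matrix} the summand $\langle\mathsf m_k,\mathsf m_k\rangle$ is even, while $\mathsf m_{k0}=1$ by Remark~1; hence $\langle\mathsf m_0,\mathsf m_k\rangle\equiv 1\pmod 2$.

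It then remains to treat two cases. If $i=0$, the assertion is precisely the auxiliary congruence with $k=j$. If $i\neq 0$, then by Remark~2 the three chords $0,i,j$ pairwise intersect (chord $0$ meets every chord, and chords $i$ and $j$ meet because $\mathsf m_{ij}=1$), so part (3) of Theorem~\ref{Gauss->matrix} yields
\[
\langle\mathsf m_0,\mathsf m_i\rangle+\langle\mathsf m_0,\mathsf m_j\rangle+\langle\mathsf m_i,\mathsf m_j\rangle\equiv 1\pmod 2;
\]
by the auxiliary congruence the first two terms are each $\equiv 1$, whence $\langle\mathsf m_i,\mathsf m_j\rangle\equiv 1\pmod 2$, as claimed. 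The step I expect to require the most care is Remark~2 and the attendant reading of $j>i$ as ``$j$ is among the strings not yet filled''; once that is in place the argument is a short computation modulo $2$ together with one application each of parts (1) and (3) of Theorem~\ref{Gauss->matrix}, which is legitimate since the object under construction is a meander matrix and hence realizable.
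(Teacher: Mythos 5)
Your argument is correct and follows essentially the same route as the paper: you establish $\langle \mathsf{m}_0,\mathsf{m}_k\rangle\equiv 1\pmod 2$ from $\langle \mathsf{m}_0,\mathsf{m}_k\rangle=\langle \mathsf{m}_k,\mathsf{m}_k\rangle-\mathsf{m}_{k0}$ together with condition (1) of Theorem \ref{Gauss->matrix}, and then cancel these terms in the triple relation $\langle \mathsf{m}_0,\mathsf{m}_i\rangle+\langle \mathsf{m}_0,\mathsf{m}_j\rangle+\langle \mathsf{m}_i,\mathsf{m}_j\rangle\equiv 1$, which is exactly the paper's computation. Your extra care about the meaning of $j>i$ (that the relevant cells are still empty when string $i$ is processed, so the $\Delta$-rule forces $\mathsf{m}_{ij}=1$) only makes explicit what the paper delegates to Lemma \ref{lemmaaboutfilling}.
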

\begin{proof}
  Indeed, by Lemma \ref{lemmaaboutfilling}, $\mathbf{m}_{ij}=1$ for every $j>i$ in the matrix $\mathbf{M}$, and then by condition (3) of Theorem \ref{Gauss->matrix}, $\bigl<\mathsf{0,i} \bigr> + \bigl< \mathsf{0,j} \bigr> + \bigl<\mathsf{i,j} \bigr> \equiv 1 \bmod (2)$. We thus get
    \[
     \bigl<\mathsf{0,i} \bigr> + \bigl< \mathsf{0,j} \bigr> + \bigl<\mathsf{i,j} \bigr>= \bigl< \mathsf{i,j}\bigr>,
    \]
    because of $\bigl<\mathsf{0,i} \bigr> = \bigl<\mathsf{i,i} \bigr> - 1$, $1 \le i \le n$, as claimed.
\end{proof}

\begin{theorem}
  Let $\mathbf{M} \in \mathsf{Mat}_{n}(\mathbb{Z}_2)$ be a symmetric matrix with zero main diagonal. $\mathbf{M}$ is a meander matrix if and only if the following hold:
  \begin{itemize}
    \item[(1)] there is a string $\mathbf{m}_i$ such that $\mathbf{m}_{ij}=1$ if $j \ne i$ and $\mathbf{m}_{ii}=0$,
    \item[(2)] if $\mathbf{m}_{ij} = 1$, $\mathbf{m}_{jk} =1$, then $\mathbf{m}_{ik}=1$, for all $i,j,k \in \{1,2, \ldots, n\}$,
    \item[(3)] if $\mathbf{m}_{ik}=1$ then $\mathbf{m}_{ij}=1$ or $\mathbf{m}_{jk}=1$ for all $i \le j \le k$,
    \item[(4)] for every $1 \le i,j\le n$ we have
    \begin{eqnarray}
      \bigl< \mathbf{m}_i, \mathbf{m}_j \bigr> &\equiv&  0 \bmod(2), \qquad \mbox{if $i=j$ or $\mathbf{m}_{ij} = 0$},\label{<i,j>1}\\
      \bigl< \mathbf{m}_i, \mathbf{m}_j \bigr> &\equiv&  1 \bmod(2), \qquad \mbox{if $\mathbf{m}_{ij} = 1$}.\label{<i,j>2}
    \end{eqnarray}
      \end{itemize}
\end{theorem}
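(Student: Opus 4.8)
The plan is to deduce the statement directly from Definition~\ref{meandrmatrix}, matching the listed conditions and reducing everything to one short congruence computation. First I would observe that, granted the standing hypotheses that $\mathbf{M}$ is symmetric with zero diagonal, conditions (1), (2), (3) here are exactly conditions (3), (5), (6) of Definition~\ref{meandrmatrix} (in (3) one may pass between $i\le j\le k$ and $i<j<k$ freely, since for $j=i$ or $j=k$ the conclusion collapses to the hypothesis $\mathbf{m}_{ik}=1$, recalling $\mathbf{m}_{ii}=0$); the size $n$ here plays the role of $n+1$ there, an immaterial relabeling. Hence the only thing to prove is that, \emph{under hypothesis} (1), condition (4) above is equivalent to the three conditions of Theorem~\ref{Gauss->matrix}, which together form condition (4) of Definition~\ref{meandrmatrix}.

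Here the cases $i=j$ and $\mathbf{m}_{ij}=0$ of \eqref{<i,j>1} are literally conditions (1) and (2) of Theorem~\ref{Gauss->matrix}, so what must be shown is that, granted \eqref{<i,j>1} and hypothesis (1), condition \eqref{<i,j>2} is equivalent to condition (3) of Theorem~\ref{Gauss->matrix}. One direction is immediate and does not even use (1): if \eqref{<i,j>2} holds and $i,j,k$ index three pairwise intersecting chords, then each of $\langle\mathbf{m}_i,\mathbf{m}_j\rangle$, $\langle\mathbf{m}_i,\mathbf{m}_k\rangle$, $\langle\mathbf{m}_j,\mathbf{m}_k\rangle$ is $\equiv 1$, so their sum is $\equiv 1\pmod 2$.

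For the converse I would use the distinguished string furnished by hypothesis (1): say $\mathbf{m}_r$ satisfies $\mathbf{m}_{rt}=1$ for all $t\ne r$. The key identity, already isolated inside the proof of Lemma~\ref{equations}, is that for every $i\ne r$ the quantity $\langle\mathbf{m}_r,\mathbf{m}_i\rangle$ equals $\sum_{t\ne r}\mathbf{m}_{it}$, which — since the entries are $0$ or $1$ — equals $\langle\mathbf{m}_i,\mathbf{m}_i\rangle-\mathbf{m}_{ir}=\langle\mathbf{m}_i,\mathbf{m}_i\rangle-1$, and is therefore $\equiv 1\pmod 2$ by \eqref{<i,j>1}. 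Now take any pair with $\mathbf{m}_{ij}=1$. If $r\in\{i,j\}$ this already gives $\langle\mathbf{m}_i,\mathbf{m}_j\rangle\equiv 1$. Otherwise the chords $r,i,j$ pairwise intersect, so condition (3) of Theorem~\ref{Gauss->matrix} gives $\langle\mathbf{m}_r,\mathbf{m}_i\rangle+\langle\mathbf{m}_r,\mathbf{m}_j\rangle+\langle\mathbf{m}_i,\mathbf{m}_j\rangle\equiv 1$, and substituting the two values just found forces $\langle\mathbf{m}_i,\mathbf{m}_j\rangle\equiv 1+1+1\equiv 1\pmod 2$. This establishes \eqref{<i,j>2}, and hence the equivalence.

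Putting it together: if $\mathbf{M}$ is a meander matrix it satisfies (1)--(3) by the matching of conditions and (4) by the argument above applied to the conditions of Theorem~\ref{Gauss->matrix}; conversely (1)--(4) give back conditions (1)--(6) of Definition~\ref{meandrmatrix}. I do not expect a real obstacle: the computation is elementary and is essentially a repackaging of Lemma~\ref{equations}. The single point that needs care is where hypothesis (1) enters — it is dispensable for one implication but indispensable for the other, because without a chord meeting all the rest, condition (4) is in general strictly stronger than the conditions of Theorem~\ref{Gauss->matrix} (a pairwise intersecting triple whose three scalar products are $\equiv 1,0,0$ satisfies the latter but violates the former).
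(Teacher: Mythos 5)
Your proposal is correct and follows essentially the same route as the paper: reduce the theorem to the equivalence of condition (4) with the three conditions of Theorem~\ref{Gauss->matrix}, and obtain \eqref{<i,j>2} from condition (3) of that theorem via the identity $\langle \mathbf{m}_r,\mathbf{m}_i\rangle=\langle\mathbf{m}_i,\mathbf{m}_i\rangle-1$ for the distinguished all-ones row $\mathbf{m}_r$, which is exactly the computation inside Lemma~\ref{equations} that the paper invokes. Your write-up is in fact tighter than the paper's (which cites \eqref{<i,j>1} where it evidently means \eqref{<i,j>2}), and your closing remark correctly pinpoints that hypothesis (1) is indispensable only for the direction deriving \eqref{<i,j>2}.
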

\begin{proof}
  By Definition \ref{meandrmatrix} and Proposition \ref{matrix=meandr}, it is sufficient to prove that condition (3) of Theorem \ref{Gauss->matrix} is equivalent to the last condition. It is clear that an equality $\bigl< \mathbf{m}_i, \mathbf{m}_i \bigr> \equiv  0 \bmod(2)$ implies that only an even number (including zero) of chords intersect chord correspondences to string $\mathbf{m}_i$. Next, an equality $\mathbf{m}_{ij}=0$ implies that chord $i$ do not intersect chord $j$, and an equality $\mathbf{m}_{ij}=0$ implies that only even number of chords intersect both of chords $i$, $j$. We thus get the first two conditions of Theorem \ref{Gauss->matrix}.

  Further, by Lemma \ref{equations}, from the third condition of Theorem \ref{Gauss->matrix} it follows (\ref{<i,j>1}). Assume now that (\ref{<i,j>1}) holds and consider three pairwise intersecting chords, say $i,j,k$. Then $\bigl< \mathsf{m}_i, \mathsf{m}_j \bigr> \equiv  1 \bmod(2)$, $\bigl< \mathbf{m}_i, \mathbf{m}_k \bigr> \equiv  1 \bmod(2)$, and $\bigl< \mathbf{m}_j, \mathbf{m}_k \bigr> \equiv  1 \bmod(2)$ and we complete the proof.
\end{proof}

From this Theorem it follows the following meanders construction algorithm.

\begin{itemize}
  \item[\sf{REQUIRE}] an even number $N$, $S_1 = \{1,3,\ldots, N-1\}$, $S_0 = \{2,4, \ldots,N\}$;
  \item[\sf{ENSURE}] $n_1,\ldots, n_N \in S_1 \cup S_0$;
  \item[{\bf{1}}.] take a $(N+1) \times (N+1)$ tableau with empty cells and fill the main diagonal by zeros;
  \item[{\bf{2}}.] number strings from left to right, and number columns from top to bottom by numbers $0,1, \ldots, N$;
  \item[{\bf{3}}.] $\Delta$-fill string with number $0$;
  \item[{\bf{4}}.] choose a string with an odd number $n\in S_1$ and $\Delta$-fill it and column with the same number;
  \item[{\bf{5}}.] $i=1$;
  \item[{\bf{6}}.] {\sf{PRINT}} $n$;
  \item[{\bf{6}}.] $S_i: = S_i \setminus \{n\}$;
  \item[{\bf{7}}.] {\sf{IF}} $S_i = \varnothing$ {\sf{THEN GOTO}} {\bf{13}} {\sf{ELSE GOTO}} {\bf{8}};
  \item[{\bf{8}}] $i:=i+1 \bmod (2)$;

  \item[{\bf{9}}.] choose a string (a column) with number $m \in S_i$;
  \item[{\bf{10}}.] {\sf{IF}} the string (the column) can be $\Delta$-filled {\sf{THEN}} {\sf{GOTO}} {\bf{11}} {\sf{ELSE}} choose another $m' \in S_i\setminus\{m\}$ {\sf{GOTO}} {\bf{10}};
  \item[{\bf{11}}.] using (\ref{<i,j>1}), (\ref{<i,j>2}) get a system of equations for empty cells;

  \item[{\bf{12}}.] {\sf{IF}} the system can by solved {\sf{THEN GOTO}} {\bf{6}} {\sf{ELSE}} choose a string (column) with another number $m' \in S_i\setminus \{m\}$  {\sf{GOTO}} {\bf{10}};
  \item[{\bf{13}}.] {\sf{END}}
  \end{itemize}

\begin{example}
  Let us construct a $9\times 9$ meander matrix.
  
  \begin{itemize}
    \item[(0)] Take $9\times 9$ tableau and partially fill it as follows
     \begin{center}
         \begin{tabular}[t]{|c|c|c|c|c|c|c|c|c|c|}
         \hline
         & 0 & 1 &2 &3 &4 &5 & 6 & 7 & 8 \\
         \hline
         0 & $\mathbf{0}$ & 1 & 1 & 1 &1 &1 &1 & 1 & 1  \\
         \hline
        1 &1  & $\mathbf{0}$ & &  & & & && \\
        \hline
        2 & 1 && $\mathbf{0}$ & &&& && \\
        \hline
        3 & 1 &  &  & $\mathbf{0}$ &  & & &  &  \\
        \hline
        4 & 1& & && $\mathbf{0}$ && & &   \\
        \hline
        5 & 1&&&&& $\mathbf{0}$ & && \\
        \hline
        6 & 1&&&&&& $\mathbf{0}$ &&  \\
        \hline
        7 & 1&&&& &&& $\mathbf{0}$ &  \\
        \hline
        8 & 1&&&&&&&& $\mathbf{0}$  \\
        \hline
        \end{tabular}
         \end{center}
        
     \item[(1)] Choose a string and column $5$ and $\Delta$-fill them
     
     \begin{center}
         \begin{tabular}[t]{|c|c|c|c|c|c|c|c|c|c|}
         \hline
         & 0 & 1 &2 &3 &4 &5 & 6 & 7 & 8 \\
         \hline
         0 & $\mathbf{0}$ & 1 & 1 & 1 &1 &1 &1 & 1 & 1  \\
         \hline
         1 &1  & $\mathbf{0}$ & & &  & 0& & & \\
         \hline
         2 & 1 && $\mathbf{0}$ &  &  &0 & && \\
         \hline
         3 & 1 &  &  & $\mathbf{0}$ &  &0 & &  &  \\
         \hline
         4 & 1& & && $\mathbf{0}$ & 0 & & &   \\
         \hline
         5 & 1&0&0&0& 0 & $\mathbf{0}$ &1 &1&1 \\
         \hline
         6 & 1&&&&&1& $\mathbf{0}$ &&  \\
         \hline
         7 & 1&&&&&1&& $\mathbf{0}$ &  \\
         \hline
         8 & 1&&&&&1&&& $\mathbf{0}$  \\
         \hline
        \end{tabular}
         \end{center}
    By(\ref{<i,j>2}),
    \[
    \begin{cases}
       \bigl< 5, 6 \bigr> = 1 + (6,7) + (6,8) \equiv 1 \bmod(2),\\
       \bigl< 5, 7 \bigr> = 1 + (6,7) + (7,8) \equiv 1 \bmod(2),\\
       \bigl< 5, 8 \bigr> = 1 + (6,8) + (7,8) \equiv 1 \bmod(2),
    \end{cases}
    \]
    it implies that
     \[
     (6,7) = (6,8) = (7,8) = {a} \in \{0,1\},
   \]
    and we thus get
    \begin{center}
         \begin{tabular}[t]{|c|c|c|c|c|c|c|c|c|c|}
         \hline
         & 0 & 1 &2 &3 &4 &5 & 6 & 7 & 8 \\
         \hline
         0 & $\mathbf{0}$ & 1 & 1 & 1 &1 &1 &1 & 1 & 1  \\
         \hline
         1 &1  & $\mathbf{0}$ & & &  & 0& & & \\
         \hline
         2 & 1 && $\mathbf{0}$ &  &  &0 & && \\
         \hline
         3 & 1 &  &  & $\mathbf{0}$ &  &0 & &  &  \\
         \hline
         4 & 1& & && $\mathbf{0}$ & 0 & & &   \\
         \hline
         5 & 1&0&0&0& 0 & $\mathbf{0}$ &1 &1&1 \\
         \hline
         6 & 1&&&&&1& $\mathbf{0}$ & ${a}$ & ${a}$  \\
         \hline
         7 & 1&&&&&1& ${a}$ & $\mathbf{0}$ & ${a}$ \\
         \hline
         8 & 1&&&&&1& ${a}$ & ${a}$ & $\mathbf{0}$  \\
         \hline
        \end{tabular}
         \end{center}
    
    \item[(2)] We have to choose a string (column) with an even number.
     \begin{itemize}
       \item[(i)] Take a string (column) $2$, we then get
                 \[
                   (1,2) = 0, (2,3) = (2,4) =0, (2,6) = (2,7) = (2,8)=1,
                  \]
                 it follows that $\bigl<\mathsf{1,2} \bigr> \equiv 0 \bmod (2)$. We have
                 \begin{eqnarray*}
                 \bigl<\mathsf{1,2} \bigr> &=& 1 + (1,3) + (1,4) \\
                 &&+ (1,6)+ (1,7) + (1,8) \equiv 0 \bmod (2),
                 \end{eqnarray*}
                 and we do not get any contradiction, hence this string may be chosen.
      \item[(ii)] Similarly, by the straightforward verification, one can easy verify that
                  strings $4$, $6$ can be chosen.
      \item[(iii)] Take string $8$. We get
                 \[
                 (1,8) = (2,8) = (3,8) = (4,8) = 0= {a} = 0,
                \]
                and hence $\bigl<\mathsf{1,8} \bigr> \equiv 0 \bmod(2)$, but the tableau  implies that $\bigl<\mathsf{1,8} \bigr> = 1$. Therefore this string cannot be chosen in this step.
      \end{itemize}
    
      \item[(3)] Chose string $2$. We then get
      \begin{center}
         \begin{tabular}[t]{|c|c|c|c|c|c|c|c|c|c|}
         \hline
         & 0 & 1 &2 &3 &4 &5 & 6 & 7 & 8 \\
         \hline
         0 & $\mathbf{0}$ & 1 & 1 & 1 &1 &1 &1 & 1 & 1  \\
         \hline
         1 &1  & $\mathbf{0}$ & 0 & &  & 0& & & \\
         \hline
         2 & 1 &0& $\mathbf{0}$ & 1 & 1 &0 &1 &1&1 \\
         \hline
         3 & 1 &  & 1 & $\mathbf{0}$ &  &0 & &  &  \\
         \hline
         4 & 1& &1 && $\mathbf{0}$ & 0 & & &   \\
         \hline
         5 & 1&0&0&0& 0 & $\mathbf{0}$ &1 &1&1 \\
         \hline
         6 & 1&&1&&&1& $\mathbf{0}$ & ${a}$ & ${a}$  \\
         \hline
         7 & 1&&1&&&1& ${a}$ & $\mathbf{0}$ & ${a}$ \\
         \hline
         8 & 1&&1&&&1& ${a}$ & ${a}$ & $\mathbf{0}$  \\
         \hline
        \end{tabular}
    \end{center}
    by (\ref{<i,j>2}),
    \[
     \begin{cases}
     \bigl< \mathsf{2,3} \bigr> = 1 + (3,4) + (3,6) + (3,7) + (3,8) \equiv 1 \bmod(2),\\
     \bigl< \mathsf{2,4} \bigr> = 1 + (3,4) + (4,6) + (4,7) + (4,8) \equiv 1 \bmod(2),\\
     \bigl< \mathsf{2,6} \bigr> = 1 + (3,6) + (4,6) + {a} + {a} \equiv 1 \bmod(2),\\
     \bigl< \mathsf{2,7} \bigr> = 1 + (3,7) + (4,7) + {a} + {a} \equiv 1 \bmod(2),\\
     \bigl< \mathsf{2,8} \bigr> = 1 + (3,8) + (4,8) + {a} + {a} \equiv 1 \bmod(2),
     \end{cases}
    \]
    and we then obtain
    \begin{align*}
      & (3,6)  = (4,6) = b \in \{0,1\} \\
      & (3,7)  = (4,7) = c \in \{0,1\} \\
      & (3,8)  = (4,8) = d \in \{0,1\} \\
      & (3,4)  = b+c+d \bmod(2).
    \end{align*}
    
    Next, using the condition of Theorem \ref{Gauss->matrix}, $\bigl< \mathsf{3,3} \bigr> \equiv 0 \bmod(2)$, we get $(1,3) = 0$. Similarly, one can get $(1,4) = 0$. Using
    \[
     \bigl< \mathsf{6,6} \bigr> \equiv \bigl< \mathsf{7,7} \bigr> \equiv \bigl< \mathsf{8,8} \bigr> \equiv 0 \bmod(2)
    \]
    we get $(1,6) = (1,7) = (1,8) = 1$.
    
    We thus have
    \begin{center}
         \begin{tabular}{|c| c | c |c|c|c|c|c|c|c|}
         \hline
         & $\quad 0 \quad $ & $\quad 1 \quad $ & $\quad 2 \quad $ & $\quad 3 \quad $ &  $\quad 4 \quad $ & $\quad 5 \quad $ & $\quad 6 \quad $ & $\quad 7 \quad $ & $\quad 8 \quad $ \\
         \hline
         0 & $\mathbf{0}$ & 1 & 1 & 1 &1 &1 &1 & 1 & 1  \\
         \hline
         1 &1  & $\mathbf{0}$ & 0 & 0 & 0 & 0&1 &1 &1 \\
         \hline
         2 & 1 &0& $\mathbf{0}$ & 1 & 1 &0 &1 &1&1 \\
         \hline
         3 & 1 & 0 & 1 & $\mathbf{0}$ &  &0 & $b$ & $c$  & $d$ \\
         \hline
         4 & 1& 0 & 1 & & $\mathbf{0}$ & 0 & $b$ & $c$ & $d$  \\
         \hline
         5 & 1&0&0&0& 0 & $\mathbf{0}$ &1 &1&1 \\
         \hline
         6 & 1 & 1 &1& $b$ & $b$ &1& $\mathbf{0}$ & ${a}$ & ${a}$  \\
         \hline
         7 & 1& 1 &1& $c$ & $c$ &1& ${a}$ & $\mathbf{0}$ & ${a}$ \\
         \hline
         8 & 1& 1 &1& $d$ & $d$ &1& ${a}$ & ${a}$ & $\mathbf{0}$  \\
         \hline
        \end{tabular}
    \end{center}
    
    \item[(4)] We have to chose a string with an odd number $1,3$ or $7$. Since string $4$ has not been chosen and $(1,4)=0$ then string $1$ cannot be $\Delta$-filled.
        
        Next, if we chose string $7$ we then get $(6,7) = 0$, $(7,8) = 1$, but $(6,7) = (7,8) = a$, we then get a contradiction. Further, one can easy verify that string $3$ can be chosen.
    
    \item[(5)] Take string $3$. It follows that $b=c=d=1$ and we then get
    \begin{center}
         \begin{tabular}{|c| c | c |c|c|c|c|c|c|c|}
         \hline
         & $\quad 0 \quad $ & $\quad 1 \quad $ & $\quad 2 \quad $ & $\quad 3 \quad $ &  $\quad 4 \quad $ & $\quad 5 \quad $ & $\quad 6 \quad $ & $\quad 7 \quad $ & $\quad 8 \quad $ \\
         \hline
         0 & $\mathbf{0}$ & 1 & 1 & 1 &1 &1 &1 & 1 & 1  \\
         \hline
         1 &1  & $\mathbf{0}$ & 0 & 0 & 0 & 0&1 &1 &1 \\
         \hline
         2 & 1 &0& $\mathbf{0}$ & 1 & 1 &0 &1 &1&1 \\
         \hline
         3 & 1 & 0 & 1 & $\mathbf{0}$ & 1 &0 & $1$ & $1$  & $1$ \\
         \hline
         4 & 1& 0 & 1 & 1 & $\mathbf{0}$ & 0 & $1$ & $1$ & $1$  \\
         \hline
         5 & 1&0&0&0& 0 & $\mathbf{0}$ &1 &1&1 \\
         \hline
         6 & 1 & 1 &1& $1$ & $1$ &1& $\mathbf{0}$ & ${a}$ & ${a}$  \\
         \hline
         7 & 1& 1 &1& $1$ & $1$ &1& ${a}$ & $\mathbf{0}$ & ${a}$ \\
         \hline
         8 & 1& 1 &1& $1$ & $1$ &1& ${a}$ & ${a}$ & $\mathbf{0}$  \\
         \hline
        \end{tabular}
    \end{center}
        
        We see that strings $1,4$ are automatically $\Delta$-filled.
        
        So, we have two possibilities: 1) $a=0$, and 2) $a = 1$. These cases correspondence to the following possibilities for our meander (see {\sc Figure} \ref{exmeandr}).
      \end{itemize}
  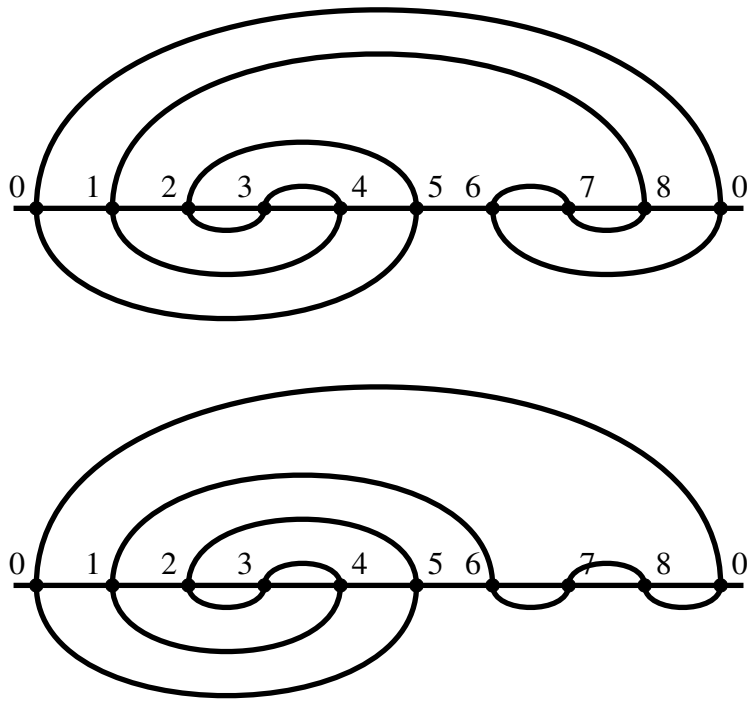
\begin{figure}[h!]
  \begin{tikzpicture}
    \draw [line width = 2] (-0.3,0) to  (9.3,0);
    \draw [fill](0,0) circle (2.5pt);
    \draw [fill](1,0) circle (2.5pt);
    \draw [fill](2,0) circle (2.5pt);
    \draw [fill](3,0) circle (2.5pt);
    \draw [fill](4,0) circle (2.5pt);
    \draw [fill](5,0) circle (2.5pt);
    \draw [fill](6,0) circle (2.5pt);
    \draw [fill](7,0) circle (2.5pt);
    \draw [fill](8,0) circle (2.5pt);
    \draw [fill](9,0) circle (2.5pt);
    \draw [line width = 2] (0,0) to [out = 270, in = 270] (5,0);
    \draw [line width = 2] (5,0) to [out = 90, in = 90] (2,0);
    \draw [line width = 2] (2,0) to [out = 270, in = 270] (3,0);
    \draw [line width = 2] (3,0) to [out = 90, in = 90] (4,0);
    \draw [line width = 2] (4,0) to [out = 270, in = 270] (1,0);
    \draw [line width = 2] (1,0) to [out = 90, in = 90] (8,0);
    \draw [line width = 2] (8,0) to [out = 270, in = 270] (7,0);
    \draw [line width = 2] (7,0) to [out = 90, in = 90] (6,0);
    \draw [line width = 2] (6,0) to [out = 270, in = 270] (9,0);
    \draw [line width = 2] (9,0) to [out = 90, in = 90] (0,0);
    \node[above left] at (0,0){$0$};
    \node[above left] at (1,0){$1$};
    \node[above left] at (2,0){$2$};
    \node[above left] at (3,0){$3$};
    \node[above right] at (4,0){$4$};
    \node[above right] at (5,0){$5$};
    \node[above left] at (6,0){$6$};
    \node[above right] at (7,0){$7$};
    \node[above right] at (8,0){$8$};
    \node[above right] at (9,0){$0$};
\begin{scope}[yshift = -5cm]
\draw [line width = 2] (-0.3,0) to  (9.3,0);
    \draw [fill](0,0) circle (2.5pt);
    \draw [fill](1,0) circle (2.5pt);
    \draw [fill](2,0) circle (2.5pt);
    \draw [fill](3,0) circle (2.5pt);
    \draw [fill](4,0) circle (2.5pt);
    \draw [fill](5,0) circle (2.5pt);
    \draw [fill](6,0) circle (2.5pt);
    \draw [fill](7,0) circle (2.5pt);
    \draw [fill](8,0) circle (2.5pt);
    \draw [fill](9,0) circle (2.5pt);
    \draw [line width = 2] (0,0) to [out = 270, in = 270] (5,0);
    \draw [line width = 2] (5,0) to [out = 90, in = 90] (2,0);
    \draw [line width = 2] (2,0) to [out = 270, in = 270] (3,0);
    \draw [line width = 2] (3,0) to [out = 90, in = 90] (4,0);
    \draw [line width = 2] (4,0) to [out = 270, in = 270] (1,0);
    \draw [line width = 2] (1,0) to [out = 90, in = 90] (6,0);
    \draw [line width = 2] (6,0) to [out = 270, in = 270] (7,0);
    \draw [line width = 2] (7,0) to [out = 90, in = 90] (8,0);
    \draw [line width = 2] (8,0) to [out = 270, in = 270] (9,0);
    \draw [line width = 2] (9,0) to [out = 90, in = 90] (0,0);
    \node[above left] at (0,0){$0$};
    \node[above left] at (1,0){$1$};
    \node[above left] at (2,0){$2$};
    \node[above left] at (3,0){$3$};
    \node[above right] at (4,0){$4$};
    \node[above right] at (5,0){$5$};
    \node[above left] at (6,0){$6$};
    \node[above right] at (7,0){$7$};
    \node[above right] at (8,0){$8$};
    \node[above right] at (9,0){$0$};
   \end{scope}
  \end{tikzpicture}
  \caption{The meanders which are correspondence to case $a=0$ (in the top) and $a=1$ (in the bottom).}\label{exmeandr}
\end{figure}

\end{example}

\newpage

\end{document}